\documentclass[12pt,A4]{article}
\usepackage[applemac]{inputenc}
\usepackage[english]{babel}
\usepackage{amsmath,amsfonts,amssymb,amsthm,mathrsfs,bbm,mathtools}
\usepackage[font=sf, labelfont={sf,bf}, margin=1cm]{caption}
\usepackage{graphicx,graphics}
\usepackage{epsfig}
\usepackage{latexsym}
 \usepackage{ae,aecompl}
\usepackage{pstricks}
\usepackage{enumerate}
\usepackage{xcolor}
\usepackage{pifont}
\usepackage[pdfpagemode=UseNone,bookmarksopen=false,colorlinks=true,urlcolor=blue,citecolor=blue,citebordercolor=blue,linkcolor=blue]{hyperref}
\pagestyle{headings}
\usepackage[top=2.5cm,left=1.5cm,right=1.5cm,bottom=2.5cm]{geometry}
\usepackage{comment}
\usepackage{mathpazo} 
\usepackage{eulervm}
\DeclareGraphicsExtensions{.jpg,.pdf}
\usepackage[normalem]{ulem}
\normalem
\usepackage{enumerate,framed}

\linespread{1.15}

\newtheorem{thm}{Theorem}
\newtheorem{lem}{Lemma}[section]
\newtheorem{prop}[lem]{Proposition}
\newtheorem{cor}[lem]{Corollary}

\theoremstyle{definition}

\newtheorem{rem}[lem]{Remark}

\newtheorem{ques}[lem]{Question}

\newcommand{\E}[1]{\mathbb{E}  [ #1  ]}
\newcommand{\R}{\mathbb R}
\newcommand{\N}{\mathbb N}
\newcommand{\Z}{\mathbb Z}

\newcommand{\D}{\mathbb D}
\def \P {\mathbb{P}}

\def\dsk{d_{\mathrm{SK}}}

\newcommand\Es[1]{\mathbb{E}\left[#1\right]}
\renewcommand\Pr[1]{\mathbb{P}\left(#1\right)}
\newcommand \fl[1] {\left\lfloor #1 \right\rfloor}
\newcommand \ce[1] {\left\lceil #1 \right\rceil}

\newcommand{\norme}[1]{\left\Vert #1\right\Vert _ \infty}


\title{  \vspace {-2.2cm}\textbf{Self-similar scaling limits of Markov chains on the positive integers}}
\date{}
\author{}

\DeclareSymbolFont{extraup}{U}{zavm}{m}{n}
\DeclareMathSymbol{\varheart}{\mathalpha}{extraup}{86}
\DeclareMathSymbol{\vardiamond}{\mathalpha}{extraup}{87}

\makeatletter
\renewcommand*{\@fnsymbol}[1]{\ensuremath{\ifcase#1\or  \spadesuit \or \varheart\or \vardiamond \or \clubsuit \or
   \mathsection\or \mathparagraph\or \|\or **\or \dagger\dagger
   \or \ddagger\ddagger \else\@ctrerr\fi}}
\makeatother

\author{Jean Bertoin\thanks{Universit\"at Z\"urich. \hfill  \texttt{jean.bertoin@math.uzh.ch}} 
\qquad \& \qquad Igor Kortchemski\thanks{Universit\"at Z\"urich.\hfill  \texttt{igor.kortchemski@normalesup.org}} 
}

\begin{document}

\maketitle

\let\thefootnote\relax\footnotetext{ \\\emph{MSC2010 subject classifications}. Primary 60F17,60J10,60G18; secondary 60J35. \\
 \emph{Keywords and phrases.} Markov chains, Self-similar Markov processes, L\'evy processes, Invariance principles.}
 
\vspace {-1cm}

\begin{abstract} 
We are interested in the asymptotic behavior of Markov chains on the set of positive integers for which, loosely speaking, large jumps are rare and occur at a rate that behaves like a negative power of the current state, and such that  small positive and negative steps of the chain roughly compensate each other. If $X_{n}$ is such a Markov chain started at $n$, we establish a limit theorem for $\frac{1}{n}X_{n}$ appropriately scaled in time, where the scaling limit is given by a nonnegative self-similar Markov process. We also study the asymptotic behavior of the time needed by $X_{n}$ to reach some fixed finite set. We identify three different regimes  (roughly speaking the transient, the recurrent and the positive-recurrent regimes) in which $X_{n}$ exhibits different behavior. The present results extend those of Haas \& Miermont \cite {HM11} who focused on the case of non-increasing Markov chains. We further present a number of applications to the study of Markov chains with asymptotically zero drifts such as Bessel-type random walks, nonnegative self-similar Markov processes, invariance principles for random walks conditioned to stay positive, and exchangeable coalescence-fragmentation processes.
\end{abstract}

 \vfill

 \begin{figure}[!h]
 \begin{center}
    \includegraphics[width=0.3 \linewidth]{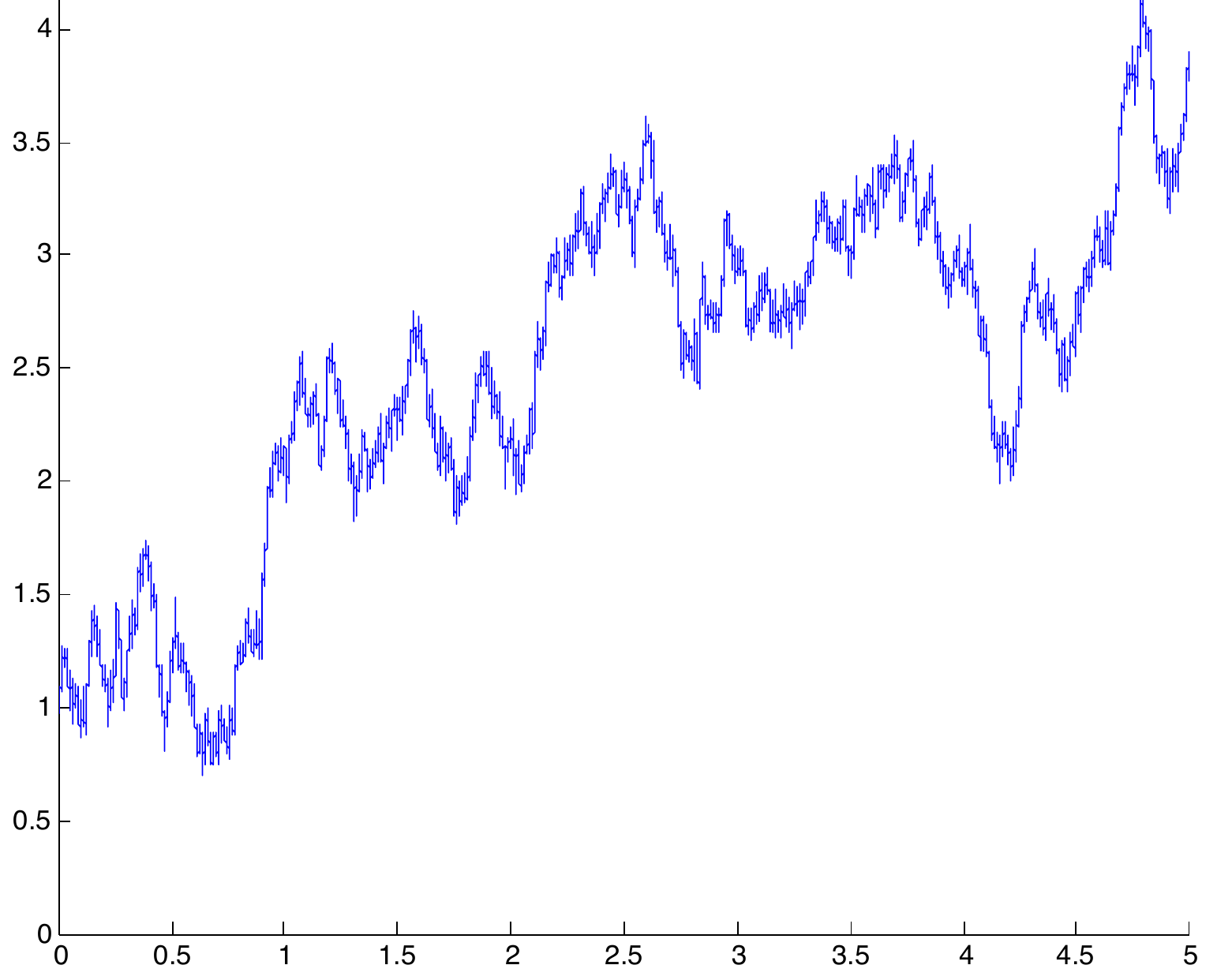}\hfill
  \includegraphics[width=0.3 \linewidth]{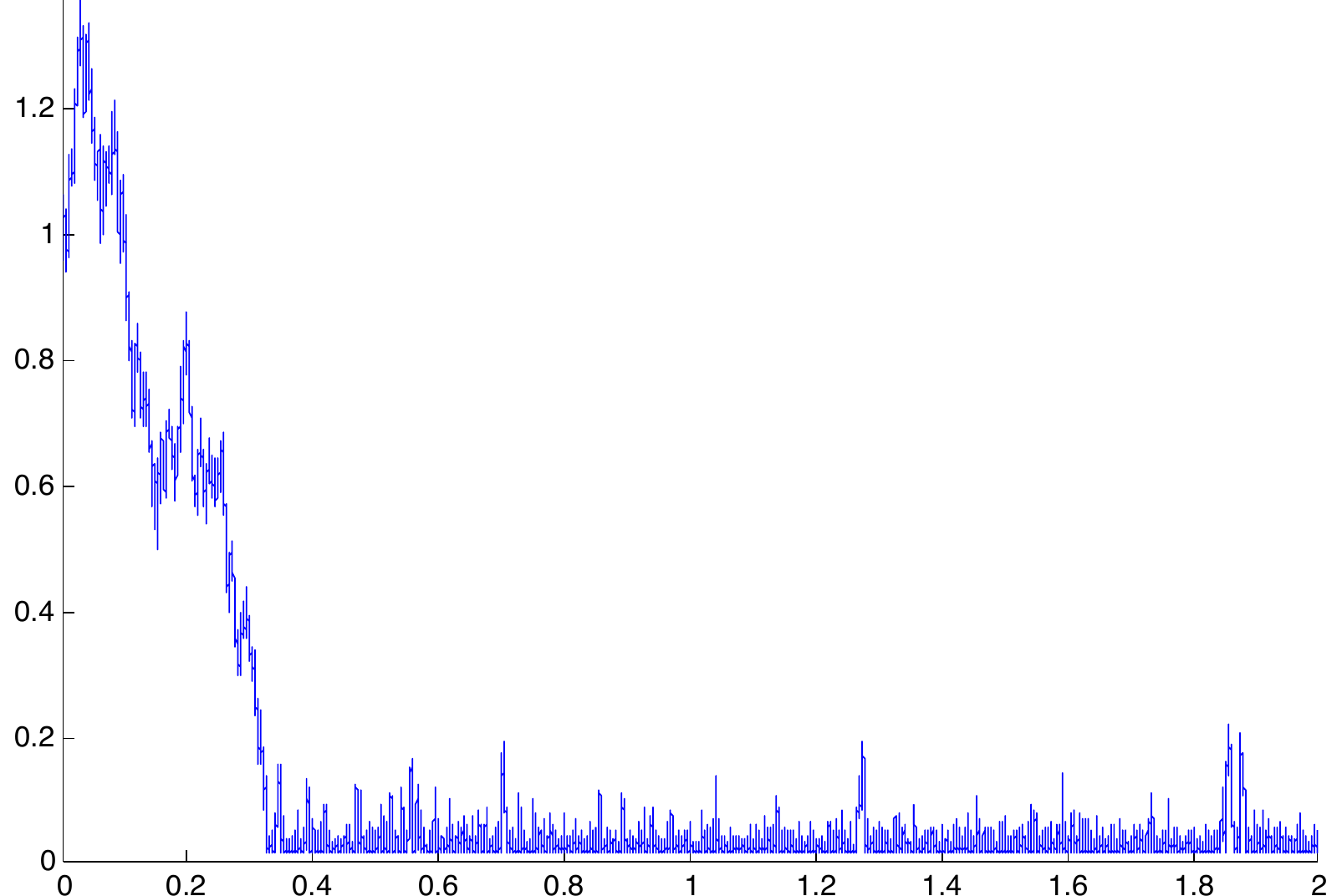}\hfill
\includegraphics[width=0.3 \linewidth]{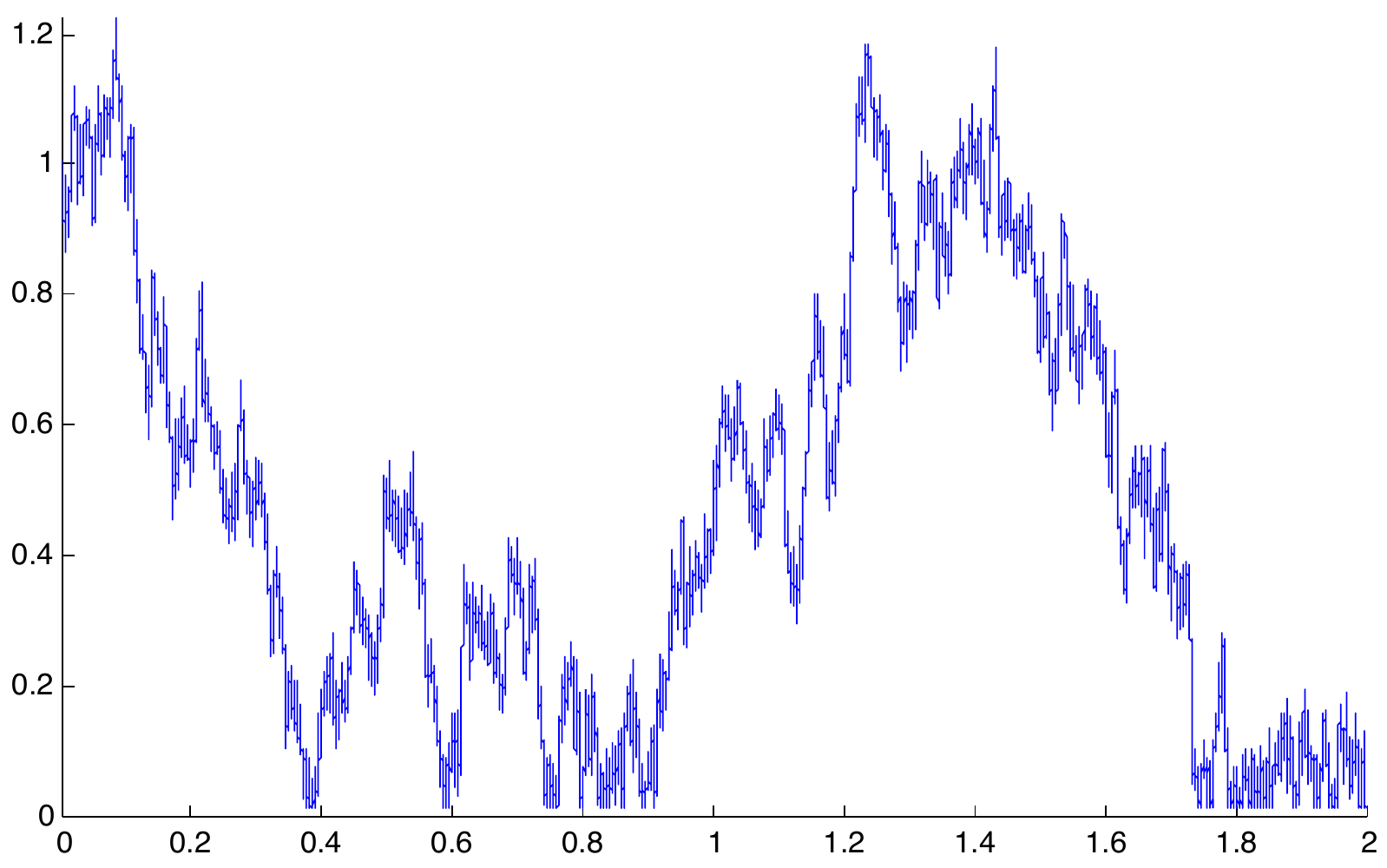}
 \caption{ \label{fig:MC} Three different asymptotic regimes of the Markov chain $X_{n}/n$ started at $n$ as $n \rightarrow \infty$: with probability tending to one as $n \rightarrow \infty$, in the first case, the chain never reaches the boundary {(transient case)}; in the second case $X_{n}$ reaches the boundary and then stays within its vicinity  {on long time scales} {(positive recurrent case)}, and in the last case $X_{n}$ visits the boundary infinitely many times and makes some macroscopic excursions in between (null-recurrent case).}
 \end{center}
 \end{figure}
 
 \vfill
 
 \section{Introduction}

In short, the purpose of this work is to provide explicit criteria for the functional weak convergence of properly rescaled Markov chains on $\N=\{1,2,\ldots\}$.
Since it is well-known from the  work of Lamperti \cite{Lam62a} that self-similar processes arise as the scaling limit of general stochastic processes, and since in the case of Markov chains, one naturally expects  the Markov property to be preserved after convergence, scaling limits of rescaled Markov chains on $\N$ should thus belong to the class of 
self-similar Markov processes on $[0,\infty)$. The latter have been also introduced by Lamperti \cite{Lam72}, who pointed out a remarkable connexion with real-valued L\'evy processes which we shall recall later on. 
Considering the powerful arsenal of techniques which are nowadays available for establishing convergence in distribution for sequences of Markov processes (see in particular Ethier \& Kurtz \cite{EK86} and Jacod \& Shiryaev \cite{JS03}), it seems that the study of scaling limits of general Markov chains on $\N$ should be part of the folklore. Roughly speaking, it is well-known that weak convergence of Feller processes amounts to the convergence of infinitesimal generators (in some appropriate sense), and  the path should thus be essentially well-paved. 

However, there is a major obstacle for this natural approach. Namely, there is a delicate issue regarding the boundary of self-similar Markov processes on $[0,\infty)$: in some cases, $0$ is an absorbing boundary, in some other, $0$ is an entrance boundary, and further $0$ can also be a reflecting boundary, where the reflection can be either continuous or by a jump. See \cite{BS11, CKPR12, Fit06, Riv05, Riv07} and the references therein. Analytically, this raises the questions of identifying a core for a self-similar Markov process on $[0,\infty)$
and of determining its infinitesimal generator on this core,  in particular on the neighborhood of  the boundary point $0$ where a singularity appears. To the best of our knowledge, these questions remain open in general, and investigating the asymptotic behavior of a sequence of infinitesimal generators at a singular point therefore seems rather subtle.

A few years ago, Haas \& Miermont \cite{HM11} obtained a general scaling limit theorem for non-increasing Markov chains on $\N$ (observe that plainly, $1$ is always an absorbing boundary for non-increasing self-similar Markov processes), and the purpose of the present work is to extend their result by removing the non-increase assumption. Our approach bears similarities with that developed by Haas \& Miermont, but also with some differences. In short, Haas and Miermont first established a tightness result, and then analyzed weak limits of convergent subsequences via martingale problems, whereas we rather investigate asymptotics of infinitesimal generators. 

More precisely, in order to circumvent the crucial difficulty related to the boundary point $0$, we shall not directly study the rescaled version of the Markov chain, but rather of a time-changed version. The time-substitution is chosen so to yield weak convergence  towards the exponential of a L\'evy process, where the convergence is established through the analysis of infinitesimal generators. The upshot is that cores and infinitesimal generators are much better understood for L\'evy processes and their exponentials than for self-similar Markov processes, and  boundaries yield no difficulty. We are then left with the inversion of the time-substitution, and this turns out to be closely related to the Lamperti transformation. However, although  our approach enables us to treat the situation when the Markov chain is either absorbed at the boundary point $1$ or eventually escapes to $+\infty$, it does not seem to provide direct access to the case when the limiting process is reflected at the boundary (see Figure \ref{fig:MC}).

The rest of this work is organized as follows. Our general results are presented in Section \ref {sec:description}. 
We state three main limit theorems, namely Theorems \ref{thm:main1}, \ref{thm:main2} and \ref{thm:main3}, each being valid under some specific set of assumptions. Roughly speaking, Theorem \ref{thm:main1} treats the situation where the Markov chain is transient and thus escapes to $+\infty$, whereas Theorem \ref{thm:main2} deals with the recurrent case. In the latter, we only consider the Markov chain until its first entrance time in some finite set, which forces absorption at the boundary point $0$ for the scaling limit. Theorem \ref{thm:main3} is concerned with the situation where the Markov chain is positive recurrent; then convergence of the properly rescaled chain to a self-similar Markov process absorbed at $0$ is established, even though the Markov chain is no longer trapped in some finite set.  
Finally, we also provide a weak limit theorem (Theorem \ref{thm:absorption})  in the recurrent situation  for the first instant when the Markov chain started from a large level enters some fixed finite set. 
Section \ref {sec:aux} prepares the proofs of the preceding results, by focusing on an auxiliary continuous-time Markov chain which is both closely related to the genuine discrete-time Markov chain and easier to study. The connexion between the two relies on a Lamperti-type transformation. The proofs of the statements made in Section \ref {sec:description} are then given in Section \ref {sec:scaling} by analyzing the time-substitution; classical arguments relying on the celebrated Foster criterion for recurrence of Markov chains also play a crucial role.
We illustrate  our general results in Section \ref {sec:applications}. First, we check that they encompass those of Haas \& Miermont in the case where the chain is non-increasing. Then we derive functional limit theorems for Markov chains with asymptotically zero drift (this includes the so-called Bessel-type random walks which have been considered by many authors in the literature), scaling limits are then given in terms of Bessel processes.  Lastly, we derive a weak limit theorem for the number of particles in a fragmentation-coagulation process, of a type similar to that introduced by J. Berestycki \cite{Ber04}. Finally, in Section 6, we point at a series of open questions related to this work.

We conclude this Introduction by mentioning that our initial motivation for establishing such scaling limits for Markov chains on $\N$ was a question raised by Nicolas Curien concerning the study of random planar triangulations and their connexions with compensated fragmentations which has been developed in a subsequent work \cite{BCK15}.

\paragraph {Acknowledgments.} We thank an anonymous referee and Vitali Wachtel for several useful comments. I.K. would also like to thank Leif
D\"oring for stimulating discussions.

\section {Description of the main results}
 \label {sec:description}
 
For every integer $n \geq 1$, let $(p_{n,k} ; k \geq 1)$ be a sequence of non-negative real numbers such that $ \sum_{k \geq 1} p_{n,k}=1$, and let $(X_{n}(k) ; k \geq 0)$ be the discrete-time homogeneous Markov chain started at state $n$ such that the probability transition from state $i$ to state $j$ is $p_{i,j}$ for $i,j \in \N$. Specifically, $X_{n}(0)=n$, and  $ \Pr {X_{n}(k+1)=j \ | \ X_{n}(k)=i}=p_{i,j}$ for every $i,j \geq 1$ and $k \geq 0$. 
Under certain assumptions on the probability transitions, we establish (Theorems \ref {thm:main1}, \ref {thm:main2} and \ref {thm:main3} below) a functional invariance principle for $ \frac{1}{n}{X} _{n}$, appropriately scaled in time, to a nonnegative self-similar Markov process in the Skorokhod topology for c\`adl\`ag functions. In order to state our results, we first need to formulate the main assumptions.

\subsection {Main Assumptions}
\label {sec:main}

For $n \geq 1$, denote by $ \Pi_{n}^{\ast}$ the probability measure on $ \R$ defined  by
$$ \Pi_{n}^{\ast}(dx)= \sum_{k \geq 1}  p_{n,k} \cdot \delta_{\ln(k)-\ln(n)}(dx),$$ 
which is the law of $ \ln(X_{n}(1)/n)$. 
Let $(a_{n})_{n \geq 0}$ be a sequence of positive real numbers with regular variation of index $ \gamma > 0$, meaning that $a_{\lfloor xn\rfloor}/a_{n} \rightarrow x^{\gamma}$ as $n \rightarrow \infty$ for every fixed $ x>0$, where $ \fl {x}$ stands for the integer part of a real number $x$. Let $ \Pi$ be a measure on $ \R \backslash \{ 0\}$ such that  $ \Pi( \{ -1,1\})=0$ and 
\begin{equation}
\label{eq:condPi} \int_{- \infty}^{\infty} (1 \wedge x^{2})  \ \Pi(dx) < \infty.
\end{equation}
We require that $ \Pi( \{ -1,1\})=0$ for the sake of simplicity only, and it would be possible to treat the general case with mild modifications which are left to the reader. We also mention that some of our results could be extended to the case where $ \gamma=0$ and $a_{n} \rightarrow \infty$, but we shall not pursue this goal here.
Finally, denote by $ \overline{\R}=[-\infty,\infty]$ the extended real line. 

We now introduce our main assumptions:

\begin{framed}\textbf{(A1).} As $n \rightarrow \infty$, we have the following vague convergence of measures on $ \overline {\R} \backslash \{ 0\}$:
$$a_{n} \cdot \Pi^{\ast}_{n}(dx)  \quad\mathop{\longrightarrow}^{(v)}_{n \rightarrow \infty} \quad  \Pi(dx).$$\end {framed}
Or, in other words, we assume that
$$ a_{n} \cdot  \Es{f \left( \frac{X_n(1)}{n} \right) }  \quad\mathop{\longrightarrow}_{n \rightarrow \infty} \quad \int_{ \R} f(e^x) \ \Pi(dx)$$ for every continuous function $f$ with compact support in $ [0,\infty]\backslash \{ 1\}$.

\begin{framed}
\textbf{(A2).} The following two convergences holds:
$$ a_{n} \cdot  \int_{-1}^{1}x \ \Pi_{n}^{\ast}(dx)  \quad\mathop{\longrightarrow}_{n \rightarrow \infty} \quad b, \qquad a_{n} \cdot \int_{-1}^{1}x^{2} \ \Pi_{n}^{\ast}(dx)  \quad\mathop{\longrightarrow}_{n \rightarrow \infty} \quad \sigma^{2}+ \int_{-1}^{1} x^{2} \ \Pi(dx),$$
for some $ b \in \R $ and $ \sigma ^{2} \geq 0$.
\end {framed}

It is important to note that under \textbf{(A1)}, we may have $ \int_{-1}^{1}|x| \ \Pi(dx) = \infty$, in which case \textbf{(A2)} requires small positive and negative steps of the chain to roughly compensate each other. 

\subsection {Description of the distributional limit}
\label {sec:descr}

We now introduce several additional tools in order to describe the scaling limit of the Markov chain $X_{n}$. Let  $ (\xi(t))_{t \geq 0}$ be a L\'evy process with characteristic  exponent
given by the L\'evy--Khintchine formula 
$$ \Phi(\lambda)=  -\frac{1}{2} \sigma^{2} \lambda^{2} +  ib \lambda+ \int_{- \infty}^{\infty} \left( e^{i\lambda x}-1-i \lambda x \mathbbm {1}_{|x| \leq 1} \right)  \ \Pi(dx), \qquad \lambda \in \R.$$
Specifically, there is the identity $ \Es {e^{ i \lambda \xi(t)}}=e^{t \Phi( \lambda)}$ for $ t \geq 0, \lambda \in \R$. Then set $$I_{ \infty}= \int_{0}^{\infty} e^{ \gamma \xi(s)} \ ds \quad \in \quad (0, \infty].$$
It is known that $I_{\infty}<\infty$ a.s.~if  $\xi$ drifts to $- \infty$ (i.e.~$\lim_{t\to \infty} \xi(t)=-\infty$ a.s.), and 
$ I_{ \infty}= \infty$ a.s.~if $ \xi$ drifts to $+ \infty$ or oscillates (see e.g.~\cite[Theorem 1]{BY05} which also gives necessary and sufficient conditions involving $ \Pi$). Then for every $t \geq 0$, set
$$\tau(t)= \inf  \left\{ u \geq 0 ; \int_{0}^{u}  e^{ \gamma \xi(s)}ds>t \right\} $$
with the usual convention  $ \inf \emptyset = \infty$. Finally, define the Lamperti transform  \cite {Lam72} of $ \xi$ by
$$Y(t)= e^{ \xi(\tau(t))}  \quad \textrm { for }  \quad 0 \leq t <  I_{ \infty}, \qquad  \qquad Y(t)= 0 \quad \textrm { for }  \quad t \geq I_{\infty}.$$
In view of the preceding observations, $Y$ hits $0$ in finite time almost surely if, and only if, $ \xi$ drifts to $- \infty$.

By construction, the process $Y$ is a self-similar Markov process of index $1/ \gamma$ started at $1$. Recall that if $ \P_{x}$ is the law of a nonnegative Markov process $(M_{t})_{t \geq 0}$ started at $x \geq 0$, then $M$ is self-similar with index $ \alpha>0$ if the law of $(r^{-\alpha} M_{rt})_{t \geq 0}$ under $ \P_{x}$ is $\P_{r^{-\alpha}x}$ for every $r>0$ and $x \geq 0$. Lamperti \cite {Lam72} introduced and studied nonnegative self-similar Markov processes and established that, conversely, any self-similar Markov process which either never reaches the boundary states $0$ and $ \infty$, or reaches them continuously (in other words, there is no killing inside $(0, \infty$)) can be constructed by using the previous transformation.

\subsection {Invariance principle for $X_{n}$}

We are now ready to state our first main result, which is a limit theorem in distribution in the space of real-valued c\`adl\`ag functions $ \D(\R_{+},\R)$ on $ \R_{+}$ equipped with the $J_{1}$-Skorokhod topology (we refer to \cite[Chapter VI]{JS03} for background on the Skorokhod topology).

\begin {thm}[Transient case]\label {thm:main1} Assume that $ \textbf{(A1)}$ and \textbf{(A2)} hold, and that the L\'evy process $ \xi$ does not drift to $- \infty$. Then the convergence
\begin{equation}
\label{eq:cvthm1} \left( \frac{ {X} _{n}( \lfloor a_{n} t \rfloor)}{n} ; t \geq 0 \right)   \quad\mathop{\longrightarrow}^{(d)}_{n \rightarrow \infty} \quad (Y(t); t\geq 0)
\end{equation}
holds in distribution  in $ \D(\R_{+},\R)$.
\end {thm}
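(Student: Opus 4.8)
The plan is to follow the route announced in the Introduction: pass through a continuous-time auxiliary chain whose scaling limit is the exponential of $\xi$, and then recover $Y$ by inverting a Lamperti-type time substitution. First I would introduce the continuous-time Markov chain $V$ on $\N$ that shares its jump chain with $X_{n}$ but waits at state $i$ an independent exponential time of rate $a_{i}$; equivalently, its generator is $\mathcal{A}f(i)=a_{i}\sum_{k\ge 1}p_{i,k}\bigl(f(k)-f(i)\bigr)$. Starting $V$ from $n$ and passing to logarithms, set $W_{n}(t)=\ln\bigl(V(t)/n\bigr)$, a Markov process on the grid $\ln(\N)-\ln(n)$ started at $0$, whose generator acts on a smooth test function $g$ by $\mathcal{A}_{n}g(y)=a_{i}\int_{\R}\bigl(g(y+x)-g(y)\bigr)\,\Pi^{\ast}_{i}(dx)$ with $i=\fl{ne^{y}}$. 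The first and central step is to prove that $W_{n}\Rightarrow\xi$ in $\D(\R_{+},\R)$, equivalently $V(\cdot)/n\Rightarrow e^{\xi}$.

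This is exactly where Assumptions \textbf{(A1)} and \textbf{(A2)} enter. Splitting the integral defining $\mathcal{A}_{n}g(y)$ into small jumps ($|x|\le 1$) and large jumps ($|x|>1$), and Taylor-expanding $g$ to second order on the former, one sees that the large-jump intensity $a_{i}\Pi^{\ast}_{i}$ converges to $\Pi$ by \textbf{(A1)} (with $i=\fl{ne^{y}}\to\infty$), while the truncated first and second moments converge by \textbf{(A2)} to the drift $b$ and to $\sigma^{2}+\int_{-1}^{1}x^{2}\,\Pi(dx)$. These are precisely the L\'evy--Khintchine data of $\Phi$, so $\mathcal{A}_{n}g\to\mathcal{G}g$, where $\mathcal{G}$ is the generator of $\xi$. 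I would then conclude $W_{n}\Rightarrow\xi$ either by verifying convergence of the semimartingale characteristics (Jacod--Shiryaev, convergence to a process with independent increments), for which \textbf{(A1)}--\textbf{(A2)} are tailor-made, or through a core/Trotter--Kurtz argument à la Ethier--Kurtz once a convenient core for $\mathcal{G}$ is fixed.

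Second, I would upgrade this to a joint statement involving the clock counting discrete steps. Writing $N(t)$ for the number of jumps of $V$ on $[0,t]$, the process $N(t)-\int_{0}^{t}a_{V(s)}\,ds$ is a martingale; dividing by $a_{n}$ and using $a_{V(s)}/a_{n}=a_{\fl{ne^{W_{n}(s)}}}/a_{n}\to e^{\gamma\xi(s)}$, via the uniform convergence theorem for regularly varying sequences, yields $N(\cdot)/a_{n}\Rightarrow T(\cdot)$ with $T(t)=\int_{0}^{t}e^{\gamma\xi(s)}\,ds$, jointly with $V(\cdot)/n\Rightarrow e^{\xi}$. Since $X_{n}(k)=V\bigl(N^{-1}(k)\bigr)$, we have $X_{n}(\fl{a_{n}t})/n=V\bigl(N^{-1}(\fl{a_{n}t})\bigr)/n$ and $N^{-1}(\fl{a_{n}t})\to T^{-1}(t)=\tau(t)$. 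Because $e^{\gamma\xi}>0$, the limit clock $T$ is continuous and strictly increasing, so its inverse $\tau$ is continuous and the inversion/composition map is continuous at $(e^{\xi},T)$ in the Skorokhod topology; the continuous mapping theorem then gives $X_{n}(\fl{a_{n}t})/n\Rightarrow e^{\xi(\tau(t))}=Y(t)$, which is \eqref{eq:cvthm1}.

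The step I expect to be the main obstacle is the generator convergence of the first step, and specifically the behaviour near the boundary. Assumption \textbf{(A1)} controls $a_{i}\Pi^{\ast}_{i}\to\Pi$ only as $i\to\infty$, so the convergence of $\mathcal{A}_{n}g(y)$ degrades as $y\to-\infty$, i.e.\ for small states $i$ --- this is exactly the boundary singularity flagged in the Introduction. In the present transient regime it is defused by the hypothesis that $\xi$ does not drift to $-\infty$: then $Y$ never hits $0$ and, on each compact time interval, $W_{n}$ stays above a level receding to $-\infty$ slowly enough. Making this rigorous will require a localisation argument --- stopping the chain upon leaving a window $[\delta n,\,\delta^{-1}n]$ and controlling the exit probabilities --- together with uniform regular-variation (Potter-type) estimates to promote the pointwise generator convergence to the uniform convergence demanded by the limit theorems. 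By contrast, the time-change inversion is comparatively soft, resting only on the continuity and strict monotonicity of $T$.
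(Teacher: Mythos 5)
Your proposal follows essentially the same route as the paper: an auxiliary continuous-time chain with holding rate $a_i$ at state $i$ (the paper's $L_n$, whose generator \eqref{eq:genLn} interpolates yours to a genuine Feller process on $\R$), convergence in $\D(\R_+,\R)$ to $\xi$ established through generators under \textbf{(A1)}--\textbf{(A2)}, and inversion of the Lamperti time substitution. Two implementation choices differ but are inessential: the paper links the discrete and continuous clocks through an independent Poisson process $\mathcal{N}_n$ of rate $a_n$ and the exact distributional identity of Lemma \ref{lem:coupling}, rather than your internal jump-count martingale $N(t)-\int_0^t a_{V(s)}\,ds$ (which would itself need a localisation to control its quadratic variation, since $\E{e^{\gamma\xi(s)}}$ may be infinite here); and it handles the region $y\to-\infty$ not by stopping the chain but by noting that, for a compactly supported test function, only the far tail of $a_i\Pi^\ast_i$ contributes there, and that tail is small uniformly in $i$ by \textbf{(A1)} together with the finiteness of each individual measure (proof of Proposition \ref{prop:cvL}). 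One conceptual correction: the hypothesis that $\xi$ does not drift to $-\infty$ plays no role in the generator convergence --- Proposition \ref{prop:cvL} holds under \textbf{(A1)}--\textbf{(A2)} alone --- so it does not ``defuse'' the boundary singularity in that step; it enters precisely in the step you call soft, since continuity and strict monotonicity of $T(u)=\int_0^u e^{\gamma\xi(s)}\,ds$ are automatic and what is really needed is $I_\infty=T(\infty)=\infty$, which makes $\tau=T^{-1}$ finite and continuous on all of $\R_+$ and the composition a continuous functional; when $\xi$ drifts to $-\infty$ this fails and the paper must deploy the entirely separate machinery of Sections \ref{sec:FL}--\ref{sec:cvthm2}.
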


In this case, $Y$ does not touch $0$ almost surely (see the left-most image in Figure  \ref{fig:MC}). When $ \xi$ drifts to $- \infty$, we  establish an analogous result for the chain $X_{n}$ stopped when it reaches some fixed finite set  under the following additional assumption: 

\begin{framed}
\textbf{(A3).} There exists $ \beta>0$ such that $$\displaystyle \limsup_{n \rightarrow \infty} a_{n} \cdot \int_{1}^{\infty}  e^{\beta x }\ \Pi^{\ast}_{n}(dx) < \infty.$$
\end {framed}

 Observe that  \textbf{(A1)} and \textbf{(A3)} imply that $ \int_{1}^{\infty} e^{\beta x} \ \Pi(dx)< \infty$. Roughly speaking, Assumption \textbf{(A3)} tells us that in the case where $ \xi$ drifts to $-\infty$, the chain $X_{n}/n$ does not make too large positive jumps and will enable us to use Foster--Lyapounov type estimates (see Sec.~\ref {sec:FL}). Observe that \textbf{(A3)} is automatically satisfied if the Markov chain is non-increasing or has uniformly bounded upwards jumps.
 
 In the sequel, we let $K \geq 1$ be any fixed  integer such that the set $ \{ 1,2, \ldots,K\}$ is accessible by $X_{n}$ for every $n \geq 1$ (meaning that $ \inf \{ i \geq 0; X_{n}(i) \leq K \}< \infty$ with positive probability for every $n \geq 1$). It is a simple matter to check that if \textbf{(A1)}, \textbf{(A2)} hold and $ \xi$ drifts to $-\infty$, then such integers always exist. Indeed, consider
$$ \kappa \ \coloneqq \ \sup \left\{ n \geq 1: \P(X_n<n)=0  \right \}.$$ 
 If $\kappa= \infty$, then the measure $\Pi^\ast_n$ has support in $[0,\infty)$ for infinitely many $n \in \N$, and thus, if further \textbf{(A1)} and \textbf{(A2)} hold, $ \xi$ must be a subordinator and therefore drifts to $+\infty$. Therefore, $\kappa< \infty$ if $ \xi$ drifts to $-\infty$, and by definition of $\kappa$, the set $ \{ 1,2, \ldots,\kappa\}$ is accessible by $X_{n}$ for every $n \geq 1$. For irreducible Markov chains, one can evidently take $K=1$.
 
 A crucial consequence is that if \textbf{(A1)}, \textbf{(A2)}, \textbf{(A3)} hold and  the L\'evy process $ \xi$ drifts to $- \infty$, then $ \{ 1,2, \ldots,K\}$ is recurrent for the Markov chain, in the sense that for every $n \geq 1$, $ \inf \{ k \geq 1; X_{n}(k) \leq K\}<\infty$ almost surely (see Lemma \ref {lem:utile}). Loosely speaking, we call this the recurrent case.

Finally, for every $n \geq 1$, let $ {X}^{\dagger}_{n}$ be the Markov chain $X_{n}$ stopped at its first visit to $ \{ 1,2, \ldots,K\} $, that is ${X}^{\dagger}_{n}(\cdot)=X_{n}( \cdot \wedge A^{(K)}_{n})$,  where $A^{(K)}_{n}= \inf \{ k \geq 1; X_{n}(k) \leq K\}$,
 with again the usual convention  $ \inf \emptyset = \infty$.

\begin {thm}\label {thm:main2} Assume that  \textbf{(A1)}, \textbf{(A2)}, \textbf{(A3)} hold  and that  the L\'evy process $ \xi$ drifts to $- \infty$. Then the convergence
\begin{equation}
\label{eq:cvthm2} \left( \frac{ {X}^{\dagger} _{n}( \lfloor a_{n} t \rfloor)}{n} ; t \geq 0 \right)   \quad\mathop{\longrightarrow}^{(d)}_{n \rightarrow \infty} \quad (Y(t); t\geq 0)
\end{equation}
holds in distribution  in $ \D(\R_{+},\R)$.
\end {thm}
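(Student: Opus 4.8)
The plan is to carry out the program sketched in the Introduction: pass to the logarithm of the chain, prove convergence to the L\'evy process $\xi$ at the level of infinitesimal generators in a well-chosen intrinsic time-scale where the boundary causes no trouble, and then recover the genuine chain by inverting the time-substitution via the Lamperti transform. The stopping at $\{1,2,\ldots,K\}$ together with Assumption \textbf{(A3)} enter precisely to tame the regime where the chain is close to the absorbing boundary.

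I would first introduce the continuous-time companion of Section~\ref{sec:aux}: let $\hat{X}_n$ be the continuous-time Markov chain whose embedded jump chain has transition probabilities $(p_{i,j})$ and whose holding time in state $i$ is exponential with rate $a_i$, absorbed upon entering $\{1,\ldots,K\}$. Its generator is $\hat{\mathcal L}_n f(i)=a_i\sum_{j\geq1}p_{i,j}\bigl(f(j)-f(i)\bigr)$. The crux is the following generator convergence: for $f(i)=h(\ln(i/n))$ with $h$ smooth and compactly supported and $y=\ln(i/n)$ kept fixed (so that $i=\fl{ne^{y}}\to\infty$),
$$\hat{\mathcal L}_n f(i)=a_i\int_{\R}\bigl(h(y+x)-h(y)\bigr)\,\Pi^{\ast}_i(dx)\;\longrightarrow\;\tfrac12\sigma^{2}h''(y)+b\,h'(y)+\int_{\R}\bigl(h(y+x)-h(y)-x\,h'(y)\,\mathbbm{1}_{|x|\leq1}\bigr)\,\Pi(dx),$$
the right-hand side being the generator $\mathcal A$ of $\xi$; here \textbf{(A1)} governs the large-jump contribution and \textbf{(A2)} the truncated first and second moments. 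From this I would deduce, via the Trotter--Kurtz theorem (Ethier \& Kurtz), that $\ln(\hat X_n/n)$ converges in the Skorokhod sense to $\xi$, the exponentials $h(\ln(\cdot/n))$ serving as a convergence-determining core and the convergence being uniform for $y$ in compacts. Note that in the intrinsic clock the absorption level $\ln(K/n)\to-\infty$, so the stopping has no effect on compact time-intervals and the limit is the full process $\xi$.

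It then remains to invert the time-substitution. The point is that $\hat X_n$ and the discrete chain are linked by a Lamperti-type change of time: the embedded chain of $\hat X_n$ is exactly $X_n$, and the real-time clock is recovered from $\hat X_n$ by the additive functional $t\mapsto\int_0^{\cdot}(\hat X_n(s)/n)^{\gamma}\,ds$, whose value at the $k$-th jump concentrates, by regular variation of $(a_n)$, around $k/a_n$. Consequently the real-time rescaled process coincides with $\bigl(X_n^{\dagger}(\fl{a_n t})/n\bigr)$, while on the limiting side the very same operation $Y(t)=e^{\xi(\tau(t))}$, with $\tau$ inverting $u\mapsto\int_0^u e^{\gamma\xi(s)}\,ds$, is precisely the Lamperti transform. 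I would therefore appeal to the continuity of the Lamperti map in $\D(\R_+,\R)$ to push the convergence $\ln(\hat X_n/n)\Rightarrow\xi$ forward to $\bigl(X_n^{\dagger}(\fl{a_n t})/n\bigr)\Rightarrow Y$. The mechanism is that the boundary singularity, which sits at the finite real-time absorption instant $I_\infty$, is sent to intrinsic time $+\infty$, where no singularity is present.

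The main obstacle is to make this inversion genuinely continuous up to and including the absorption time $I_\infty$, and this is exactly where the standing hypothesis $\xi\to-\infty$ (hence $I_\infty<\infty$ and $Y$ reaches $0$ in finite time, matching the stopping of $X_n$ at $\{1,\ldots,K\}$) and Assumption \textbf{(A3)} are indispensable. Using \textbf{(A3)} I would establish the Foster--Lyapounov/exponential-moment bounds prepared in Section~\ref{sec:FL}, showing that the stopped chain makes no atypically large upward excursions; this yields tightness of $\bigl(X_n^{\dagger}(\fl{a_n t})/n\bigr)$ and, in tandem with Lemma~\ref{lem:utile} guaranteeing that $\{1,\ldots,K\}$ is reached in finite time, controls the convergence of the absorption times. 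The delicate step is to rule out any escape of mass to $+\infty$ and to secure joint convergence of the log-process and of the additive functional $\int_0^{\cdot}e^{\gamma\xi(s)}\,ds$, so that absorption at $0$ is correctly captured in the limit; once this is in hand, continuity of the Lamperti transform closes the argument.
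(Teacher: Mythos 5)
Your proposal follows essentially the same route as the paper: your continuous-time companion with holding rates $a_i$ is the process $n\exp(L_n)$ of Section~\ref{sec:aux}, the generator convergence is Proposition~\ref{prop:cvL}, the time-change identity is Lemma~\ref{lem:coupling}, and the control of upward excursions of the stopped chain via \textbf{(A3)} is Lemma~\ref{lem:discrete}. The one place where your sketch is thinner than the paper is the appeal to ``continuity of the Lamperti map'': since the time change explodes when $\xi$ drifts to $-\infty$, the paper instead truncates the intrinsic clock at a finite $t_0$, proves convergence of the truncated processes, and bounds the Skorokhod distance to the untruncated ones with high probability via the supermartingale estimates of Lemmas~\ref{lem:continuous} and~\ref{lem:discrete} --- which is exactly the step you flag as the ``main obstacle'' and for which you name the correct ingredients.
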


In this case, the process $Y$ is absorbed once it reaches $0$ (see the second and third images from the left in Fig.~\ref{fig:MC}). This result extends  \cite[Theorem 1]{HM11}, see Section \ref{sec:HM} for details. We will discuss in Section \ref {sec:nona} what happens when the Markov chain $ X_{n}$ is not stopped anymore. Observe that according to the asymptotic behavior of $ \xi$, the behavior of $Y$ is drastically different: when  $ \xi$ drifts to $- \infty$,  $Y$ is absorbed at $0$ at a finite time  and $Y$ remains forever positive otherwise. 

Let us mention that with the same techniques, it is possible to extend Theorems \ref {thm:main1} and  \ref {thm:main2} when the L\'evy process $ \xi$ is killed at a random exponential time, in which case $Y$  reaches $0$ by a jump. However, to simplify the exposition, we shall not pursue this goal here.

Given $ \sigma ^{2} \geq 0$, $ b \in \R $, $ \gamma > 0$ and a measure $ \Pi$ on $ \R \backslash \{ 0\}$ such that \eqref{eq:condPi} holds and $ \Pi ( \{ -1,1\} )=0$, it is possible to check the existence of a family $(p_{n,k}; n,k \geq 1)$ such that \textbf{(A1)} and \textbf{(A2)}, hold (see e.g.~\cite[Proposition 1]{HM11} in the non-increasing case). We may further request \textbf{(A3)} whenever $\int_1^{\infty} e^{\beta x} \Pi(dx)<\infty$ for some $\beta >0$. As a consequence, our Theorems \ref {thm:main1} and   \ref {thm:main2}  show that any nonnegative self-similar Markov process, such that its associated L\'evy measure $ \Pi$ has a small finite exponential moment on $[1,\infty)$, considered up to its first hitting time of the origin is the scaling limit of a Markov chain.

\subsection {Convergence of the absorption time} 

It is natural to ask whether the convergence \eqref{eq:cvthm2} holds jointly with the convergence of the associated absorption times.  Observe that this is not a mere consequence of Theorem \ref {thm:main2}, since absorption times, if they exist, are in general not continuous functionals for the Skorokhod topology on $ \D(\R_{+},\R)$.  Haas \& Miermont \cite[Theorem 2]{HM11} proved that, indeed, the associated absorption time converge for non-increasing Markov chains. We will prove that, under the same assumptions as for Theorem \ref {thm:main2},  the associated absorption times converge in distribution, and further the convergence holds also for the expected value under an additional  positive-recurrent type assumption.

 Let $ \Psi$ be the Laplace exponent associated with $\xi$, which is given by $$ \Psi(\lambda)= \Phi(-i \lambda)=\frac{1}{2} \sigma^{2} \lambda^{2} + b \lambda+ \int_{- \infty}^{\infty} \left( e^{\lambda x}-1 - \lambda x \mathbbm {1}_{|x| \leq 1} \right)  \ \Pi(dx).$$
for those values of $ \lambda \in \R$ such that this quantity is well defined, so that $ \Es { e^{\lambda \xi(t)}}=e^{ t\Psi(\lambda)}$. Note that $\textbf{(A3)}$ implies that $ \Psi$ is well defined on a positive neighborhood of $0$. 

\begin{framed}
\textbf{(A4)}. There exists $ \beta_{0}> \gamma$ such that
 \begin{equation}
 \label{eq:H4}\displaystyle \limsup_{n \rightarrow \infty} a_{n} \cdot \int_{1}^{\infty}  e^{\beta_{0} x }\ \Pi^{\ast}_{n}(dx) < \infty \qquad \textrm { and } \qquad  \Psi(\beta_{0})<0.
 \end{equation}
\end {framed}

Note the difference with \textbf{(A3)}, which only requires  the first inequality of \eqref{eq:H4} to hold for a certain $ \beta_{0}>0$. Also, if \textbf{(A4)} holds, then we have $ \Psi( \gamma)<0$ by convexity of $ \Psi$. Conversely, observe that \textbf{(A4)} is automatically satisfied if $ \Psi( \gamma)<0$ and the Markov chain has  uniformly bounded upwards jumps. 

 A crucial consequence is that if \textbf{(A1)}, \textbf{(A2)} and \textbf{(A4)} hold, then   the L\'evy process $ \xi$ drifts to $- \infty$ and   the first hitting time $ A_{n}^{(k)}$ of $ \{ 1,2, \ldots,k\}$ by $X_{n}$ has finite expectation for every  $n> k$, where $k$ is sufficiently large (see Lemma \ref {lem:utile2}). Loosely speaking, we call this the positive recurrent case. 
 
\begin {thm}\label{thm:absorption}Assume that \textbf{(A1)}, \textbf{(A2)}, \textbf{(A3)} hold and that $ \xi$ drifts to $- \infty$.  Let $K \geq 1$ be such that $ \{ 1,2, \ldots,K\}$ is accessible by $X_{n}$ for every $n \geq 1$.
\begin{enumerate}
\item[(i)] 
We have \begin{equation}
\label{eq:cvtemps} \frac{A_{n}^{(K)}}{a_{n}}\quad\mathop{\longrightarrow}^{(d)}_{n \rightarrow \infty} \quad \int_{0}^{\infty} e^{\gamma \xi(s)}ds,
\end{equation}
and this convergence holds jointly with \eqref{eq:cvthm2}. 
\item[(ii)] If further \textbf{(A4)} holds, and in addition, 
 \begin{equation}
 \label{eq:H42} \textrm{for every } n \geq K+1, \qquad \sum_{k \geq 1} k^{\beta_{0}} \cdot p_{n,k}< \infty,
 \end{equation}
then
\begin{equation}
\label{eq:CVL1} {\frac{\Es{A^{(K)}_{n}}}{a_{n}}} \quad\mathop{\longrightarrow}_{n \rightarrow \infty} \quad  \frac{1}{|\Psi(\gamma)|}.
\end{equation}
\end{enumerate}
\end {thm}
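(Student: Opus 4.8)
The plan is to deduce both statements from the auxiliary continuous-time Markov chain of Section \ref{sec:aux} together with the Lamperti-type time substitution, using the Foster--Lyapounov estimates of Section \ref{sec:FL} to control the behaviour near the boundary. Write $\mathcal{X}_n$ for the continuous-time chain that, while at state $i$, waits an exponential time of parameter $a_i$ before jumping according to $(p_{i,j})_{j}$; its embedded jump chain is exactly $X_n$, and by construction $\log(\mathcal{X}_n(u)/n)\to\xi(u)$ in distribution (the content of Section \ref{sec:aux}). Let $N_n(u)$ be the number of jumps performed by $\mathcal{X}_n$ before the continuous (``L\'evy'') time $u$; since $N_n(u)-\int_0^u a_{\mathcal{X}_n(s)}\,ds$ is a martingale, the regular variation of $(a_n)$ and $\mathcal{X}_n(s)/n\to e^{\xi(s)}$ give $N_n(u)/a_n\to\int_0^u e^{\gamma\xi(s)}\,ds$, jointly with $\mathcal{X}_n(\cdot)/n\to e^{\xi(\cdot)}$. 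Inverting this additive functional (its inverse being $\tau$) recovers \eqref{eq:cvthm2}, and it is from this joint convergence that I would read off the joint statement in (i).

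For part (i), I would note that $A_n^{(K)}=N_n(\sigma_n)$ with $\sigma_n=\inf\{u\ge0:\mathcal{X}_n(u)\le K\}$, and that $\sigma_n\to\infty$ because $\xi$ reaches $-\infty$ only as $u\to\infty$. Splitting the L\'evy-time integral at a large fixed level $u_0$, the part of $N_n(\sigma_n)/a_n$ accumulated before $u_0$ converges to $\int_0^{u_0}e^{\gamma\xi(s)}\,ds$, which tends to $I_\infty=\int_0^\infty e^{\gamma\xi(s)}\,ds$ as $u_0\to\infty$ (finite since $\xi$ drifts to $-\infty$). The remaining part counts the jumps made between time $u_0$ and absorption, i.e.\ while the chain descends from a level $\approx e^{\xi(u_0)}n$ down to $\{1,\dots,K\}$; I would bound its normalised expectation, uniformly in $n$, by $\int_{u_0}^\infty e^{\gamma\xi(s)}\,ds$, which is small for $u_0$ large. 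This uniform tail estimate is the crux: it is what \textbf{(A3)} delivers, forbidding large upward excursions of $X_n/n$ away from the boundary so that the Foster--Lyapounov estimates behind Lemma \ref{lem:utile} dominate the contribution of the small states. Letting $u_0\to\infty$ then yields \eqref{eq:cvtemps}, jointly with \eqref{eq:cvthm2}.

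For part (ii) the decisive input is the drift of the test function $i\mapsto a_i$. Using $a_{ie^x}/a_i\to e^{\gamma x}$, the vague convergence \textbf{(A1)} and the second-order control \textbf{(A2)}, one finds
$$ g(i):=\sum_{j\ge1}p_{i,j}\,a_j-a_i\ \xrightarrow[i\to\infty]{}\ \tfrac12\sigma^2\gamma^2+b\gamma+\int_{-\infty}^{\infty}\big(e^{\gamma x}-1-\gamma x\,\mathbbm{1}_{|x|\le1}\big)\,\Pi(dx)=\Psi(\gamma)<0. $$
Here \textbf{(A4)} together with \eqref{eq:H42} is exactly what makes $\sum_j p_{i,j}a_j$ finite and licenses the passage to the limit in the large-jump part $\int(e^{\gamma x}-1)\,a_i\Pi^\ast_i(dx)$, since $e^{\gamma x}$ is then dominated by the integrable $e^{\beta_0 x}$ with $\beta_0>\gamma$. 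I would then form the martingale $M_k=a_{X_n(k\wedge A_n^{(K)})}-\sum_{l=0}^{(k\wedge A_n^{(K)})-1}g(X_n(l))$, apply optional stopping (legitimate because $\mathbb{E}[A_n^{(K)}]<\infty$ by Lemma \ref{lem:utile2} and the increments are integrable thanks to \eqref{eq:H42}), and rearrange to
$$ \mathbb{E}\!\left[\sum_{l=0}^{A_n^{(K)}-1}\big(-g(X_n(l))\big)\right]=a_n-\mathbb{E}\big[a_{X_n(A_n^{(K)})}\big], $$
the subtracted term being bounded by $a_K$. Since $-g(i)\to|\Psi(\gamma)|$, dividing by $a_n|\Psi(\gamma)|$ and discarding, as in part (i), the lower-order contribution of the $o(a_n)$ steps spent at small states (where $g$ is not yet close to $\Psi(\gamma)$) gives \eqref{eq:CVL1}; this is consistent with (i) through $\mathbb{E}[I_\infty]=\int_0^\infty e^{s\Psi(\gamma)}\,ds=1/|\Psi(\gamma)|$.

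In both parts the genuine difficulty is the uniform-in-$n$ control of the number of steps the chain accumulates near $\{1,\dots,K\}$: the rescaled absorption time is not a continuous functional of the rescaled trajectory, and the limiting constant in (ii) is sensitive to the values of $g$ at small states. Both issues are handled by the exponential-moment hypotheses \textbf{(A3)} and \textbf{(A4)}, which rule out large upward jumps and render the Foster--Lyapounov drift estimates effective. I therefore expect this boundary analysis, rather than the essentially algebraic identification of the constant $\Psi(\gamma)$, to be the technically demanding step.
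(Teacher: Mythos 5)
Your overall architecture for (i) matches the paper's (reduce to the continuous-time chain, split the additive functional at a large L\'evy time $u_0$, and control the tail uniformly in $n$), but the tail estimate you propose is the step that fails. You say you would bound, uniformly in $n$, the \emph{normalised expectation} of the number of jumps accumulated after time $u_0$. Under the hypotheses of part (i) --- \textbf{(A1)}--\textbf{(A3)} only, without \textbf{(A4)} --- the chain may be merely null recurrent, in which case $\Es{A^{(K)}_i}=\infty$ for every $i$ (see Remark \ref{rem:inf} and the surrounding discussion), so no first-moment bound on the remaining number of steps exists. The paper circumvents this by proving a \emph{fractional} moment bound $\Es{(A^{(M)}_i)^{\beta}}\leq C\, a_i^{\beta}$ for a small $\beta>0$, via the Aspandiiarov--Iasnogorodski criterion applied with the Lyapounov function $h(x)=a_x^{\beta}$, and then converting it into a probability estimate by Markov's inequality and the Potter bounds (Lemma \ref{lem:add} and \eqref{eq:ch}). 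The drift inequality $\sum_k(a_k^{\beta}-a_n^{\beta})p_{n,k}\leq -Ca_n^{\beta-1}$ needed for this is itself not automatic: it is Lemma \ref{lem:cvdom}, whose proof requires replacing $(a_{ne^x}/a_n)^{\beta}$ by $e^{\beta\gamma x}$ inside the integral against $a_n\Pi_n^{\ast}$, and the slowly varying part of $(a_n)$ is handled only after passing to a smooth representative \eqref{eq:diffa} and invoking \textbf{(A2)} to kill the first-order term $\beta h^{(1)}(\ln n)\,a_n\int_{-1}^1 x\,\Pi_n^{\ast}(dx)$. Your sketch silently assumes this replacement both here and in part (ii).

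For (ii) your route is genuinely different from the paper's: you compute the drift of $i\mapsto a_i$ and use an optional-stopping/occupation identity, whereas the paper writes $\Es{A^{(M)}_n}/a_n=\int_0^{\infty}\Es{a_{n\exp(L_n(s))}a_n^{-1}\mathbbm{1}_{\{s<\alpha^{(M)}_n\}}}\,ds$ and passes to the limit by uniform integrability, supplied by the exponential supermartingale $\exp(\beta_0 L_n(t\wedge\alpha^{(M)}_n)+c(t\wedge\alpha^{(M)}_n))$ of Corollary \ref{cor:super}. Your approach could be made to work and is arguably more elementary, but two steps need repair. First, the identification $g(i)\to\Psi(\gamma)$ is exactly Lemma \ref{lem:cvdom} with $\beta=1$ and is not a consequence of \textbf{(A1)}--\textbf{(A2)} alone without the smoothing argument just described. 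Second, your justification of optional stopping (``$\Es{A^{(K)}_n}<\infty$ and the increments are integrable'') is insufficient: the conditional absolute increments $\sum_j p_{i,j}|a_j-a_i|$ are of order $a_i$ and hence not uniformly bounded, so one needs uniform integrability of $(a_{X_n(k\wedge A^{(K)}_n)})_{k\geq 0}$; this follows from the supermartingale $X_n(\cdot\wedge A^{(M)}_n)^{\beta_0}$ with $\beta_0>\gamma$ (Lemma \ref{lem:utile}\,(iii)) together with $a_i=O(i^{\gamma+\epsilon})$, but it must be said, and it is again a Foster--Lyapounov input of the same nature as the one your part (i) is missing. The final ``discard the small states'' step also requires the (true, but unstated) uniform bound on the expected number of visits to a fixed finite set before absorption.
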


We point out that when \eqref{eq:H4} is satisfied, the inequality $ \sum_{k \geq 1} k^{\beta_{0}} \cdot p_{n,k}< \infty$ is automatically satisfied for every $n$ sufficiently large, that is  condition \eqref{eq:H42} is then fulfilled provided that $K$ has been chosen sufficiently large. See Remark \ref {rem:moments} for the extension of \eqref{eq:CVL1} to higher order moments.  Finally, observe that \eqref{eq:H42} is the only condition which does not only depend on the asymptotic behavior of $p_{n, \cdot}$ as $ n \rightarrow \infty$ (the behavior of the law of $X_{n}(1)$ for small values of $n$ matters here).

This result has been proved by Haas \& Miermont  \cite[Theorem 2]{HM11} in the case of non-increasing Markov chains. However, some differences appear in our more general setup. For instance, \eqref{eq:CVL1} always is true when the chain is non-increasing, but clearly cannot hold if $ \Psi(\gamma)>0$ (in this case $\int_{0}^{\infty} e^{\gamma \xi(s)}ds=\infty$ a.s.) or if the Markov chain is irreducible and not positive recurrent (in this case $\E{A^{(K)}_{n}}=\infty$).

\subsection {Scaling limits for the non-absorbed Markov chain}
\label {sec:nona}

It is natural to ask if Theorem \ref {thm:main2} also holds for the non-absorbed Markov chain $X_{n}$. Roughly speaking, we show that the answer is affirmative if it does not make too large jumps when reaching low values belonging to $ \{ 1,2, \ldots,K\}$, as quantified by the following last assumption which completes \textbf{(A4)}.

\begin{framed}
\textbf{(A5)}. Assumption \textbf{(A4)} holds and, in addition, for every $n\geq 1$, we have
$$
 \Es{X_n(1)^{\beta_0}}= \sum_{k \geq 1} k^{\beta_{0}} \cdot p_{n,k} \ < \ \infty,
$$
with $\beta_{0}>\gamma$ such that \eqref{eq:H4} holds. 
\end {framed}

\begin {thm}\label {thm:main3} Assume that  \textbf{(A1)}, \textbf{(A2)} and \textbf{(A5)} hold.  Then the convergence
\begin{equation}
\label{eq:cvthm3} \left( \frac{ {X} _{n}( \lfloor a_{n} t \rfloor)}{n} ; t \geq 0 \right)   \quad\mathop{\longrightarrow}^{(d)}_{n \rightarrow \infty} \quad (Y(t); t\geq 0)
\end{equation}
holds in distribution  in $ \D(\R_{+},\R)$.
\end {thm}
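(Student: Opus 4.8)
The plan is to deduce Theorem \ref{thm:main3} from Theorem \ref{thm:main2} by showing that the unstopped chain $X_n$ and the stopped chain $X^{\dagger}_n$ are indistinguishable at the scale considered, with probability tending to one. First I would record that \textbf{(A5)} entails \textbf{(A4)}, hence \textbf{(A3)}, and that \textbf{(A4)} forces $\Psi(\gamma)<0$ by convexity, so that $\xi$ drifts to $-\infty$; thus all hypotheses of Theorem \ref{thm:main2} are in force and $(X^{\dagger}_n(\fl{a_n t})/n;\,t\ge0)$ converges in distribution to $(Y(t);\,t\ge0)$, where $Y$ is absorbed at $0$. Since $X_n$ and $X^{\dagger}_n$ coincide up to the first entrance time $A^{(K)}_n$ into $\{1,\ldots,K\}$, and since $X^{\dagger}_n/n\le K/n\to 0$ afterwards, it remains to prove that after $A^{(K)}_n$ the unstopped chain stays at scale $o(n)$; precisely, that for every $T>0$ and $\eps>0$,
\[
\Pr{\sup_{A^{(K)}_n\le k\le \fl{a_n T}} X_n(k) > \eps n} \xrightarrow[n\to\infty]{} 0 .
\]

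The main tool is a Foster--Lyapounov estimate based on the test function $V(x)=x^{\beta_0}$. Writing $\Es{V(X_n(k+1))\mid X_n(k)=i}=i^{\beta_0}\int e^{\beta_0 x}\,\Pi^{\ast}_i(dx)$, I would first show, combining the vague convergence \textbf{(A1)}, the truncated-moment convergence \textbf{(A2)} and the exponential-moment control in \textbf{(A4)} (the latter providing the uniform integrability needed to pass from compactly supported test functions to the unbounded function $e^{\beta_0 x}$ on the upper tail), that
\[
a_i\Bigl(\int e^{\beta_0 x}\,\Pi^{\ast}_i(dx)-1\Bigr)\xrightarrow[i\to\infty]{}\Psi(\beta_0)<0 .
\]
Hence there is a level $i_0$ above which $V$ is superharmonic for the chain, i.e.\ $\Es{V(X_n(k+1))\mid X_n(k)=i}\le V(i)$ for $i\ge i_0$. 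Below $i_0$ the finiteness assumption in \textbf{(A5)}, namely $\sum_k k^{\beta_0}p_{i,k}<\infty$ for every $i$, bounds the finitely many quantities $\Es{V(X_n(1))\mid X_n(0)=i}$ by a constant $C$, so that $\Es{V(X_n(k+1))\mid \mathcal F_k}\le V(X_n(k))+C\,\mathbbm 1_{\{X_n(k)<i_0\}}$. Stopping at $T_M=\inf\{k:X_n(k)\ge M\}$ with $M=\eps n$ and summing this inequality up to time $\fl{a_n T}\wedge T_M$, the superharmonic part telescopes away while the correction is crudely bounded by $C\fl{a_n T}$, which yields
\[
(\eps n)^{\beta_0}\,\Pr{T_M\le \fl{a_n T}}\le V(X_n(0))+C\fl{a_n T} .
\]
By the strong Markov property at $A^{(K)}_n$ the supremum over $[A^{(K)}_n,\fl{a_n T}]$ is dominated by $\sup_{0\le k\le \fl{a_n T}}X_j(k)$ for a chain $X_j$ started from some $j\le K$, so $V(X_n(0))\le K^{\beta_0}$; since $a_n$ is regularly varying of index $\gamma<\beta_0$, the right-hand side divided by $(\eps n)^{\beta_0}$ is of order $a_n/n^{\beta_0}\to 0$, which is exactly the displayed bound.

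Finally I would combine the two ingredients. On the complement of the rare event above, $X_n$ and $X^{\dagger}_n$ coincide on $[0,A^{(K)}_n)$ and both remain below $\eps n$ afterwards, so their rescaled trajectories satisfy $\sup_{0\le t\le T}|X_n(\fl{a_n t})-X^{\dagger}_n(\fl{a_n t})|/n\le 2\eps$ for $n$ large; hence the Skorokhod distance between $(X_n(\fl{a_n t})/n)$ and $(X^{\dagger}_n(\fl{a_n t})/n)$ on $[0,T]$ tends to $0$ in probability. Together with the convergence $X^{\dagger}_n/n\to Y$ from Theorem \ref{thm:main2} and a standard Slutsky-type argument, and the fact that convergence in $\D(\R_+,\R)$ reduces to convergence on each $[0,T]$ (fixed times being continuity points of $Y$ almost surely), this gives \eqref{eq:cvthm3}. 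The delicate point is the maximal inequality over the time window $[0,\fl{a_n T}]$ whose length grows to infinity: it is precisely the joint effect of $\beta_0>\gamma$ (so that $a_n=o(n^{\beta_0})$) and $\Psi(\beta_0)<0$ (superharmonicity away from the boundary) that keeps the post-absorption excursions at scale $o(n)$, while the full-range moment condition in \textbf{(A5)} is what controls the $\Theta(a_n T)$ excursions emanating from the boundary region, where $V$ fails to be superharmonic.
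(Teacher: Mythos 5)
Your proposal is correct, and while the overall reduction (deduce the theorem from Theorem \ref{thm:main2} by showing that after $A_n^{(K)}$ the unstopped chain stays at scale $o(n)$ over the whole window $[0,\fl{a_nT}]$, then conclude via closeness in the uniform, hence Skorokhod, distance) coincides with the paper's, the proof of the key maximal inequality is genuinely different. The paper decomposes the trajectory of $X_i$ ($i\le K$) into its successive excursions from $\{1,\dots,K\}$, notes that there are at most $\fl{a_nt_0}$ of them in the window since $T^{(k)}\ge k$, and is therefore led to prove the per-excursion estimate $a_n\,\Pr{\sup_{0\le j\le T^{(1)}}X_i(j)\ge\eps n}\to 0$; this it obtains by comparing $\Es{T^{(1)}}<\infty$ (positive recurrence under \textbf{(A5)}, Lemma \ref{lem:utile2}) with $\Es{T^{(1)}-\tau_n}=\sum_{j\ge\eps n}\Pr{X_i(\tau_n)=j}\Es{A_j^{(K)}}$ and the two-sided bound $\Es{A_j^{(K)}}\asymp a_j$ coming from Theorem \ref{thm:absorption}\,(ii) together with the Potter bounds. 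You instead run a single optional-stopping argument with the Lyapounov function $V(x)=x^{\beta_0}$ over the entire window: superharmonicity above a level $i_0$ (which is exactly \eqref{eq:claim}/Lemma \ref{lem:utile2}\,(i)) makes the drift telescope, the finitely many states below $i_0$ contribute an additive error at most $C\fl{a_nT}$ thanks to the full-range moment condition in \textbf{(A5)}, and the conclusion follows from $a_n=o(n^{\beta_0})$ since $\beta_0>\gamma$. Your route buys independence from Theorem \ref{thm:absorption}\,(ii) and from the excursion decomposition, and is arguably more elementary; the paper's route yields the sharper statement that a \emph{single} excursion reaches level $\eps n$ with probability $o(1/a_n)$. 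Two small points to tighten: (a) for the uniform integrability needed to pass from \textbf{(A1)}--\textbf{(A2)} to $a_i\bigl(\int e^{\beta_0x}\,\Pi_i^{\ast}(dx)-1\bigr)\to\Psi(\beta_0)$ one should work with a slightly smaller exponent $\beta_0'\in(\gamma,\beta_0)$ (still satisfying $\Psi(\beta_0')<0$ by convexity), the moment at $\beta_0$ providing the domination — the paper performs the same shrinkage in Lemma \ref{lem:utile}; (b) the optional stopping step needs $\Es{V(X_j(k))}<\infty$ for all $k$, which follows by induction from your drift inequality, and is worth stating since the supermartingale is not bounded.
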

 Recall that when $ \xi$ drifts to $-\infty$, we have $I_{\infty}< \infty$ and $Y_{t}=0$ for $t \geq I_{\infty}$, so that roughly speaking this result tells us that with probability tending to $1$ as $n \rightarrow \infty$, once $X_{n}$ has reached levels of order $o(n)$, it will remain there on time scales of order $a_{n}$.

If \textbf{(A4)} holds but not \textbf{(A5)}, we believe that the result of Theorem \ref {thm:main3} does not hold in general since the Markov chain may become null-recurrent (see Remark \ref {rem:inf}) and the process may ``restart'' from $0$ (see Section \ref {sec:questions}).

\subsection {Techniques}
We finally briefly comment on the techniques involved in the proofs of Theorems \ref {thm:main1} and \ref {thm:main2},  which differ from those of  \cite {HM11}. We start by embedding $X_{n}$ in continuous time by considering an independent Poisson process $ \mathcal{N}_{n}$ of parameter $a_{n}$, which allows us to construct a continuous-time Markov process $L_{n}$ such that the following equality in distribution holds
$$ \left(  \frac{1}{n} X_{n}(  { \mathcal{N}_{n}(t) }) ; t \geq 0 \right)  \quad\mathop{=}^{(d)}  \quad \left( \exp(L_{n}(\tau_{n}(t))); t \geq 0 \right),$$ 
where $ \tau_{n}$ is a Lamperti-type time change of $L_{n}$ (see~\eqref{eq:timechange}). Roughly speaking, to establish Theorems \ref {thm:main1} and \ref {thm:main2}, we use the characterization of functional convergence of Feller processes by generators in order to show that $L_{n}$ converges in distribution to $ \xi$ and that $\tau_{n}$ converges in distribution towards $ \tau$. However, one needs to proceed with particular caution when $ \xi$ drifts to $ - \infty$, since  the time changes then explode. In this case, assumption \textbf{(A3)} will give us useful bounds on the growth of $X_{n}$ by Foster--Lyapounov techniques.

\section {An auxiliary continuous-time Markov process}
\label {sec:aux}

In this section, we construct an auxiliary continuous-time Markov chain $(L_{n}(t); t \geq 0)$ in such a way that $L_{n}$, appropriately scaled, converges to $ \xi$ and such that, roughly speaking, ${ X}_{n}$ may be recovered from $ \exp(L_{n})$ by a Lamperti-type time change.

\subsection {An auxiliary continuous-time Markov chain $L_{n}$}
\label {sec:compound}

For every $ n \geq 1$, first let $ (\xi_{n}(t); t \geq 0)$ be a compound Poisson process with L\'evy measure $a_{n} \cdot \Pi_{n}^{\ast}$. That is
$$ \Es {e^{ i \lambda \xi_{n}(t)}}= \exp \left( t \int_{- \infty}^{ \infty}(e^{i \lambda x}-1) \cdot   a_{n} \Pi^{\ast}_{n}(dx) \right), \qquad \lambda \in \R, t \geq 0.$$
It is well-known that $ \xi_{n}$ is a Feller process on $ \R$ with generator $ \mathcal{A}_{n}$ given by
$$ \mathcal{A}_{n}f(x)= a_{n} \int_{- \infty}^{\infty} \left( f(x+y)-f(x) \right)  \ \Pi_{n}^{\ast}(dy), \qquad f \in \mathcal{C}^{\infty}_{c}( \R), \quad x \in \R,$$  
where $\mathcal{C}^{\infty}_{c}( I)$ denotes the space of real-valued infinitely differentiable functions with compact support in an interval $I$.

It is also well-known that the L\'evy process $ \xi$, which has been introduced in Section \ref {sec:descr}, is  a Feller process on $ \R_{}$ with infinitesimal generator $ \mathcal{A}$ given by
$$  \mathcal{A}f(x)= \frac{1}{2} \sigma^{2} f''(x)+bf'(x)+ \int_{- \infty}^{\infty} \left( f(x+y)-f(x)-f'(x) y \mathbbm {1}_{|y| \leq 1} \right)  \ \Pi(dy), \qquad f \in \mathcal{C}^{\infty}_{c}( \R), \quad x \in 	\R,$$
and, in addition, $\mathcal{C}^{\infty}_{c}( \R)$ is a core for $ \xi$ (see e.g.~\cite[Theorem 31.5]{Sat13}). Under \textbf{(A1)} and \textbf{(A2)}, by \cite[Theorems 15.14 \& 15.17]{Kal02},  $\xi_{n}$ converges in distribution in $ \D(\R_{+},\R)$ as $ n \rightarrow \infty$ to $ \xi$. It is then classical that the  convergence of generators
\begin{equation}
\label{eq:cvgen} \mathcal{A}_{n}f  \quad\mathop{\longrightarrow}_{n \rightarrow \infty} \quad  \mathcal{A}f
\end{equation}
holds for every $f \in \mathcal{C}^{\infty}_{c}( \R)$, in the sense of the uniform norm on $\mathcal{C}_{0}( \R)$. It is also possible to check directly \eqref{eq:cvgen}  by a simple calculation which relies on the fact that $ \lim_{ \epsilon \rightarrow 0} \lim_{n \rightarrow \infty} a_{n} \int_{- \epsilon}^{\epsilon} y^{3}\  \Pi_{n}^{\ast}(dy)=0 $ by \textbf{(A2)} (see Sec.~\ref {sec:zd} for similar estimates). We leave the details to the reader.

For $ x \in \R$, we let $ \{ x\}=x- \fl {x}$ denote the fractional part of $x$ and also set $ \ce {x}= \fl {x}+1$  (in particular  $\ce {n}=n+1$ if $n$ is an integer). By convention, we set $ \mathcal{A}_{0}=0$ and $ \Pi^{\ast}_{0}=0$.  Now introduce an auxiliary continuous-time Markov chain $(L_{n}(t); t \geq 0)$ on $ \R \cup \{ + \infty\} $ which has generator $ \mathcal{B}_{n}$ defined as follows:
\begin{equation}
\label{eq:genLn}\mathcal{B}_{n}f(x)= \left( 1- \{ ne^{x}\} \right) \cdot \mathcal{A}_{ \fl {ne^{x}}}f(x)+    \{ ne^{x}\} \cdot \mathcal{A}_{ \ce {ne^{x}}}f(x)), \qquad f \in \mathcal{C}^{\infty}_{c}( \R), \quad x \in \R.
\end{equation}
We allow $L_{n}$ to take eventually the cemetery value $+ \infty$, since it is not clear for the moment whether $L_{n}$ explodes in finite time or not. The process $L_{n}$ is designed in such a way that if $n \exp(L_{n})$ is at an integer valued state, say $j \in \N$, then it will wait a random time distributed as an exponential random variable of parameter $a_{j}$ and then jump to state $k \in \N$ with probability $p_{j,k}$ for $k \geq 1$. In particular $n \exp(L_{n})$
then remains integer whenever it starts in $\N$. 
Roughly speaking, the generator \eqref{eq:genLn} then extends the possible states of  $L_{n}$ from $  \ln(\N/n)$ to $\R$ by smooth interpolation.  

A crucial feature of $L_{n}$ lies in the following result.

\begin {prop}\label {prop:cvL} Assume that  \textbf{(A1)} and \textbf{(A2)} hold. For every $x \in \R$,  $L_{n}$, started from  $x$, converges in distribution in $ \D(\R_{+},\R)$ as $n \rightarrow \infty$ to $ \xi+x$.
\end {prop}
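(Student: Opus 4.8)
The plan is to establish the convergence $L_n \to \xi + x$ via the standard criterion for functional convergence of Feller processes: it suffices to exhibit a core $\mathcal{C}^{\infty}_{c}(\R)$ for the limit generator $\mathcal{A}$ and to show that $\mathcal{B}_n f \to \mathcal{A} f$ uniformly on $\mathcal{C}_0(\R)$ for every $f$ in that core. Since translating the starting point by $x$ only shifts the process, I would reduce to showing generator convergence and then invoke the characterization theorem (e.g.\ \cite[Theorem 19.25]{Kal02} or the analogous result in \cite{EK86}) together with the already-established fact, recalled just above the statement, that $\mathcal{C}^{\infty}_{c}(\R)$ is a core for $\xi$.

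The heart of the argument is therefore the pointwise (indeed uniform) convergence
$$ \mathcal{B}_n f \quad\mathop{\longrightarrow}_{n \rightarrow \infty}\quad \mathcal{A} f, \qquad f \in \mathcal{C}^{\infty}_{c}(\R). $$
Here I would unpack the definition \eqref{eq:genLn}: at the point $x$, the generator $\mathcal{B}_n$ is a convex combination, with weights $1-\{ne^x\}$ and $\{ne^x\}$, of the two compound-Poisson generators $\mathcal{A}_{\fl{ne^x}}$ and $\mathcal{A}_{\ce{ne^x}}$. The key observation is that as $n \to \infty$, the integer $m_n(x) := \fl{ne^x}$ tends to infinity uniformly for $x$ in the (fixed, compact) support of $f$, so both $\mathcal{A}_{\fl{ne^x}} f(x)$ and $\mathcal{A}_{\ce{ne^x}} f(x)$ should be close to $\mathcal{A} f(x)$. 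The cleanest route is to show that the convergence $\mathcal{A}_m f \to \mathcal{A} f$ of \eqref{eq:cvgen} holds uniformly over the relevant range of indices: writing $m = m_n(x)$, I would verify that $\sup_{m \geq M} \norme{\mathcal{A}_m f - \mathcal{A} f} \to 0$ as $M \to \infty$, which then forces $\mathcal{B}_n f \to \mathcal{A} f$ uniformly regardless of the interpolation weights, since a convex combination of two quantities each close to $\mathcal{A} f(x)$ is itself close to $\mathcal{A} f(x)$.

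Concretely, I would follow the direct computation alluded to in the remark after \eqref{eq:cvgen}. Fixing $f \in \mathcal{C}^{\infty}_{c}(\R)$, I split the integral defining $\mathcal{A}_m f(x) = a_m \int (f(x+y) - f(x)) \, \Pi_m^{\ast}(dy)$ into the regions $|y| \leq \eps$ and $|y| > \eps$. On $|y| > \eps$ the vague convergence \textbf{(A1)} of $a_m \Pi_m^{\ast}$ to $\Pi$ handles the jump part, uniformly in $x$ by compactness of $\mathrm{supp}(f)$ and continuity of $f$. On $|y| \leq \eps$ I would Taylor-expand $f(x+y) - f(x) = f'(x) y + \tfrac{1}{2} f''(x) y^2 + O(y^3)$; the first-order term is governed by $a_m \int_{-1}^{1} y \, \Pi_m^{\ast}(dy) \to b$ and the second-order term by $a_m \int_{-1}^{1} y^2 \, \Pi_m^{\ast}(dy) \to \sigma^2 + \int_{-1}^1 y^2 \Pi(dy)$, both supplied by \textbf{(A2)}, while the cubic remainder is controlled using $\lim_{\eps \to 0}\limsup_{m} a_m \int_{-\eps}^{\eps} |y|^3 \, \Pi_m^{\ast}(dy) = 0$, which itself follows from \textbf{(A2)} by monotone/dominated arguments. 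Letting $\eps \to 0$ recovers exactly the three pieces $\tfrac{1}{2}\sigma^2 f''(x) + b f'(x) + \int (f(x+y) - f(x) - f'(x) y \mathbbm{1}_{|y| \leq 1}) \Pi(dy) = \mathcal{A} f(x)$.

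I expect the main obstacle to be bookkeeping rather than conceptual: I must check that all these estimates are \emph{uniform in the index $m = \fl{ne^x}$} as it ranges over the values attained for $x \in \mathrm{supp}(f)$, and uniform in $x$ itself, so that the convex-combination structure of $\mathcal{B}_n$ causes no trouble and the convergence is genuinely in the uniform norm on $\mathcal{C}_0(\R)$ rather than merely pointwise. A secondary subtlety, flagged in the text by allowing $L_n$ the cemetery value $+\infty$, is that a priori $L_n$ might not be conservative; but this is automatically resolved by the Feller convergence itself, since the limit $\xi$ is conservative and the functional convergence in $\D(\R_+,\R)$ then guarantees that $L_n$ does not explode in the limit. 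Once generator convergence on the core is in hand, the conclusion is immediate from the standard Trotter--Kurtz type theorem.
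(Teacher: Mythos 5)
Your generator computation (Taylor expansion on $|y|\le \eps$, vague convergence \textbf{(A1)} off a neighbourhood of $0$, the moment asymptotics \textbf{(A2)} for the drift and diffusion parts, and the cubic remainder controlled by $\lim_{\eps\to 0}\limsup_n a_n\int_{-\eps}^{\eps}|y|^3\,\Pi_n^{\ast}(dy)=0$) is exactly the route the paper indicates for \eqref{eq:cvgen}, and the convex-combination observation for $\mathcal{B}_n$ is fine. The genuine gap is in the very last step: you apply the Trotter--Kurtz/Kallenberg theorem \emph{directly to $L_n$}, and you dispose of the explosion issue by saying it is ``automatically resolved by the Feller convergence itself.'' This is circular. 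The hypotheses of \cite[Theorem 19.25]{Kal02} (and of the analogous statement in \cite{EK86}) require the approximating processes to be Feller processes with conservative, strongly continuous semigroups on $\mathcal{C}_0(\R)$; you cannot use the conclusion of the theorem to certify one of its hypotheses. The danger is real: the total jump rate of $L_n$ at $x$ is of order $a_{\fl{ne^x}}\sim a_n e^{\gamma x}$, which is unbounded as $x\to+\infty$, so $L_n$ is a pure-jump chain with unbounded rates that may a priori explode in finite time --- precisely why the paper allows the cemetery value $+\infty$. The paper's proof circumvents this by introducing the truncated process $\widehat L_n$ with generator \eqref{eq:genLntilde}, whose rates are cut off above level $n^2$; this kernel has $\sup_x\mu_n(x,\R)<\infty$ and is weakly continuous, so $\widehat L_n$ is a genuine non-explosive Feller process to which the convergence theorem applies. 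Since $\widehat{\mathcal B}_nf=\mathcal B_nf$ for $x<\ln n$, the two processes can be coupled until they exceed $\ln n$, and because the limit $\xi+x$ stays bounded on compact time intervals, the convergence transfers back to $L_n$. Some such truncation-and-coupling device (or an independent non-explosion argument) is indispensable; without it your proof does not go through.

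A secondary, fixable issue is the scope of your uniformity claim. You check that $m_n(x)=\fl{ne^x}\to\infty$ uniformly for $x$ in the support of $f$, but the norm $\norme{\mathcal B_nf-\mathcal Af}$ is over all of $\R$, and for $x$ near $-\ln n$ the index $\fl{ne^x}$ stays small no matter how large $n$ is; so ``$\sup_{m\ge M}\norme{\mathcal A_mf-\mathcal Af}\to 0$'' does not by itself control the left tail. What is needed, and what the paper supplies, is a tail bound uniform in the index: by the portmanteau theorem and \textbf{(A1)} one finds $M$ with $a_m\Pi_m^{\ast}(\R\setminus(-M,M))<\eps$ for \emph{every} $m\ge 1$, which forces $|\mathcal B_nf(x)|\le 2\eps$ once $x$ lies at distance more than $M$ from the support of $f$, and simultaneously $\mathcal Af(x)\to 0$ there. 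You flag this as ``bookkeeping,'' which is fair, but the statement of the uniform estimate you would need is over all indices $m$, not only the large ones.
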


\begin {proof} Consider the modified continuous-time Markov chain $(\widehat{L}_{n}(t); t \geq 0)$ on $ \R$ which has generator $ \widehat{\mathcal{B}}_{n}$ defined as follows:
\begin{equation}
\label{eq:genLntilde}\widehat{\mathcal{B}}_{n}f(x)= \left( 1- \{ ne^{x}\} \right) \cdot \mathbbm {1}_{ \fl {ne^{x}} \leq n^{2}} \cdot \mathcal{A}_{ \fl {ne^{x}}}f(x)+    \{ ne^{x}\}\cdot \mathbbm {1}_{ \ce {ne^{x}} \leq n^{2}}  \cdot\mathcal{A}_{ \ce {ne^{x}}}  f(x), \quad  f \in \mathcal{C}^{\infty}_{c}( \R), \ x \in \R.
\end{equation}
We stress that $\widehat{\mathcal{B}}_{n}f(x)= {\mathcal{B}}_{n}f(x)$ for all $x < \ln n$, so the processes $L_{n}$ and $\widehat{L}_{n}$ can be coupled so that their trajectories coincide up to the time when they exceed $\ln n$. Therefore, it is  enough to check that for every $x \geq 0$,  $\widehat{L}_{n}$, started from $x$, converges in distribution in $ \D(\R_{+},\R)$ to $ \xi + x$.  

The reason for introducing $ \widehat {L}_{n}$ is that clearly $ \widehat{ L}_{n}$ does not explode, and is in addition a Feller process (note that it is not clear \emph{a priori}  that $L_{n}$ is a Feller process that does not explode).  Indeed, the generator $ \widehat{\mathcal{B}}_{n}$ can be written in the form $ \widehat{\mathcal{B}}_{n}f(x)= \int_{- \infty}^{\infty}(f(x+y)-f(x)) \mu_{n}(x,dy)$ for $x \in \R$ and $f \in \mathcal{C}_{c}^{\infty}(\R)$ and where $\mu_{n}(x,dy)$ is the measure on $ \R$ defined by
$$ \mu_{n}(x,dy)=\left( 1- \{ ne^{x}\} \right) \mathbbm {1}_{ \fl {ne^{x}} \leq n^{2}} {a}_{ \fl {ne^{x}}}  {\Pi^{\ast}}_{ \fl {ne^{x}}}(dy)+    \{ ne^{x}\} \mathbbm {1}_{ \ce {ne^{x}} \leq n^{2}} a_{ \ce {ne^{x}}} \Pi^{\ast}_{ \ce {ne^{x}}}(dy).$$
It is straightforward to  check that $ \sup_{x \in \R} \mu_{n}(x, \R)< \infty$ and that the map $x \rightarrow \mu_{n}(x,dy)$ is weakly continuous. This implies that $\widehat{L}_{n}$ is indeed a Feller process.

By \cite[Theorem 19.25]{Kal02} (see also Theorem 6.1 in \cite[Chapter 1]{EK86}), in order to establish Proposition \ref {prop:cvL} with $L_n$ replaced by $\widehat {L}_n$, it is enough to check that $ \widehat{\mathcal{B}}_{n}f$ converges uniformly to $ \mathcal{A}f$ as $n \rightarrow \infty$ for every  $f \in \mathcal{C}_{c}^{\infty}(\R)$. For the sake of simplicity, we shall further suppose that $|f|\leq 1$. Note that $ \mathcal{A} f(x) \rightarrow0$ as $x \rightarrow  \pm \infty$ since $ \xi$ is a Feller process, and \eqref{eq:cvgen} implies that     $ \widehat{\mathcal{B}}_{n}f$ converges uniformly on compact intervals to $ \mathcal{A}f$ as $n \rightarrow \infty$. Therefore, it is enough to check that
$$ \lim_{M \rightarrow \infty} \lim_{n \rightarrow \infty} \sup_{|x|>M} |\widehat{\mathcal{B}}_{n}f(x)|=0.$$
To this end, fix $ \epsilon>0$. By \eqref{eq:condPi}, we may choose $u_0>0$ such that $ \Pi(\R \backslash (-u_0,u_0)) < \epsilon$. The portmanteau theorem  \cite[Theorem 2.1]{Bil99} and \textbf{(A1)} imply that 
$$ \limsup_{n \rightarrow \infty} a_{n} \cdot \Pi^{\ast}_{n}( \R \backslash (-u_0,u_0))  \leq  \Pi(\R \backslash (-u_0,u_0))<\epsilon.$$
We can therefore find $M>0$ such that $a_{n} \cdot \Pi^{\ast}_{n}( \R \backslash (-M,M))  < \epsilon$ for every $n \geq 1$. Now let $m_{0}<M_{0}$ be such that the support of $f$ is included in $[m_{0},M_{0}]$. Then, for $x>M>M_{0}+u_{0}$,
$$ \widehat{\mathcal{B}}_{n}f(x)= \int_{- \infty}^{\infty}f(x+y) \mathbbm {1}_{x+y \leq M_{0}} \ \mu_{n}(x,dy),$$
so that $ |\widehat{\mathcal{B}}_{n}f(x)| \leq  {a}_{ \fl {ne^{x}}}  {\Pi^{\ast}}_{ \fl {ne^{x}}}((- \infty,M_{0}-M))+  {a}_{ \ce {ne^{x}}}  {\Pi^{\ast}}_{ \ce {ne^{x}}}((-\infty,M_{0}-M)) \leq  2 \epsilon$.  One similarly shows that  $ |\widehat{\mathcal{B}}_{n}f(x)| \leq  2 \epsilon$ for $x<-M<m_{0}-u_{0}$. This completes the proof.
\end {proof}

\subsection{Recovering $X_{n}$ from $L_n$ by a time change}
Unless  otherwise specifically  mentioned, we shall henceforth assume that $L_n$ starts from $0$. 
In order to formulate a connection between $X_{n}$ and $\exp(L_{n})$, it is convenient to introduce some additional randomness. Consider a Poisson process $ ( \mathcal{N}_{n}(t); t \geq 0)$ of intensity $a_{n}$ independent of $X_{n}$, and, for every $t \geq 0$, set
\begin{equation}
\label{eq:timechange}\tau_{n}(t)= \inf \left\{ u \geq  0 ; \int_{0}^{u} \frac{a_{n \exp(L_{n}(s))}}{a_{n}} \ ds >t \right\}.
\end{equation}
We stress that $\tau_n(t)$ is finite a.s.~for all $t\geq 0$. Indeed, if we write $\zeta$ for the possible explosion time of $L_n$ ($\zeta=\infty$ when $L_n$ does not explode), then
$\int_{0}^{\zeta} a_{n \exp(L_{n}(s))}ds=\infty$ almost surely. Specifically, when $n \exp(L_{n})$ is at some state, say $k$, it stays there for an exponential time with parameter $a_k$ and 
the contribution of this portion of time to the integral has thus the standard exponential distribution, which entails our claim. 

\begin{lem}\label {lem:coupling}Assume that $L_{n}(0)=0$. Then we have
\begin{equation}
\label{eq:MC} \left(  \frac{1}{n} X_{n}(  { \mathcal{N}_{n}(t) }) ; t \geq 0 \right)  \quad\mathop{=}^{(d)}  \quad \left( \exp(L_{n}(\tau_{n}(t))); t \geq 0 \right).
\end{equation}
\end {lem}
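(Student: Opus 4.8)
The plan is to recognize that both sides of \eqref{eq:MC} describe the very same continuous-time Markov chain on $\N$: the one started at $n$ whose jump chain has transition probabilities $(p_{j,k})$ and whose holding time in \emph{every} state is exponential of parameter $a_n$ (uniform over states). Dividing by $n$ then yields the claimed identity in law. Uniqueness in law of a continuous-time Markov chain, given its initial state, jump chain and holding rates, is what ties the two descriptions together.

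First I would analyze the right-hand side. Set $Z_n(t)=n\exp(L_n(t))$; since $L_n(0)=0$ we have $Z_n(0)=n\in\N$. On the lattice $\ln(\N/n)$ the interpolation in \eqref{eq:genLn} is inactive: if $ne^x=j\in\N$ then $\{ne^x\}=0$ and $\mathcal{B}_nf(x)=\mathcal{A}_jf(x)=a_j\sum_{k\geq 1}p_{j,k}\bigl(f(x+\ln(k/j))-f(x)\bigr)$. A jump of $L_n$ by $\ln(k/j)$ sends $Z_n$ from $j$ to $k$, so $Z_n$ never leaves $\N$ and is precisely the continuous-time Markov chain that, from state $j$, waits an exponential time of parameter $a_j$ and then jumps to $k$ with probability $p_{j,k}$, exactly as announced after \eqref{eq:genLn}.

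Next I would carry out the time change. Writing $(S_i)_{i\geq 0}$ for the successive states and $(\theta_i)_{i\geq 1}$ for the successive holding times of $Z_n$, so that $\theta_i$ is $\mathrm{Exp}(a_{S_{i-1}})$ given the jump chain, the clock $u\mapsto\int_0^u a_{Z_n(s)}/a_n\,ds$ rescales the $i$-th holding time by the factor $a_{S_{i-1}}/a_n$. Since $\frac{a_{S_{i-1}}}{a_n}\theta_i$ is then $\mathrm{Exp}(a_n)$ conditionally on $(S_i)$, and these rescaled holding times remain independent, the time-changed process $Z_n(\tau_n(\cdot))$ is the continuous-time Markov chain with the same jump chain $(S_i)$ but with holding times that are i.i.d.\ $\mathrm{Exp}(a_n)$ and independent of the jump chain. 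The observation recorded after \eqref{eq:timechange}, namely $\int_0^\zeta a_{Z_n(s)}\,ds=\infty$ almost surely, guarantees that $\tau_n(t)<\zeta$ for every finite $t$, so the time-changed process never reaches the explosion time and is well defined for all $t\geq 0$.

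Finally I would identify the left-hand side. Since $\mathcal{N}_n$ is a Poisson process of rate $a_n$ independent of $X_n$, the process $t\mapsto X_n(\mathcal{N}_n(t))$ is the standard Poissonisation of $X_n$: it holds each state for an i.i.d.\ $\mathrm{Exp}(a_n)$ time and, at each arrival, performs one step of $X_n$, hence jumps from $i$ to $j$ with probability $p_{i,j}$. Both $X_n(\mathcal{N}_n(\cdot))$ and $Z_n(\tau_n(\cdot))$ are therefore non-exploding continuous-time Markov chains on $\N$ started at $n$, with jump probabilities $(p_{j,k})$ and uniform holding rate $a_n$; by uniqueness in law they coincide in distribution, and dividing by $n$ gives \eqref{eq:MC}. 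The step requiring the most care is the time change: checking that rescaling a state-dependent $\mathrm{Exp}(a_{S_{i-1}})$ holding time by $a_{S_{i-1}}/a_n$ produces exactly an $\mathrm{Exp}(a_n)$ variable independent of the jump chain, so that this elementary scaling identity genuinely converts the two differently clocked chains into one and the same law.
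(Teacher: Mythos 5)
Your proof is correct and follows essentially the same route as the paper's: both arguments identify the two sides as continuous-time Markov chains with the same jump chain $(p_{j,k})$ and check that the time change $\tau_n$ converts the state-dependent $\mathrm{Exp}(a_j)$ holding times into uniform $\mathrm{Exp}(a_j\cdot a_n/a_j)=\mathrm{Exp}(a_n)$ holding times, matching the Poissonisation $X_n(\mathcal{N}_n(\cdot))$. You simply spell out in more detail the steps the paper leaves implicit (non-explosion of the time change, independence of the rescaled holding times from the jump chain, and uniqueness in law).
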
 

\begin {proof} Plainly, the two processes appearing in  \eqref{eq:MC} are continuous-time Markov chains, so to prove the statement, we need to check that their respective embedded discrete-time Markov chains (i.e.~jump chains) have the same law, and that the two exponential waiting times at a same state have the same parameter. 

Recall the description made after \eqref{eq:genLn} of the process $n \exp(L_{n})$ started at an integer value. We see in particular that the two jump chains  in \eqref{eq:MC} have indeed the same law. Then fix some $j\in\N$ and recall that the waiting time of $L_{n}$ at state $\ln (j/n)$ is distributed according to an exponential random variable of parameter $a_{j}$. It follows readily from the definition of the time-change $\tau_n$ that the waiting time of $\exp(L_{n}(\tau_n(\cdot))$ at state $j/n$ is distributed according to an exponential random variable of parameter $a_{j}\times \frac{a_n}{a_j}= a_n$. 
This proves our claim.
\end {proof}

\section {Scaling limits of the Markov chain $X_{n}$}
\label {sec:scaling}

\subsection{The non-absorbed case: proof of Theorem \ref {thm:main1}}
\label {sec:nonabsorbed}

We now prove  Theorem \ref {thm:main1} by establishing that
\begin{equation}
\label{eq:case1} \left( \frac{ {X} _{n}( \mathcal{N} _{n} (t) )}{n} ; t \geq 0 \right)   \quad\mathop{\longrightarrow}^{(d)}_{n \rightarrow \infty} \quad (Y(t); t\geq 0)
\end{equation}
in $\D(\R_{+},\R)$. Since by the functional law of large numbers $(\mathcal{N} _{n}(t)/ a_{n}; t \geq 0)$ converges in probability to the identity uniformly on compact sets, Theorem \ref {thm:main1} will follow from \eqref{eq:case1} by standard properties of the Skorokhod topology (see e.g.~\cite[VI. Theorem 1.14]{JS03}).

\begin {proof}[Proof of Theorem \ref {thm:main1}]
Assume that \textbf{(A1)}, \textbf{(A2)} hold and that $ \xi$ does not drift to $- \infty$. In particular, recall from the Introduction that  we have $I_{ \infty}= \infty$ and the process  $Y(t)= \exp ( \xi(\tau(t)))$ remains bounded away from $0$ for all $t \geq 0$. 

By standard properties of regularly varying functions (see e.g.~\cite[Theorem 1.5.2]{BGT87}), $x \mapsto a_{\fl{nx}}/a_{n}$ converges uniformly on compact subsets of $ \R_{+}$ to $x \mapsto x^{\gamma}$ as $n \rightarrow \infty$. Recall that $L_{n}(0)=0$. Then by Proposition \ref {prop:cvL} and standard properties of the Skorokhod topology (see e.g.~\cite[VI. Theorem 1.14]{JS03}), it follows that $$ \left( \frac{a_{n \exp(L_{n}(s))}}{a_{n}}; s \geq 0 \right)   \quad\mathop{\longrightarrow}^{(d)}_{n \rightarrow \infty} \quad \left(  \exp ( \gamma \xi(s)); s \geq 0 \right)$$
in $\D(\R_{+},\R)$. This implies that
\begin{equation}
\label{eq:int}\left( \int_{0}^{u}  \frac{a_{n \exp(L_{n}(s))}}{a_{n}} ds ; u \geq 0 \right)   \quad\mathop{\longrightarrow}^{(d)}_{n \rightarrow \infty} \quad   \left(    \int_{0}^{u}   \exp(\gamma \xi(s)) ds ; u \geq 0 \right),
\end{equation}
in $ \mathcal{C}(\R_{+},\R)$, which is the space of real-valued continuous functions on $ \R_{+}$ equipped with the topology of uniform convergence on compact sets. Since the two processes appearing in \eqref{eq:int}  are almost surely (strictly) increasing in $u$ and  $I_{\infty}=\infty$, $ \tau$ is almost surely (strictly) increasing and continuous on $ \R_{+}$. It is then a simple matter to see that \eqref{eq:int}  in turn implies that $\displaystyle \tau_{n}$ converges in distribution to $\tau$ in $ \mathcal{C}(\R_{+},\R)$.  Therefore, by applying Proposition \ref {prop:cvL} once again, we finally get that 
$$\left( \exp(L_{n}(\tau_{n}(t))); t \geq 0 \right)  \quad\mathop{\longrightarrow}^{(d)}_{n \rightarrow \infty} \quad  \left(  \exp ( \xi( \tau(t))) ; t \geq 0\right)=Y$$
in $\D(\R_{+},\R)$. 
 By Lemma \ref{lem:coupling}, this establishes \eqref{eq:case1} and completes the proof.
\end {proof}

\subsection {Foster--Lyapounov type estimates}
\label {sec:FL}

Before tackling the proof of Theorem \ref {thm:main2}, we start by exploring several preliminary consequences of \textbf{(A3)}, which will also be useful in Section \ref {sec:abs}.

In the  irreducible case,  Foster \cite{Fos53} showed that the Markov chain $X$ is positive recurrent if and only if there exists a finite set $S_{0} \subset \N$, a function $f: \N \rightarrow \R_{+}$ and $ \epsilon>0$ such that
\begin{equation}
\label{eq:FL} \textrm {for every } i \in S_{0}, \quad \sum_{j \geq 1} p_{i,j} f(j)< \infty, \qquad \textrm {and} \qquad \textrm {for every } i \not \in S_{0}, \quad \sum_{j \geq 1} p_{i,j} f(j) \leq  f(i)-\epsilon.
\end{equation}
The map $f: \N \rightarrow \R_{+}$ is commonly referred to as a Foster--Lyapounov function. The conditions \eqref{eq:FL} may be rewritten in the equivalent forms
$$\textrm {for every } i \in S_{0},  \quad \Es {f(X_{i}(1))}< \infty, \qquad \textrm {and} \qquad \textrm {for every } i \not \in S_{0}, \quad \Es {f(X_{i}(1))-f(i)} \leq  -\epsilon.$$
Therefore, Foster--Lyapounov functions allow to construct nonnegative supermartingales, and the criterion may be interpreted as a stochastic drift condition in analogy with Lyapounov's stability criteria for ordinary differential equations. A similar criterion exists for recurrence instead of positive recurrence (see e.g.~\cite[Chapter 5]{Bre99} and \cite {MT09}).

In our setting, we shall see that \textbf{(A3)} yields Foster--Lyapounov functions of the form $f(x)=x^{\beta}$ for certain values of $ \beta>0$. For $i,K \geq 1$, recall that  $A^{(K)}_{i}= \inf\{j \geq 1; X_i(j) \leq K\}$ denotes the first return time of  $X_{i}$ to $ \{ 1,2, \ldots,K\} $. 
 
\begin {lem}\label{lem:utile} Assume that \textbf{(A1)}, \textbf{(A2)}, \textbf{(A3)} hold and that the L\'evy process $ \xi$ drifts to $- \infty$. Then:
\begin{enumerate}
\item[(i)] There exists $ 0<\beta_{0} <\beta$ such that $ \Psi(\beta_{0})<0$.
\item[(ii)] For all such $\beta_0$, we have 
\begin{equation}
\label{eq:claim} a_n \int_{0}^{\infty} \left( e^{\beta_{0}x}-1 \right)  \Pi_n^{\ast}(dx)  \quad\mathop{\longrightarrow}_{n \rightarrow \infty} \quad  \Psi(\beta_{0})<0.
\end{equation}
\item[(iii)]  Let $M \geq K$ be such that $a_n \int_{0}^{\infty} \left( e^{\beta_{0}x}-1 \right)  \Pi_n^{\ast}(dx)  \leq 0$ for every $n \geq M$. Then, for every $i \geq M$, the process defined by $ \mathcal{M}_{i}(\cdot)=X_{i}(\cdot \wedge A^{(M)}_{i})^{\beta_{0}}$ is a positive supermartingale (for the  canonical filtration of $X_{i}$). 
\item[(iv)] Almost surely, $ A^{(K)}_{i}< \infty$ for every $i \geq 1$.
\end{enumerate}
\end {lem}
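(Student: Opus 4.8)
For (i), I would study the convex function $\Psi$ on the interval $[0,\beta]$. As observed just after \textbf{(A3)}, assumptions \textbf{(A1)} and \textbf{(A3)} force $\int_{1}^{\infty}e^{\beta x}\,\Pi(dx)<\infty$, and together with \eqref{eq:condPi} this makes $\Psi$ finite on $[0,\beta]$, with $\Psi(0)=0$. The plan is to compute the right derivative at the origin: dividing the integrand of $\Psi$ by $\lambda$ and letting $\lambda\downarrow 0$, monotone convergence gives $\Psi'(0^{+})=b+\int_{|x|>1}x\,\Pi(dx)=\mathbb{E}[\xi(1)]$, a quantity lying in $[-\infty,\infty)$ (finiteness of the positive part again coming from \textbf{(A3)}). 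Since $\xi$ drifts to $-\infty$ and has summable positive part, its mean is negative, possibly $-\infty$, so $\Psi'(0^{+})<0$. Convexity, $\Psi(0)=0$ and $\Psi'(0^{+})<0$ then yield $\Psi<0$ on a right neighbourhood of $0$, and any $\beta_{0}\in(0,\beta)$ inside that neighbourhood does the job.

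For (ii), the quantity in \eqref{eq:claim} is the Laplace exponent of the compound Poisson process $\xi_{n}$ evaluated at $\beta_{0}$, so the task is to upgrade the generator convergence \eqref{eq:cvgen} from compactly supported test functions to the exponential $x\mapsto e^{\beta_{0}x}$. I would write $e^{\beta_{0}x}-1=(e^{\beta_{0}x}-1-\beta_{0}x\mathbbm{1}_{|x|\le 1})+\beta_{0}x\mathbbm{1}_{|x|\le 1}$; the second piece converges to $\beta_{0}b$ by \textbf{(A2)}. In the first piece, the part over $\{|x|\le 1\}$ is treated via $e^{\beta_{0}x}-1-\beta_{0}x=\tfrac12\beta_{0}^{2}x^{2}+O(|x|^{3})$, using \textbf{(A2)} on the $x^{2}$-term and the vanishing of $\lim_{\epsilon}\lim_{n}a_{n}\int_{-\epsilon}^{\epsilon}y^{3}\,\Pi_{n}^{\ast}(dy)$ together with \textbf{(A1)} on the cubic remainder. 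The genuinely delicate part, and the main obstacle, is the positive tail $\{x>1\}$, where $e^{\beta_{0}x}-1$ is unbounded and \textbf{(A1)} alone fails: I would truncate at a level $R$, apply \textbf{(A1)} on $[1,R]$, and bound the tail uniformly in $n$ by $a_{n}\int_{R}^{\infty}e^{\beta_{0}x}\,\Pi_{n}^{\ast}(dx)\le e^{-(\beta-\beta_{0})R}\,a_{n}\int_{R}^{\infty}e^{\beta x}\,\Pi_{n}^{\ast}(dx)$, which tends to $0$ as $R\to\infty$ by \textbf{(A3)} precisely because $\beta_{0}<\beta$. Reassembling the pieces produces the limit $\Psi(\beta_{0})$, negative for the $\beta_{0}$ of (i).

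Part (iii) then follows from the Markov property. The convergence \eqref{eq:claim} to $\Psi(\beta_{0})<0$ guarantees an $M\ge K$ with the stated property for all $n\ge M$; since $e^{\beta_{0}x}-1\le 0$ on $(-\infty,0]$, this controls the full exponent $a_{n}\int_{\mathbb{R}}(e^{\beta_{0}x}-1)\,\Pi_{n}^{\ast}(dx)\le 0$, equivalently $\mathbb{E}[(X_{n}(1)/n)^{\beta_{0}}]=\int_{\mathbb{R}}e^{\beta_{0}x}\,\Pi_{n}^{\ast}(dx)\le 1$, and in particular $\mathbb{E}[X_{n}(1)^{\beta_{0}}]\le n^{\beta_{0}}<\infty$, for every $n\ge M$. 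Writing $(\mathcal{F}_{k})$ for the canonical filtration, on $\{A_{i}^{(M)}>k\}$ the chain sits at some state $X_{i}(k)=m\ge M+1$, so by the Markov property $\mathbb{E}[X_{i}(k+1)^{\beta_{0}}\mid\mathcal{F}_{k}]=m^{\beta_{0}}\,\mathbb{E}[(X_{m}(1)/m)^{\beta_{0}}]\le m^{\beta_{0}}=\mathcal{M}_{i}(k)$, while on $\{A_{i}^{(M)}\le k\}$ the process $\mathcal{M}_{i}$ is frozen. Hence $\mathcal{M}_{i}$ is a nonnegative integrable supermartingale.

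Finally, for (iv) I would use that a nonnegative supermartingale converges almost surely. Fix $i\ge M$; on $\{A_{i}^{(M)}=\infty\}$ one has $\mathcal{M}_{i}(k)=X_{i}(k)^{\beta_{0}}$ for all $k$, so $X_{i}(k)$ converges almost surely to a finite limit. Being integer valued, the chain is then eventually constant at some state $\ell\ge M+1$, which forces $p_{\ell,\ell}=1$; but such an absorbing state would make $\{1,\dots,K\}$ inaccessible from $\ell$, contradicting the standing accessibility assumption. Therefore $\Pr{A_{i}^{(M)}=\infty}=0$, and by the strong Markov property the same holds from every state $\ge M+1$ visited subsequently, so on $\{A_{i}^{(K)}=\infty\}$ the chain returns to the finite set $\{K+1,\dots,M\}$ infinitely often. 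To convert $A^{(M)}$ into $A^{(K)}$, I would invoke accessibility uniformly on that finite set: from each of its states there is a positive probability $\delta$ of reaching $\{1,\dots,K\}$ within a bounded number of steps, so infinitely many returns to some state of $\{K+1,\dots,M\}$ would, by a conditional Borel--Cantelli argument, lead into $\{1,\dots,K\}$ almost surely. This rules out $\{A_{i}^{(K)}=\infty\}$ and gives the claim for every $i\ge 1$, the values $i\le M$ being covered by the same return-and-descent reasoning. The two nontrivial points are thus the uniform tail control in (ii) and this final passage from $A^{(M)}$ to $A^{(K)}$.
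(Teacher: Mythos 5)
Your proof is correct and follows essentially the same route as the paper's: $\Psi'(0^{+})=\E{\xi(1)}<0$ for (i), convergence of the Laplace exponents of the compound Poisson processes $\xi_{n}$ with \textbf{(A3)} controlling the positive tail for (ii), the one-step drift computation for (iii), and supermartingale convergence plus accessibility for (iv). The only differences are cosmetic: in (ii) the paper passes through weak convergence of $\xi_{n}(1)$ to $\xi(1)$ and dominated convergence (i.e.\ uniform integrability) instead of your direct truncation estimate, and in (iv) you spell out the eventual-constancy/absorbing-state contradiction and the conditional Borel--Cantelli passage from $A^{(M)}$ to $A^{(K)}$ that the paper delegates to an adaptation of Foster's criterion from Br\'emaud.
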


\begin {proof}
 By \textbf{(A1)} and \textbf{(A3)}, we have $ \int_{1}^{\infty} x \ \Pi(dx)< \infty$. Since $ \xi$ drifts to $- \infty$, by \cite[Theorem 1]{BY05}, we have    
$$b+ \int_{|x|>1} x \Pi(dx)\in[-\infty, 0).$$
In particular, $ \Psi'(0+)=b+ \int_{|x|>1} x \Pi(dx)\in[-\infty, 0)$, so that there exists $ \beta_{0}>0$ such that $ \Psi( \beta_{0})<0$. This proves (i).

For the second assertion, recall from Section~\ref {sec:compound} that $ \xi_n$ is  a compound Poisson Process with L\'evy measure $a_n \cdot \Pi_n^{\ast}$ that converges in distribution to $ \xi$ as $n \rightarrow \infty$. By dominated convergence, this implies that $ \E {e^{\beta_0 \xi_n(1)}} \rightarrow  \E {e^{\beta_0 \xi(1)}}$ as $n \rightarrow \infty$, or, equivalently, that \eqref{eq:claim} holds. 

For (iii), note that for $i \geq M$,
\begin{equation}
\label{eq:fos} \frac{a_{i}}{i^{\beta_{0}}} \cdot \Es {X_{i}(1)^{\beta_{0}}-X_{i}(0)^{\beta_{0}}}=   \frac{a_{i}}{i^{\beta_{0}}} \cdot \ \sum_{k=1} ^{\infty} p_{i,k} \left( k^{\beta_{0}}-i^{\beta_{0}} \right) =a_{i}\cdot \int_{0}^{\infty} \left( e^{\beta_{0}x}-1 \right)  \Pi_{i}^{\ast}(dx)\leq 0.
\end{equation}
Hence  $\Es {X_i(1)^{\beta_{0}}} \leq  \Es{X_i(0)^{\beta_{0}}}$ for every $i \geq M$, which implies that $ \mathcal{M}_{i}$ is a positive supermartingale.

The last assertion is an analog of Foster's criterion of recurrence for irreducible Markov chains. Even though we do not assume irreducibility here, it is a simple matter to adapt the proof of Theorem 3.5 in \cite[Chapter 5]{Bre99} in our case. Since $ \mathcal{M}_{i}$ is a positive supermartingale, it converges almost surely to a finite limit, which implies that $ A^{(M)}_{i}< \infty$ almost surely for every $i \geq M+1$, and therefore $ A^{(M)}_{i}< \infty$  for every $i \geq 1$ (by an application of the Markov property at time $1$). Since $ \{ 1,2, \ldots,K\} $ is accessible by $X_{n}$ for every $n \geq 1$,  it readily follows  that $A^{(K)}_{i}< \infty$ almost surely for every $i \geq K+1$.  
\end {proof}

We point out that the recurrence of the discrete-time chain $X_n$ entails that the continuous-time process $L_n$ defined in Section \ref {sec:compound} does not explode (and, as a matter of fact, is also recurrent). If the stronger assumptions \textbf{(A4)} and \eqref{eq:H42} hold instead of \textbf{(A3)}, roughly speaking the Markov chain becomes positive recurrent (note that $\xi$ drifts to $-\infty$ when \textbf{(A4)} holds):

\begin {lem}\label{lem:utile2}  Assume that \textbf{(A1)}, \textbf{(A2)},  \textbf{(A4)} and  \eqref{eq:H42} hold. Then:
\begin{enumerate}
\item[(i)] There exists  an integer $M \geq K$ and a constant $c>0$ such that, for every $n \geq M$,
\begin{equation}
\label{eq:recpos}a_n \cdot \int_{-\infty}^{\infty} \left( e^{\beta_{0}x}-1 \right)  \Pi_n^{\ast}(dx) \quad \leq \quad -c.
\end{equation}
\item[(ii)] For every $n\geq K+1$, $ \Es{A^{(K)}_{n}}< \infty$.
\item[(iii)] Assume  that, in addition, \textbf{(A5)} holds. Then  for every $n \geq 1$, $ \Es{A^{(K)}_{n}}< \infty$ .
\end{enumerate}
\end {lem}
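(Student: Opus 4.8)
The plan is to treat the three assertions in order, the first being the analytic heart and the other two consequences of it via Foster--Lyapounov theory. Throughout I will use that \textbf{(A4)} contains \textbf{(A3)} with $\beta=\beta_0$ and forces $\Psi'(0+)<0$ by convexity (since $\Psi(0)=0>\Psi(\beta_0)$), so that $\xi$ drifts to $-\infty$ and Lemma~\ref{lem:utile} is available.

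For (i), I would recognise the left-hand side of \eqref{eq:recpos} as a cumulant. Since $\xi_n$ is the compound Poisson process with L\'evy measure $a_n\cdot\Pi_n^{\ast}$, one has $\Es{e^{\beta_0\xi_n(1)}}=\exp(a_n\int_{-\infty}^{\infty}(e^{\beta_0 x}-1)\,\Pi_n^{\ast}(dx))$, so that $a_n\int_{-\infty}^{\infty}(e^{\beta_0 x}-1)\,\Pi_n^{\ast}(dx)=\log\Es{e^{\beta_0\xi_n(1)}}$, while $\log\Es{e^{\beta_0\xi(1)}}=\Psi(\beta_0)$. Recalling from Section~\ref{sec:compound} that $\xi_n\to\xi$ in distribution under \textbf{(A1)} and \textbf{(A2)}, the whole point is to promote this weak convergence to convergence of the $\beta_0$-exponential moments, exactly as in the proof of Lemma~\ref{lem:utile}(ii) but now at the critical exponent $\beta_0$ supplied by \textbf{(A4)}. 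The uniform bound $\sup_n a_n\int_1^{\infty}e^{\beta_0 x}\,\Pi_n^{\ast}(dx)<\infty$ provides the uniform integrability of the family $e^{\beta_0\xi_n(1)}$ needed to pass to the limit (the negative half-line causes no trouble since $e^{\beta_0 x}\le 1$ there), whence $\Es{e^{\beta_0\xi_n(1)}}\to e^{\Psi(\beta_0)}$ and therefore $a_n\int_{-\infty}^{\infty}(e^{\beta_0 x}-1)\,\Pi_n^{\ast}(dx)\to\Psi(\beta_0)<0$. Taking $c=|\Psi(\beta_0)|/2$ and $M\ge K$ large enough then yields \eqref{eq:recpos}. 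I expect the control of the upper tail at the exact exponent $\beta_0$, that is, ruling out an escape of exponential mass towards $+\infty$, to be the genuinely delicate point, and this is precisely where the first inequality in \textbf{(A4)} is used.

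For (ii), I would feed (i) into a Foster--Lyapounov argument with $V(x)=x^{\beta_0}$. By the identity in \eqref{eq:fos}, for $i\ge M$ one has $\Es{X_i(1)^{\beta_0}}-i^{\beta_0}=i^{\beta_0}a_i^{-1}\,[a_i\int_{-\infty}^{\infty}(e^{\beta_0 x}-1)\,\Pi_i^{\ast}(dx)]\le -c\,i^{\beta_0}/a_i$; since $a_i$ is regularly varying of index $\gamma$ and $\beta_0>\gamma$, the ratio $i^{\beta_0}/a_i\to\infty$, so after enlarging $M$ we obtain a uniform drift $\Es{X_i(1)^{\beta_0}}-i^{\beta_0}\le-\epsilon$ for $i>M$, the one-step expectations being finite by \eqref{eq:H42} (as $i>M\ge K$). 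Foster's criterion for finite mean passage times (as in \cite[Chapter~5]{Bre99}) then gives $\Es{A_n^{(M)}}\le n^{\beta_0}/\epsilon$ for $n>M$. It remains to bridge from $\{1,\ldots,M\}$ down to $\{1,\ldots,K\}$, in the spirit of Lemma~\ref{lem:utile}(iv) but quantitatively: every excursion of $X$ above $M$ returns to $\{1,\ldots,M\}$ in expected time at most $\ell^{\beta_0}/\epsilon$, where $\ell$ is the level at which it lands, and the $\beta_0$-moment of that landing level is bounded by a one-step moment $\Es{X_g(1)^{\beta_0}}<\infty$ (with $g\in\{K+1,\ldots,M\}$) furnished by \eqref{eq:H42}, so excursions have uniformly bounded expected length; moreover the trace of $X$ on the finite set $\{1,\ldots,M\}$ reaches $\{1,\ldots,K\}$ almost surely by Lemma~\ref{lem:utile}(iv), hence in a number of steps of finite expectation, the substochastic restriction of a finite kernel to $\{K+1,\ldots,M\}$ having spectral radius strictly less than $1$. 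A Wald-type bound combining these two facts gives $\Es{A_n^{(K)}}<\infty$ for every $n\ge K+1$, and in fact an estimate of the form $\Es{A_n^{(K)}}\le C_1 n^{\beta_0}+C_2$.

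Finally, (iii) follows from (ii) by a first-step decomposition, noting that \textbf{(A5)} already contains \textbf{(A4)} and \eqref{eq:H42}, so that (ii) applies and the estimate $\Es{A_k^{(K)}}\le C_1 k^{\beta_0}+C_2$ holds for all $k\ge K+1$. For $n\ge K+1$ the claim is (ii) itself, while for $1\le n\le K$ conditioning on $X_n(1)$ gives $\Es{A_n^{(K)}}=1+\sum_{k>K}p_{n,k}\,\Es{A_k^{(K)}}$, and inserting the above estimate yields $\Es{A_n^{(K)}}\le 1+C_1\,\Es{X_n(1)^{\beta_0}}+C_2$, which is finite by the moment condition $\Es{X_n(1)^{\beta_0}}<\infty$ in \textbf{(A5)}. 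The only analytically subtle step is the uniform integrability at the critical exponent in (i); the remainder is bookkeeping around the finite exceptional set $\{1,\ldots,M\}$.
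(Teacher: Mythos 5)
Your parts (ii) and (iii) are correct but take a genuinely different route from the paper. The paper handles the possible non-irreducibility of the chain by modifying the transition probabilities on $\{1,\dots,K\}$ so that the modified chain is irreducible with finite one-step $\beta_0$-moments everywhere, and then invokes Foster's theorem (Theorem 1.1 in Chapter 5 of \cite{Bre99}) with $h(i)=i^{\beta_0}$ and $F=\{1,\dots,M\}$; since the two chains coincide until the first entrance in $\{1,\dots,K\}$ when started from $n\geq K+1$, (ii) follows, and under \textbf{(A5)} no modification is needed. You instead run the optional-stopping form of Foster's criterion directly (no irreducibility required), getting the quantitative bound $\Es{A_n^{(M)}}\leq n^{\beta_0}/\epsilon$, and then bridge from $\{1,\dots,M\}$ down to $\{1,\dots,K\}$ by an excursion decomposition of the trace chain on the finite set. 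This is more work, but it buys you the explicit estimate $\Es{A_n^{(K)}}\leq C_1 n^{\beta_0}+C_2$, which is exactly what makes your first-step proof of (iii) immediate and self-contained (the paper only obtains comparable control later, in Remark \ref{rem:inf}, via Theorem \ref{thm:absorption}). Your observation that $i^{\beta_0}/a_i\to\infty$ because $\beta_0>\gamma$, upgrading the drift $-c\,i^{\beta_0}/a_i$ from \eqref{eq:fos} to a uniform $-\epsilon$, is the right mechanism.

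The one genuine gap is in (i), precisely at the point you flag as delicate. You assert that $\limsup_n a_n\int_1^\infty e^{\beta_0x}\,\Pi_n^*(dx)<\infty$ ``provides the uniform integrability'' of $\{e^{\beta_0\xi_n(1)}\}$. It does not: a uniform bound on first moments never yields uniform integrability by itself, and here mass can genuinely escape to $+\infty$ at the critical exponent. For instance, an atom of $a_n\Pi_n^*$ at $x_n\to\infty$ with weight $Ce^{-\beta_0 x_n}$ is invisible in the vague limit \textbf{(A1)} and compatible with the bound in \textbf{(A4)}, yet contributes $C(1-e^{-\beta_0 x_n})\to C$ to $a_n\int(e^{\beta_0x}-1)\,\Pi_n^*(dx)$, so the limsup could exceed $\Psi(\beta_0)$ and even become positive. (In Lemma \ref{lem:utile} the exponent is taken strictly below the $\beta$ of \textbf{(A3)}, so there the domination is genuine; at the exact $\beta_0$ of \textbf{(A4)} it is not.) The standard repair, which suffices for everything downstream, is to work at any $\beta_1\in(\gamma,\beta_0)$: the \textbf{(A4)} bound together with \textbf{(A2)} gives $\sup_n\Es{e^{\beta_0\xi_n(1)}}<\infty$, i.e.\ $\{e^{\beta_1\xi_n(1)}\}$ is bounded in $L^{\beta_0/\beta_1}$ with $\beta_0/\beta_1>1$ and hence genuinely uniformly integrable, while convexity of $\Psi$ with $\Psi(0)=0>\Psi(\beta_0)$ gives $\Psi(\beta_1)<0$. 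You should therefore state \eqref{eq:recpos} for such a $\beta_1$ and carry that exponent through (ii) and (iii), where only $\beta_1>\gamma$ and the finiteness of the one-step $\beta_1$-moments (implied by \eqref{eq:H42}) are used.
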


\begin {proof} The proof of (i) is similar to that of Lemma \ref{lem:utile}. 
For the other assertions, it is convenient to consider the following modification of the Markov chain. We introduce probability transitions $p'_{n,k}$ such that $p'_{n,k}= p_{n,k}$ for all $k\geq 1$ and $n>K$,
and for $n=1, \ldots, K$, we  choose the $p'_{n,k}$ such that $p'_{n,k}>0$ for all $k\geq 1$ and $\sum_{k \geq 1} k^{\beta_{0}} \cdot p'_{n,k}< \infty$. In other words, the modified chain with transition probabilities $p'_{n,k}$, say $X'_n$, then fulfills \textbf{(A5)}. 

The chain  $X'_n$  is then irreducible (recall that, by assumption, $\{1,\ldots, K\}$ is accessible by $X_n$ for every $n \in \N$) and fulfills the assumptions of Foster's Theorem. See 
e.g.~Theorem 1.1 in Chapter 5 of \cite{Bre99} applied with $h(i)=i^{\beta_{0}}$ and $F=\{1,\ldots , M\}$. Hence  $X'_n$ is positive recurrent, and as a consequence,  
the first entrance time of $X'_n$ in $\{1,\ldots, K\}$ has finite expectation for every $n \in \N$. But by construction, for every $n\geq K+1$, the chains $X_n$ and $X'_n$ coincide until the first entrance in $\{1,\ldots, K\}$; this proves (ii). Finally, when \textbf{(A5)} holds, there is no need to modify $X_n$ and the preceding argument shows that $ \Es{A^{(K)}_{n}}< \infty$ for all $n\geq 1$. 
 \end {proof}
 
 \begin {rem}We will later check that under the assumptions of Lemma \ref {lem:utile2}, we may have  $ \E{A^{(K)}_{i}}= \infty$ for some $1 \leq i \leq K$  if  \textbf{(A4)} holds but not \textbf{(A5)} (see Remark \ref {rem:inf}).
\end {rem}

Recall that $L_n$ denotes the auxiliary continuous-time Markov chain which has been defined in Section \ref {sec:compound} with $L_n(0)=0$.

\begin{cor} \label{cor:super} Keep the same assumptions and notation as in Lemma \ref{lem:utile2}, and introduce the first passage time
$$\alpha^{(M)}_n=\inf\{t\geq 0; \ n \exp(L_n(t))\leq M\}.$$
The process
$$\exp\left( \beta_0 L_n(t\wedge \alpha_n^{(M)}) + c (t\wedge \alpha^{(M)}_n)\right),\qquad t\geq 0,$$
is then a supermartingale.
\end{cor}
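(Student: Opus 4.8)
The plan is to exploit that the stated process is, up to the irrelevant multiplicative constant $n^{-\beta_0}$, the continuous-time analogue of the supermartingale $\mathcal M_i$ of Lemma \ref{lem:utile}(iii), now equipped with the exponential weight $e^{ct}$ that encodes the strict negative drift of \eqref{eq:recpos}. Since $L_n(0)=0$, recall from the discussion following \eqref{eq:genLn} that $J_n(t):=n\exp(L_n(t))$ is a continuous-time Markov chain started at the integer $n$ which remains integer-valued: from a state $j\in\N$ it waits an exponential time of parameter $a_j$ and then jumps to $k$ with probability $p_{j,k}$. Consequently the fractional part $\{ne^{L_n(t)}\}$ vanishes identically along the trajectory, the interpolation in \eqref{eq:genLn} never intervenes, and one simply has $\mathcal B_n f(L_n(t))=\mathcal A_{J_n(t)}f(L_n(t))$. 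Writing the target process as $V_n(t)=\exp(\beta_0 L_n(t\wedge\alpha^{(M)}_n)+c(t\wedge\alpha^{(M)}_n))=n^{-\beta_0}J_n(t\wedge\alpha^{(M)}_n)^{\beta_0}e^{c(t\wedge\alpha^{(M)}_n)}$, it then suffices to establish the supermartingale property of $V_n$ for the canonical filtration of $L_n$.

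Next I would apply Dynkin's formula to the space-time function $G(t,x)=e^{ct+\beta_0 x}$ along $L_n$ stopped at $\alpha^{(M)}_n$, producing a decomposition $V_n(t)=V_n(0)+(\text{local martingale})+\int_0^{t\wedge\alpha^{(M)}_n}(\partial_s+\mathcal B_n)G(s,L_n(s))\,ds$. Using $\mathcal A_j[e^{\beta_0\cdot}](x)=a_j e^{\beta_0 x}\int_{-\infty}^{\infty}(e^{\beta_0 y}-1)\,\Pi_j^\ast(dy)$ together with the observation that for $s<\alpha^{(M)}_n$ the state $J_n(s)$ is an integer $\geq M+1$, the drift integrand equals $e^{cs}e^{\beta_0 L_n(s)}\big(c+a_{J_n(s)}\int_{-\infty}^{\infty}(e^{\beta_0 y}-1)\Pi^\ast_{J_n(s)}(dy)\big)$, which is $\leq 0$ by \eqref{eq:recpos} of Lemma \ref{lem:utile2} (valid for every index $\geq M$). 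The crux is this exact cancellation between the deterministic growth rate $c$ coming from the factor $e^{ct}$ and the jump drift, which \eqref{eq:recpos} bounds by $-c$; it shows that $V_n$ equals a local martingale plus a nonincreasing process, hence is a nonnegative local supermartingale.

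The main obstacle is purely one of integrability: since $x\mapsto e^{\beta_0 x}$ is unbounded and the chain may make large upward jumps before reaching $\{1,\dots,M\}$, the function $G$ lies outside the core $\mathcal C^\infty_c(\R)$ and the martingale above is only local a priori. I would localize by $\sigma_N=\inf\{t\geq 0;\ J_n(t)\geq N\}$: on $[0,\alpha^{(M)}_n\wedge\sigma_N)$ the chain visits only the finitely many states in $\{M+1,\dots,N-1\}$, so its jump rates are bounded, Dynkin's formula yields a genuine martingale, and \eqref{eq:H42} — which guarantees $\sum_{k\geq 1}k^{\beta_0}p_{j,k}<\infty$ for every $j\geq K+1$, in particular for the relevant states $j\geq M+1$ — controls the expected $\beta_0$-size of the overshoot at $\sigma_N$ and thereby makes $V_n(\cdot\wedge\sigma_N)$ a true supermartingale. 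Since $\xi$ drifts to $-\infty$ under \textbf{(A4)}, the chain is recurrent (Lemma \ref{lem:utile}) and hence $L_n$ does not explode, so $\sigma_N\uparrow\infty$ almost surely. Finally, as $V_n\geq 0$, I would let $N\to\infty$ and invoke conditional Fatou to obtain $\Es{V_n(t)\mid\mathcal F_s}\leq V_n(s)$ — that is, the nonnegative local supermartingale $V_n$ is a supermartingale. This passage to the limit, and in particular the finiteness of the overshoot moment, is exactly where condition \eqref{eq:H42} enters.
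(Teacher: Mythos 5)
Your proof is correct and follows essentially the same route as the paper: localize to a bounded set of states (your $\sigma_N$ together with $\alpha^{(M)}_n$ plays the role of the paper's exit time $\alpha^{(M,R)}_n$ from $\{M+1,\dots,R\}$), apply the generator to $e^{\beta_0 y}$ so that \eqref{eq:recpos} gives the drift bound $\mathcal{G}f\leq -cf$ cancelling the factor $e^{ct}$, then let the localization level tend to infinity using non-explosion of $L_n$ and conditional Fatou. Your explicit remark that \eqref{eq:H42} is what makes the generator computation (and the overshoot moment) finite at each visited state is a point the paper leaves implicit, but the argument is the same.
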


\begin{proof}
Let $R>M$ be arbitrarily large; we shall prove our assertion with $\alpha^{(M)}_n$ replaced by 
$$\alpha^{(M,R)}_n=\inf\{t\geq 0 ; n \exp(L_n(t))\not\in \{ M +1, \ldots, R\} \}.$$
The process $L_n$ stopped at time $\alpha^{(M,R)}_n$ is a Feller process with values in $ - \ln n+ \ln \N$ , and it follows from \eqref{eq:genLn} that its infinitesimal generator, say ${\mathcal G}$, is given by
$${\mathcal G}f(x)=a_{ne^x}\int_{-\infty}^{\infty}\left( f(x+y)-f(x) \right) \Pi^\ast_{ne^x}(dy)$$
for every $x$ such that $ne^x\in \{ M +1, \ldots, R\}$. 
Applying this for $f(y)=\exp(\beta_0 y)$, we get from Lemma \ref{lem:utile2} (i) that ${\mathcal G}f(x)\leq -cf(x)$, which entails that 
$$f\left(   L_n(t\wedge \alpha_n^{(M,R)}) \right) \exp\left( c (t\wedge \alpha^{(M,R)}_n)\right),\qquad t\geq 0$$
is indeed a supermartingale. To conclude the proof, it suffices to let $R\to \infty$, recall that $L_n$ does not explode,  and apply the (conditional) Fatou Lemma.
\end{proof}

We  now establish two useful lemmas based on the Foster--Lyapounov estimates of Lemma \ref {lem:utile}. The first one is classical and states that if the L\'evy process $\xi$ drifts to $-\infty$ and its L\'evy measure $\Pi$ has finite exponential moments, then its overall supremum has an exponentially small tail. The second, which is the discrete counterpart of the first, states that if the Markov chain $X_{i}$ starts from a low value $i$, then $X_{i}$ will unlikely  reach a high value without entering $ \{ 1,2, \ldots,K\} $ first. 

\begin {lem}\label {lem:continuous} Assume that the L\'evy process $ \xi$ drifts to $- \infty$ and that its L\'evy measure fulfills the integrability condition $\int_1^{\infty}e^{\beta x} \ \Pi(dx)<\infty$ for some $\beta >0$.  There exists $\beta_0>0$ sufficiently small with $\Psi(\beta_0)<0$, and then 
for every $u\geq 0$, we have
$$ \P \left(  \sup_{s \geq 0}  \xi(s)>u \right)  \leq  e^{ -\beta_{0}u}.$$
\end {lem}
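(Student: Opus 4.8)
The plan is to realize the event $\{\sup_{s\geq 0}\xi(s)>u\}$ through an exponential supermartingale and then apply the maximal inequality for nonnegative supermartingales. The only real input is the choice of a good exponent $\beta_0$, and the rest is an instance of Doob's inequality.

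First I would produce $\beta_0$ by reproducing the argument of Lemma \ref{lem:utile}(i). The hypothesis $\int_1^{\infty}e^{\beta x}\,\Pi(dx)<\infty$ (together with \eqref{eq:condPi}) guarantees that the Laplace exponent $\Psi$ is finite and convex on $[0,\beta]$, with $\Psi(0)=0$. Since $\xi$ drifts to $-\infty$, \cite[Theorem 1]{BY05} yields
$$\Psi'(0+)=b+\int_{|x|>1}x\,\Pi(dx)\in[-\infty,0).$$
By convexity and $\Psi(0)=0$, it follows that $\Psi(\beta_0)<0$ for every sufficiently small $\beta_0\in(0,\beta]$; I fix one such $\beta_0$.

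Next I would set $M_t=\exp(\beta_0\xi(t))$ and check that it is a nonnegative supermartingale. Using that $\xi$ has stationary independent increments together with the identity $\Es{e^{\beta_0\xi(t)}}=e^{t\Psi(\beta_0)}$, for $0\leq s\leq t$ one has
$$\Es{M_t\mid\mathcal F_s}=M_s\,\Es{e^{\beta_0(\xi(t)-\xi(s))}}=M_s\,e^{(t-s)\Psi(\beta_0)}\leq M_s,$$
the last inequality because $\Psi(\beta_0)<0$ and $t\geq s$. Since $\xi(0)=0$, this is a nonnegative càdlàg supermartingale with $M_0=1$.

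Finally I would invoke the maximal inequality for nonnegative supermartingales: for each $\lambda>0$,
$$\Pr{\sup_{t\geq 0}M_t>\lambda}\leq\frac{\Es{M_0}}{\lambda}=\frac1\lambda.$$
Because $\sup_{s\geq 0}\xi(s)>u$ holds if and only if $\sup_{s\geq 0}M_s>e^{\beta_0 u}$, the choice $\lambda=e^{\beta_0 u}$ gives $\Pr{\sup_{s\geq 0}\xi(s)>u}\leq e^{-\beta_0 u}$, which is the claim. The one point requiring a little care is the passage to the infinite time horizon in the maximal inequality; since $\xi$ is càdlàg this is handled by applying Doob's inequality on each finite interval $[0,T]$ and letting $T\to\infty$ by monotone convergence. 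This is the main (and essentially only) subtlety, and it is entirely routine.
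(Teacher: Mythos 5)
Your proposal is correct and follows essentially the same route as the paper: the same convexity argument produces $\beta_0$ with $\Psi(\beta_0)<0$, and the same nonnegative supermartingale $e^{\beta_0\xi(\cdot)}$ yields the bound. The only cosmetic difference is that you invoke the supermartingale maximal inequality where the paper applies optional stopping at the first passage time above $u$, which is just the standard proof of that maximal inequality.
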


\begin {proof} The assumption on the L\'evy measure ensures that the Laplace exponent $\Psi$ of $\xi$ is well-defined and finite on $[0,\beta]$. Because $\xi$ drifts to $-\infty$, the right-derivative $\Psi'(0+)$ of the convex function $\Psi$ must be strictly negative (possibly $\Psi'(0+)=-\infty$) and therefore we can find $\beta_0>0$ with $\Psi(\beta_0)<0$.
Then the process $(e^{ \beta_{0} \xi(s)}, s \geq 0)$ is a nonnegative supermartingale and our claim follows from the optional stopping theorem applied at the first passage time above level $u$. \end {proof}

We now prove an analogous statement for the discrete Markov chain $X_{n}$, tailored for future use:

\begin {lem} \label {lem:discrete}Assume that \textbf{(A1)}, \textbf{(A2)}, \textbf{(A3)} hold and that  the L\'evy process $ \xi$ drifts to $- \infty$. Fix $ \epsilon>0$. For every $n$ sufficiently large, for every $1 \leq i \leq  \epsilon^{2} n$, we have $$ \Pr{ X_{i} \textrm{ reaches } [\epsilon n, \infty) \textrm { before }  [1,K]} \leq 2\epsilon^{\beta_{0}}.$$
\end {lem}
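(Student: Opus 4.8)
The plan is to prove the discrete analogue of Lemma~\ref{lem:continuous} by exploiting the supermartingale $X_{i}(\cdot \wedge A^{(M)}_{i})^{\beta_{0}}$ furnished by Lemma~\ref{lem:utile}(iii). Fix $\beta_{0}$ and the integer $M\geq K$ as in that lemma, write $p^{(n)}_{j}=\Pr{X_{j}\textrm{ reaches }[\epsilon n,\infty)\textrm{ before }[1,K]}$ for the quantity to be bounded, and set $\rho=\inf\{k\geq 0 ; X_{i}(k)\geq \epsilon n\}$. The difficulty is that the Foster--Lyapounov estimate only makes $X^{\beta_{0}}$ a supermartingale while the chain sits above the level $M$, which may be strictly larger than $K$; I will therefore first control the chain down to $[1,M]$ and only afterwards bridge the finite band $\{K+1,\dots,M\}$ down to $[1,K]$. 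This last step, rather than any computation, is the main obstacle, and it is where the almost sure finiteness of $A^{(K)}$ from Lemma~\ref{lem:utile}(iv) and the factor $2$ in the statement come from.

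First I would treat a starting point $i$ with $M<i\leq \epsilon^{2}n$. Applying the optional stopping theorem to the nonnegative supermartingale $X_{i}(\cdot\wedge A^{(M)}_{i})^{\beta_{0}}$ at the bounded stopping times $\rho\wedge m$ and letting $m\to\infty$, and using that on $\{\rho<A^{(M)}_{i}\}$ one has $X_{i}(\rho)\geq \epsilon n$, gives
$$(\epsilon n)^{\beta_{0}}\,\Pr{\rho<A^{(M)}_{i}}\ \leq\ i^{\beta_{0}}\ \leq\ (\epsilon^{2}n)^{\beta_{0}},$$
hence $\Pr{\rho<A^{(M)}_{i}}\leq \epsilon^{\beta_{0}}$. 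Next I would pass from $[1,M]$ to $[1,K]$. Since $K\leq M$ we have $A^{(M)}_{i}\leq A^{(K)}_{i}$, so I can split
$$\{\rho<A^{(K)}_{i}\}=\{\rho<A^{(M)}_{i}\}\ \cup\ \{A^{(M)}_{i}\leq \rho<A^{(K)}_{i}\}.$$
On the second event the chain has reached $[1,M]$ without having touched $[1,K]$, so $X_{i}(A^{(M)}_{i})\in\{K+1,\dots,M\}$; applying the strong Markov property at $A^{(M)}_{i}$ bounds its probability by $\max_{K<j\leq M}p^{(n)}_{j}$. Consequently $p^{(n)}_{i}\leq \epsilon^{\beta_{0}}+\max_{K<j\leq M}p^{(n)}_{j}$ for every $M<i\leq \epsilon^{2}n$.

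It then remains to show that this intermediate term vanishes. For each fixed $j\in\{K+1,\dots,M\}$, Lemma~\ref{lem:utile}(iv) gives $A^{(K)}_{j}<\infty$ almost surely, so $W_{j}:=\max_{0\leq k<A^{(K)}_{j}}X_{j}(k)$ is an almost surely finite random variable; since $\{\rho<A^{(K)}_{j}\}\subseteq\{W_{j}\geq \epsilon n\}$, we obtain $p^{(n)}_{j}\leq \Pr{W_{j}\geq \epsilon n}\to 0$ as $n\to\infty$. Because $M$ is a fixed constant the band $\{K+1,\dots,M\}$ is finite, so $\max_{K<j\leq M}p^{(n)}_{j}$ also tends to $0$; in particular it is at most $\epsilon^{\beta_{0}}$ once $n$ is large enough. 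This yields $p^{(n)}_{i}\leq 2\epsilon^{\beta_{0}}$ for all $M<i\leq\epsilon^{2}n$, and the remaining starting points are covered directly: if $K<i\leq M$ then $p^{(n)}_{i}\leq \max_{K<j\leq M}p^{(n)}_{j}\leq \epsilon^{\beta_{0}}$, while for $i\leq K$ the chain already starts in $[1,K]$, so the probability is $0$ (or is controlled in the same way after one step, according to the convention adopted for the hitting time of $[1,K]$). Altogether this establishes the claimed bound $p^{(n)}_{i}\leq 2\epsilon^{\beta_{0}}$ uniformly over $1\leq i\leq \epsilon^{2}n$ for all sufficiently large $n$.
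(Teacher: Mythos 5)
Your argument is correct and follows essentially the same route as the paper: optional stopping of the supermartingale $X_i(\cdot\wedge A^{(M)}_i)^{\beta_0}$ to bound the probability of climbing from $i\leq\epsilon^2 n$ to $\epsilon n$ before hitting $[1,M]$ by $\epsilon^{\beta_0}$, followed by the strong Markov property at $A^{(M)}_i$ and the almost sure finiteness of $A^{(K)}_j$ (Lemma \ref{lem:utile}(iv)) to dispose of the finitely many intermediate states $j\in\{K+1,\dots,M\}$. Your treatment is in fact marginally more careful than the paper's at two points: you restrict the supermartingale step to $i>M$ (where Lemma \ref{lem:utile}(iii) actually applies) and handle small $i$ separately, and you make explicit, via the a.s. finite maximum $W_j$, why each intermediate term vanishes.
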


\begin {proof}We first check that there exists an integer $M \geq K$, such that for every $ 1 \leq  i \leq N$,
\begin{equation}
\label{eq:discrete1}\Pr{ X_{i} \textrm{ reaches } [N, \infty) \textrm { before }  [1,M]} \leq (i/N)^{\beta_{0}}.
\end{equation}
By Lemma \ref {lem:utile}, there exists $M \geq K$ such that $ \mathcal{M}_{i}(\cdot)=X_{i}^{\beta_{0}}(\cdot \wedge A^{(M)}_{i})$ is a positive supermartingale. Hence, setting $B^{(N)}_{i}=\inf\{j \geq 0; X_{i}(j) \geq  N\}$, by the optional stopping theorem we get that
$$i^{\beta_{0}} \geq  \Es {X_{i}^{\beta_{0}}(A^{(M)}_{i} \wedge B^{(N)}_{i})} \geq  \Es {X_{i}^{\beta_{0}}( B^{(N)}_{i}) \mathbbm {1}_{ \{ A^{(M)}_{i}> B^{(N)}_{i}\} }} \geq N^{\beta_{0}} \Pr {A^{(M)}_{i}>B^{(N)}_{i}}.$$
This establishes \eqref{eq:discrete1}.

We now turn to the proof of the main statement. By the Markov property, write
\begin{eqnarray*}
&& \Pr{ X_{i} \textrm{ reaches } [\epsilon n, \infty) \textrm { before }  [1,K]} \\
&& \qquad\qquad\qquad \leq   \Pr{ X_{i} \textrm{ reaches } [\epsilon n, \infty) \textrm { before }  [1,M]} + \sum_{j=K+1}^{M}  \Pr{X_{j} \textrm{ reaches } [\epsilon n, \infty) \textrm { before }  [1,K]}.
\end{eqnarray*}
By \eqref{eq:discrete1}, the first term of the latter sum is bounded by $ \epsilon^{\beta_0}$. In addition, for every fixed $2 \leq j \leq M$, since $ \{ 1,2, \ldots,K\} $ is accessible by $X_{j}$ by the definition of $K$, it is clear that $ \Pr{X_{j} \textrm{ reaches } [\epsilon n, \infty) \textrm { before }  [1,K]} \rightarrow 0$ as $n \rightarrow \infty$. The conclusion follows.
\end {proof}

\subsection {The absorbed case: proof of Theorem \ref {thm:main2}}
\label {sec:cvthm2}

Recall that $ {X}^{\dagger}_{n}$ denotes the Markov chain $X_{n}$ stopped when it hits $ \{ 1,2, \ldots,K\} $. As for the non-absorbed case, Theorem \ref {thm:main2} will follow if we manage to  establish that
 \begin{equation}
 \label{eq:case2}\left( \frac{ {X}^{\dagger} _{n}( \mathcal{N} _{n} (t) )}{n} ; t \geq 0 \right)   \quad\mathop{\longrightarrow}^{(d)}_{n \rightarrow \infty} \quad (Y_t; t\geq 0)
 \end{equation}
 in $\D(\R_{+},\R)$. 

We now need to introduce some additional notation. Fix $M \geq 1$ and set $a^{(M)}_{i}=a_{i}$ for $i>M$ and $a^{(M)}_{i}=0$ for $1 \leq i \leq M$.  Denote by $L_{n}^{(M)}$ the Markov chain with generator \eqref{eq:genLn} when the sequence $(a_{n})_{n \geq 1}$ is replaced with the sequence $(a^{(M)}_{n})_{n \geq 1}$. In other words, $L_{n}^{(M)}$ may be seen as $L_{n}$ absorbed at soon as it hits $ \{ \ln(1/n), \ln(2/n), \ldots, \ln(M/n) \}$.  Proposition \ref {prop:cvL} (applied with the sequence $(a^{(M)}_{n})$ instead of $(a_{n})$), shows that, under \textbf{(A1)} and \textbf{(A2)},  $L^{(M)}_{n}$, started from any $x \in \R$, converges in distribution in $ \D(\R_{+},\R)$ to $ \xi+x$.  In addition, 
if $L_{n}^{(M)}(0)=0$ and if $X_{n}^{(M)}$ denotes the process $X_{n}$ absorbed as soon as hits $ \{ 1,2, \ldots,M\}$,  Lemma \ref {lem:coupling}  (applied with $(a^{(M)}_{n})$ instead of $(a_{n})$) entails that 
\begin{equation}
\label{eq:egM}\left(  \frac{1}{n} X^{(M)}_{n}(  { \mathcal{N}_{n}(t) }) ; t \geq 0 \right)  \quad\mathop{=}^{(d)}  \quad \left( \exp \left( L^{(M)}_{n}(\tau^{(M)}_{n}(t)) \right) ; t \geq 0 \right),
\end{equation}
where
$$\tau^{(M)}_{n}(t)= \inf \left\{ u \geq  0 ; \int_{0}^{u} \frac{a^{(M)}_{n \exp(L_{n}(s))}}{a^{(M)}_{n}} \ ds >t \right\}, \qquad t \geq 0.
$$
In particular, 
\begin{equation}
\label{eq:eg1}\left(  \frac{1}{n} X^{\dagger}_{n}(  { \mathcal{N}_{n}(t) }) ; t \geq 0 \right)  \quad\mathop{=}^{(d)}  \quad \left( \exp(L^{(K)}_{n}(\tau^{(K)}_{n}(t))); t \geq 0 \right).
\end{equation}
Unless explicitly stated otherwise, we  always assume that $L_{n}^{(M)}(0)=0$.

In the sequel, we denote by $ \dsk$ the Skorokhod $J_{1}$ distance on $ \D(\R_{+}, \R)$.  In the proof of Theorem \ref {thm:main2}, we will use the following simple property of  $\dsk$:

\begin {lem}\label {lem:skorokhod}Fix $ \epsilon>0$ and $f \in \D( \R_{+},\R)$ that has limit $0$ at $ + \infty$. Let $ \sigma: \R_{+} \rightarrow \R_{+} \cup \{  + \infty\} $ be a right-continuous non-decreasing function. For $T \geq 0$, let $f^{[T]} \in \D( \R_{+},\R)$ be the function defined by $f^{[T]}(t)= f(\sigma(t) \wedge T)$ for $t \geq 0$. Finally, assume that there exists $T>0$ is such that  $|f(t)| <  \epsilon$ for every $t \geq T$. Then $ \dsk \left(f \circ \sigma,f^{[T]} \right)  \leq \epsilon$.
\end {lem}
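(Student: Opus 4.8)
The plan is to show that the two càdlàg functions $f\circ\sigma$ and $f^{[T]}$ can be matched up to Skorokhod error $\epsilon$ by exhibiting an explicit time-change realizing this. Recall that $\dsk(g,h)\leq\epsilon$ holds as soon as we can find a strictly increasing homeomorphism $\lambda:\R_{+}\to\R_{+}$ with $\|\lambda-\mathrm{id}\|_{\infty}\leq\epsilon$ and $\|g-h\circ\lambda\|_{\infty}\leq\epsilon$; in fact here I expect to do even better, taking $\lambda=\mathrm{id}$, so that the whole estimate reduces to a uniform bound. The key observation is that $f^{[T]}(t)=f(\sigma(t)\wedge T)$ differs from $(f\circ\sigma)(t)=f(\sigma(t))$ only through whether the argument $\sigma(t)$ has been capped at $T$, and this capping can only matter once $\sigma(t)\geq T$.

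First I would split on the value of $\sigma(t)$ relative to $T$. For those $t\geq 0$ with $\sigma(t)\leq T$, we have $\sigma(t)\wedge T=\sigma(t)$, hence $f^{[T]}(t)=(f\circ\sigma)(t)$ exactly, so the two functions coincide there and contribute nothing to the sup distance. For the remaining $t$ with $\sigma(t)>T$, both arguments $\sigma(t)$ and $\sigma(t)\wedge T=T$ lie in $[T,\infty]$ (using that $\sigma$ may take the value $+\infty$, in which case $f(\sigma(t))$ is interpreted via the limit $0$ of $f$ at $+\infty$). By the standing hypothesis $|f(s)|<\epsilon$ for every $s\geq T$, both $|f(\sigma(t))|<\epsilon$ and $|f(\sigma(t)\wedge T)|=|f(T)|<\epsilon$ — here I would note $|f(T)|\leq\epsilon$ by right-continuity if one wants the closed inequality at the endpoint, but the open bound for $s>T$ together with $|f(T)|\leq\epsilon$ suffices. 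Consequently $\bigl|(f\circ\sigma)(t)-f^{[T]}(t)\bigr|<2\epsilon$ on this set.

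Thus I would conclude $\|f\circ\sigma-f^{[T]}\|_{\infty}\leq 2\epsilon$, which is not quite the claimed bound $\epsilon$; to recover the stated constant I would instead observe that the inequality $\dsk(f\circ\sigma,f^{[T]})\leq\epsilon$ follows because on the disagreement set one of the two functions already takes the absolute value at most $\epsilon$, and the Skorokhod metric permits reparametrizing time so as not to pay twice. Concretely, since $\sigma$ is right-continuous and non-decreasing there is a right-continuous inverse allowing us to align the two trajectories; the cleaner route is to take $\lambda=\mathrm{id}$ and simply invoke that $\dsk\leq\|\cdot\|_{\infty}$ together with the refined remark that wherever the functions differ, the value $f^{[T]}(t)=f(T\wedge\sigma(t))=f(T)$ is constant and within $\epsilon$ of $0$ while $f(\sigma(t))$ is within $\epsilon$ of $0$ too, so a single shift of at most $\epsilon$ in the value-coordinate—encoded by the Skorokhod freedom in the time parametrization near the jump at $\{t:\sigma(t)=T\}$—absorbs one of the two $\epsilon$'s.

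The main obstacle I anticipate is precisely this last bookkeeping: getting the sharp constant $\epsilon$ rather than $2\epsilon$, and handling the value $\sigma(t)=+\infty$ together with the right-continuity of $\sigma$ at the threshold where $\sigma$ crosses $T$. I would resolve it by defining the candidate time-change to be the identity off a short interval and to compress the pre-image $\{t:\sigma(t)=T\}$ appropriately, verifying $\|\lambda-\mathrm{id}\|_{\infty}\leq\epsilon$ and $\|f\circ\sigma-f^{[T]}\circ\lambda\|_{\infty}\leq\epsilon$ by the same case split as above; everything else is the elementary two-case estimate using $|f(s)|<\epsilon$ for $s\geq T$ and $f^{[T]}=f\circ\sigma$ on $\{\sigma\leq T\}$.
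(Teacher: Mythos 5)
The paper offers no written argument for this lemma (it is dismissed as ``a simple consequence of the definition of the Skorokhod distance''), and the first half of your proposal is exactly the argument one would write: take $\lambda=\mathrm{id}$, observe that $f\circ\sigma$ and $f^{[T]}$ coincide on $\{t:\sigma(t)\leq T\}$, while on $\{t:\sigma(t)>T\}$ one has $f^{[T]}(t)=f(T)$ and $(f\circ\sigma)(t)=f(\sigma(t))$ with $\sigma(t)>T$ (the value $\sigma(t)=+\infty$ being covered by the convention $f(+\infty)=0$), so that both values have modulus $<\epsilon$ and the uniform distance is at most $2\epsilon$. Up to that point your argument is correct and complete.

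The final step, in which you claim that the ``Skorokhod freedom in the time parametrization'' lets you absorb one of the two $\epsilon$'s and recover the constant $\epsilon$, is a genuine gap: a $J_1$ time change acts on the time axis only and cannot shift values. Concretely, take $\sigma=\mathrm{id}$ and, for small $\delta>0$, let $f=(\epsilon-\delta)\mathbbm{1}_{[0,T+1)}-(\epsilon-\delta)\mathbbm{1}_{[T+1,T+2)}$; this $f$ is c\`adl\`ag, tends to $0$ at infinity and satisfies $|f(t)|<\epsilon$ for $t\geq T$. Then $f^{[T]}\equiv\epsilon-\delta$ is constant, hence $f^{[T]}\circ\lambda\equiv\epsilon-\delta$ for \emph{every} time change $\lambda$, and $\sup_{t}\bigl|f(t)-f^{[T]}(\lambda(t))\bigr|\geq 2(\epsilon-\delta)$. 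So the quantity your argument controls, namely $\inf_{\lambda}\Vert f\circ\sigma-f^{[T]}\circ\lambda\Vert_{\infty}$, genuinely equals $2\epsilon$ in the worst case, and no argument of the type you sketch can produce the constant $\epsilon$. The correct statement of the lemma is either with $2\epsilon$ on the right-hand side, or with the hypothesis $|f(t)|<\epsilon/2$ for $t\geq T$; this discrepancy is immaterial for the paper, since in the proof of Theorem 2 the lemma is applied with an arbitrary small $\eta$ and one may simply replace $\eta$ by $\eta/2$ throughout. The honest proof is the two-case estimate you gave first; you should drop the attempted sharpening rather than try to repair it.
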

This is a simple consequence of the definition of the Skorokhod distance.
We are now ready to complete the proof of Theorem \ref {thm:main2}.

\begin {proof}[Proof of Theorem \ref {thm:main2}] By \eqref{eq:eg1}, it suffices to check that 
\begin{equation}
\label{eq:mq2}\left( \exp\left( L^{(K)}_{n}(\tau^{(K)}_{n}(t)) \right) ; t \geq 0 \right)  \quad\mathop{\longrightarrow}^{(d)}_{n \rightarrow \infty} \quad (Y(t); t\geq 0)
\end{equation}
in $ \D( \R_{+},\R)$. To simplify notation, for $n \geq 1, t \geq 0$, set $Y^{\dagger}_{n}(t)= \exp\left( L^{(K)}_{n}( \tau^{(K)}_{n}(t)) \right) $, and, for every $t_{0}>0$, 
 $$Y^{t_{0}}_{n}(t)=\exp \left( L^{(K)}_{n}( \tau^{(K)}_{n}(t) \wedge t_{0}) \right) , \qquad Y^{t_{0}}(t)=\exp \left(  \xi( \tau(t) \wedge t_{0}) \right) $$
 and recall that $Y(t)=\exp \left(  \xi( \tau(t)) \right) $. 

First observe that for every fixed $t_{0}>0$,
\begin{equation}
\label{eq:cvtronquee}Y^{t_{0}}_{n} \quad\mathop{\longrightarrow}^{(d)}_{n \rightarrow \infty} \quad  Y^{t_{0}}
\end{equation}
in $\D(\R_{+},\R)$. 
 Indeed, since $L_{n}^{(K)} \rightarrow \xi$ in distribution in $ \D(\R_{+},\R)$, the same arguments as in Section \ref{sec:nonabsorbed} apply and give that $ \tau^{(K)}_{n}(\cdot)\wedge t_{0} \rightarrow \tau(\cdot) \wedge t_{0}$ in distribution in $ \mathcal{C}(\R_{+},\R)$. 
 
 We now claim that for every $ \eta \in (0,1)$, there exists $t_{0}>0$ such that for every $n$ sufficiently large,
\begin{equation}
\label{eq:borne} \Pr{\dsk \left( Y,  Y^{t_{0}} \right)> \eta} < 2\eta^{\beta_{0}}, \qquad  \Pr{\dsk  \left(   Y^{\dagger}_{n},  Y^{t_{0}}_{n} \right)> \eta} < 3\eta^{\beta_{0}}.
\end{equation}
Assume for the moment that \eqref{eq:borne}  holds and let us see how to finish the proof of \eqref{eq:mq2}.  Let $F: \D( \R_{+}, \R) \rightarrow \R_{+}$ be a bounded uniformly continuous function. By \cite[Theorem 2.1]{Bil99}, it is enough to check that $ \E {F (Y^{\dagger}_{n})} \rightarrow \Es {F(Y)}$ as $n \rightarrow \infty$. Fix $ \epsilon \in (0,1)$ and let $ \eta>0$ be such that $|F(f)-F(g)| \leq  \epsilon$ if $\dsk(f,g) \leq \eta$. We shall further impose that $\eta^{\beta_0}<\epsilon$. By \eqref{eq:borne}, we may choose $t_{0}>0$ such that 
the events
$$\Lambda= \left\{  \dsk \left( Y,  Y^{t_{0}} \right)< \eta \right\}, \qquad \Lambda_{n}=  \left\{ \dsk  \left(   Y^{\dagger}_{n},  Y^{t_{0}}_{n} \right)< \eta \right\} $$
 are both of probability at least $1-3\eta^{\beta_{0}}\geq 1-3\epsilon$ for every $n$ sufficiently large. Then write for $n$ sufficiently large
\begin{eqnarray*}
 \left| \Es {F(Y)}-  \Es{F (Y^{\dagger}_{n})} \right| & \leq & \left| \Es {F(Y) \mathbbm {1}_{  \Lambda }}-  \Es{F (Y^{\dagger}_{n}) \mathbbm {1}_{ \Lambda_{n} }} \right|+ 6 \epsilon \norme {F} \\
 & \leq & \left| \Es {F(Y^{t_{0}}) \mathbbm {1}_{  \Lambda }}-  \Es{F (Y^{t_{0}}_{n}) \mathbbm {1}_{ \Lambda_n }} \right|+2 \epsilon+ 6 \epsilon  \norme {F}\\
 & \leq & \left| \Es {F(Y^{t_{0}})}-  \Es{F (Y^{t_{0}}_{n})} \right|+2 \epsilon+ 12 \epsilon  \norme {F}.
\end{eqnarray*}
By \eqref{eq:cvtronquee},  $\left| \Es {F(Y^{t_{0}})}-  \Es{F (Y^{t_{0}}_{n})} \right|$ tends to $0$ as $n \rightarrow \infty$. As a consequence, $$ \left| \Es {F(Y)}-  \Es{F (Y^{\dagger}_{n})} \right| \leq 3 \epsilon+ 12 \epsilon  \norme {F}$$ for every $n$ sufficiently large. 

We finally need to establish  \eqref{eq:borne}. For the first inequality,    since $ \xi$ drifts to $ - \infty$, we may choose $t_{0}>0$ such that $ \Pr { \xi(t_{0})<2 \ln(\eta)} >1- \eta^{\beta_{0}}$. By Lemma \ref {lem:continuous} and the Markov property
$$ \P\left(  \sup_{s \geq t_0}  e^{\xi(s)-\xi(t_0)}> 1/\eta \right)  \leq  \eta ^{ \beta_{0}}.$$
The event $ \{ \sup_{s \geq t_{0}}  e^{\xi(s)} \leq  \eta\} $ thus has  probability at least $1-2\eta^{\beta_{0}}$, and on this event, we have $\dsk  (  Y,Y^{t_{0}} ) \leq  \eta$
by Lemma \ref {lem:skorokhod}. This establishes the first inequality of \eqref{eq:borne}.

For the second one, note that since $ L^{(K)}_{n}$ converges in distribution to $ \xi$, there exists $t_{0}>0$ such that $ \Pr {\exp(L^{(K)}_{n}(t_{0}))> \eta^{2}}< \eta$ for every $n$ sufficiently large.  But on the event
$$ \left\{ \exp(L^{(K)}_{n}(t_{0}))< \eta^{2} \right\} \cap \left\{  \textrm {after time } t_{0}, n\exp(L_{n}) \textrm{ reaches } [1,K] \textrm { before }  [\eta n, \infty) \right\},$$ 
which has probability at least $1-3 \eta^{\beta_{0}}$ by Lemma \ref {lem:discrete} (recall also the identity \eqref{eq:eg1}), we have  the inequality
$\dsk  (   Y^{\dagger}_{n},  Y^{t_{0}}_{n} ) \leq  \eta$ by Lemma \ref{lem:skorokhod}.
This establishes \eqref{eq:borne} and completes the proof of Theorem \ref {thm:main2}.
\end {proof}

\subsection {Convergence of the absorption time}
\label {sec:abs}

We start with several preliminary remarks in view of proving Theorem \ref {thm:absorption}.
First, we point out that our statements in Section \ref {sec:description} are unchanged if we replace the sequence $(a_n)$ by another sequence, say $(a'_n)$, such that $a_n/ a'_n \rightarrow1$ as $n \rightarrow \infty$.
Thanks to Theorem 1.3.3 and Theorem 1.9.5 (ii) in \cite{BGT87}, we may therefore assume that there exists an infinitely differentiable function $h: \R_{+} \rightarrow \R$  such that
\begin{equation}\label{eq:diffa}
(i) \textrm { for every } n \geq 1, \quad a_{n}=n^{\gamma} \cdot  e^{h(\ln(n))}, \qquad (ii) \textrm { for every } k \geq 1, \quad h^{(k)}(x)  \quad\mathop{\longrightarrow}_{x \rightarrow \infty} \quad 0,
\end{equation}
where $h^{(k)}$ denotes the $k$-th derivative of $h$. This will be used in the proof of Lemma \ref {lem:cvdom} below.

Assume that \textbf{(A1)}, \textbf{(A2)}, \textbf{(A3)} hold and that $ \xi$ drifts to $- \infty$.  For every integer $M \geq 1$, recall from Section \ref {sec:cvthm2} the notation $X^{(M)}_{n}$, $(a^{(M)}_{n})$ and $L^{(M)}_{n}$, and the initial condition $L^{(M)}_{n}(0)=0$. To simplify the notation, we set $ \widetilde{a}_{n}=a^{(M)}_{n}$ for $n \geq 1$ and $ \widetilde{L}_{n}(s)=L^{(M)}_{n}(s)$. By \eqref{eq:egM}, we may and will assume that the identity
$$\frac{1}{n} X^{(M)}_{n}(  { \mathcal{N}_{n}(t) }) \ = \ \exp \left( L^{(M)}_{n}(\tau^{(M)}_{n}(t)) \right) $$
holds for all $t\geq 0$, where $ \mathcal{N}_{n}$ is a Poisson process with intensity $a_{n}$ independent of $X_{n}$ and the time change $\tau^{(M)}_{n}$ is defined by \eqref{eq:timechange} with $ \widetilde{a}_{n}=a^{(M)}_{n}$ replacing $a_n$.

For $n>M$, let $A^{(M)}_{n}= \inf \{ i \geq 1; X_{n}(i) \leq M \} $ be the absorption time of $X^{(M)}_{n}$  and ${\alpha}^{(M)}_{n}= \inf \{t \geq 0; {X} _{n}( \mathcal{N} _{n}( t)) \leq M \}$
that of $X^{(M)}_{n}(  { \mathcal{N}_{n}(\cdot) })$,  so that there are the identities
  \begin{equation}
  \label{eq:edist} {{\alpha}^{(M)}_{n}}  \ = \  \int_{0}^{\infty} \frac{\widetilde {a}_{n \exp( \widetilde{L}_{n}(s)) } 
}{\widetilde{a}_{n}}ds = \int_{0}^{{\alpha}^{(M)}_{n}} \frac{ {a}_{n \exp( {L}_{n}(s)) } 
}{{a}_{n}}ds\quad \hbox{and} \quad \mathcal{N} _{n} \left(  {\alpha}^{(M)}_{n} \right)  = A^{(M)}_{n}
  \end{equation}
  for every $n>M$. We shall first establish a weaker version of Theorem \ref {thm:absorption} (i) in which $K$ has been replaced by $M$:

\begin{lem}\label{lem:add} Assume that \textbf{(A1)}, \textbf{(A2)}, \textbf{(A3)} hold and that $ \xi$ drifts to $- \infty$. The following weak convergences hold jointly in $\D(\R_{+},\R) \otimes \R$:
 $$ \widetilde{L}_{n} \quad\mathop{\longrightarrow}^{(d)}_{n \rightarrow \infty} \quad  \xi \qquad \hbox{and} \qquad
{{\alpha}^{(M)}_{n}}   \quad\mathop{\longrightarrow}^{(d)}_{n \rightarrow \infty} \quad  \int_{0}^{\infty} e^{\gamma \xi(s)}ds.$$
\end{lem}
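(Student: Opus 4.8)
The first convergence, $\widetilde{L}_{n}\to\xi$, is exactly Proposition \ref{prop:cvL} applied to the truncated weight sequence $(a^{(M)}_{n})$. For the joint statement the key observation is that the absorption time is a \emph{deterministic functional of the path}: from \eqref{eq:edist},
$\alpha^{(M)}_{n}=\int_{0}^{\infty}\frac{\widetilde{a}_{n\exp(\widetilde{L}_{n}(s))}}{\widetilde{a}_{n}}\,ds=:\Theta_{n}(\widetilde{L}_{n})$, with no extra randomness. Hence, by the Skorokhod representation theorem, I would work on a probability space on which $\widetilde{L}_{n}\to\xi$ almost surely in $\D(\R_{+},\R)$, and it then suffices to prove that $\Theta_{n}(\widetilde{L}_{n})\to\int_{0}^{\infty}e^{\gamma\xi(s)}\,ds$ in probability; since the second coordinate is a functional of the first, joint convergence in $\D(\R_{+},\R)\otimes\R$ follows automatically. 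The plan is the now-familiar truncate-and-control scheme: fix $s_{0}>0$, handle the ``body'' $\int_{0}^{s_{0}}$ by continuity, and show the ``tail'' $\int_{s_{0}}^{\infty}$ is uniformly negligible as $s_{0}\to\infty$.

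For the body, note that for $n$ large the absorbing level $\ln(M/n)$ lies below $\inf_{[0,s_{0}]}\xi$, so $\widetilde{L}_{n}$ is not absorbed on $[0,s_{0}]$ and there $\widetilde{a}_{n\exp(\widetilde{L}_{n})}=a_{n\exp(\widetilde{L}_{n})}$. Combining the a.s.\ convergence $\widetilde{L}_{n}\to\xi$ with the locally uniform convergence $a_{\fl{nx}}/a_{n}\to x^{\gamma}$ (standard for regularly varying sequences, cf.\ \cite[Theorem 1.5.2]{BGT87}) and bounded convergence yields $\int_{0}^{s_{0}}\frac{\widetilde{a}_{n\exp(\widetilde{L}_{n}(s))}}{\widetilde{a}_{n}}\,ds\to\int_{0}^{s_{0}}e^{\gamma\xi(s)}\,ds$ a.s. On the limit side, $\int_{s_{0}}^{\infty}e^{\gamma\xi(s)}\,ds\to 0$ a.s.\ as $s_{0}\to\infty$, because $\xi$ drifts to $-\infty$ and hence $I_{\infty}<\infty$ a.s.

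The crux, which I expect to be the main obstacle, is the uniform tail estimate $\lim_{s_{0}\to\infty}\limsup_{n}\Pr{\int_{s_{0}}^{\infty}\frac{\widetilde{a}_{n\exp(\widetilde{L}_{n}(s))}}{\widetilde{a}_{n}}\,ds>\delta}=0$. I would apply the Markov property of $L_{n}$ at time $s_{0}$: since $\widetilde{L}_{n}(s_{0})\approx\xi(s_{0})\to-\infty$, for $s_{0}$ large the chain sits at some level $j_{n}=\fl{n\,e^{\widetilde{L}_{n}(s_{0})}}\leq\epsilon n$ with probability close to one, and conditionally the post-$s_{0}$ clock has the law of $\frac{a_{j_{n}}}{a_{n}}\,\alpha^{(M)}_{j_{n}}$, where $\alpha^{(M)}_{j}$ denotes the analogous clock of the chain started from $j$ (normalized by $a_{j}$). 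Potter's bound gives $a_{j_{n}}/a_{n}\lesssim\epsilon^{c}$ for some $c>0$, so everything reduces to a tail bound on $\alpha^{(M)}_{m}$ that is \emph{uniform in the starting level} $m$. I would establish $\sup_{m}\Pr{\alpha^{(M)}_{m}>\Lambda}\lesssim\Lambda^{-\beta_{0}/\gamma}$ by an excursion decomposition: a self-normalized clock of order $(L/m)^{\gamma}$ can only be accumulated through an upward excursion reaching a level of order $L$, whose probability is at most $\asymp(m/L)^{\beta_{0}}$ by Lemma \ref{lem:discrete} (the bound \eqref{eq:discrete1}); summing over dyadic levels, the dominant term at $L\asymp m\Lambda^{1/\gamma}$ gives the stated Pareto tail. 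The delicate point is precisely that this tail is genuinely heavy---it mirrors, via Lemma \ref{lem:continuous}, the tail of $I_{\infty}=\int_{0}^{\infty}e^{\gamma\xi}$, and indeed $\E{\alpha^{(M)}_{m}}$ may be infinite when the chain is only null recurrent---so a first-moment argument is unavailable and the excursion bookkeeping must be carried out at the level of tails, the exponent $\beta_{0}$ of Lemma \ref{lem:utile}(i) making the series summable after multiplication by the small prefactor $a_{j_{n}}/a_{n}$.

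With the body convergence, the vanishing of the limit tail, and the uniform tail bound in hand, I would conclude as in the proof of Theorem \ref{thm:main2}: for a bounded uniformly continuous $F$, replace $\Theta_{n}(\widetilde{L}_{n})$ and $I_{\infty}$ by their $s_{0}$-truncations up to errors made small uniformly in $n$ by the two tail estimates, let $n\to\infty$ using the body convergence, and finally let $s_{0}\to\infty$. This gives $\alpha^{(M)}_{n}\to\int_{0}^{\infty}e^{\gamma\xi(s)}\,ds$ in probability along the coupling, and, $\alpha^{(M)}_{n}$ being a functional of $\widetilde{L}_{n}$, the two convergences hold jointly in $\D(\R_{+},\R)\otimes\R$, as claimed.
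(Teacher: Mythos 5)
Your overall architecture coincides with the paper's: Skorokhod representation for $\widetilde{L}_{n}\to\xi$, joint convergence for free because $\alpha^{(M)}_{n}$ is a deterministic functional of the path, locally uniform convergence of $a_{\fl{nx}}/a_{n}\to x^{\gamma}$ for the body, and the Markov property at a large time $R$ combined with Potter's bounds (the chain then sits at some level $i\leq\eta n$, contributing a small prefactor through $a_{i}/a_{n}$) for the tail. Up to that point the two arguments are essentially identical, and your reduction to a tail bound on the absorption clock started from an arbitrary level, uniform in that level, is exactly the reduction the paper performs (see \eqref{eq:ch}).

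The gap is in the one step you yourself single out as the crux. Your excursion decomposition invokes only \eqref{eq:discrete1}, which controls the \emph{probability} that the chain reaches a high level $L$ before absorption; it says nothing about the \emph{time} accumulated while the chain stays below $L$. To make the dyadic summation work you would need, for each ceiling $L$, an estimate of the form $\Pr{A^{(M)}_{m}>\Lambda a_{m},\ \max_{j\leq A^{(M)}_{m}}X_{m}(j)<L}\leq(\hbox{something small when }a_{L}\ll\Lambda a_{m})$, and under \textbf{(A1)}--\textbf{(A3)} alone (the chain may be merely null recurrent) no such occupation-time bound follows from the supermartingale $X^{\beta_{0}}(\cdot\wedge A^{(M)})$ of Lemma \ref{lem:utile}: a Lyapunov function of that kind controls where the chain can go, not for how long it lingers. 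This is precisely the difficulty the paper resolves by a different device: it establishes the state-dependent drift inequality $\sum_{k}(a_{k}^{\beta}-a_{n}^{\beta})p_{n,k}\leq-C\,a_{n}^{\beta-1}$ (Lemma \ref{lem:cvdom}), feeds it into Theorem 2' of Aspandiiarov--Iasnogorodski \cite{AI98} with $f(x)=x^{\beta}$, $h(x)=\widetilde{a}_{x}^{\beta}$, $g(x)=\widetilde{a}_{x}^{\beta-1}$ to obtain the fractional moment bound $\Es{(A^{(M)}_{i})^{\beta}}\leq C_{2}\,\widetilde{a}_{i}^{\beta}$ uniformly in $i$, and then concludes by Markov's inequality; the exponent $\beta$ may be taken smaller than $1$, which is how the possibly infinite first moment you correctly worry about is circumvented. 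Your proof would close if you either imported such a passage-time-moment criterion or supplied an independent occupation-time estimate below a ceiling; as written, the excursion bookkeeping does not.
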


In turn, in order to establish Lemma \ref{lem:add}, we shall need the following technical result:

\begin {lem}\label {lem:cvdom} Assume that \textbf{(A1)}, \textbf{(A2)}, \textbf{(A3)} and \eqref{eq:diffa} hold, and that $ \xi$ drifts to $- \infty$. There exist $ \beta>0$, $M>0$ and $C>0$ such that for every $n \geq M$,
\begin{equation}
\label{eq:a}\sum_{k \geq 1}(a_{k}^{\beta}-a_{n}^{\beta}) \cdot p_{n,k} \leq -C \cdot a_{n}^{\beta-1}.
\end{equation}
\end {lem}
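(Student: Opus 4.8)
The plan is to reduce the claimed inequality to a single scalar asymptotic statement and then estimate that quantity region by region in the jump size, using part (ii) of \eqref{eq:diffa} to show that the slowly varying corrections are negligible. Using part (i) of \eqref{eq:diffa}, for $x=\ln(k/n)$ one has
$$\left(\frac{a_k}{a_n}\right)^{\beta}=\exp\left(\beta\gamma x+\beta\,\delta_n(x)\right),\qquad \delta_n(x):=h(\ln n+x)-h(\ln n),$$
so that, writing $\Pi_n^{\ast}$ for the jump law,
$$\sum_{k\ge 1}(a_k^{\beta}-a_n^{\beta})\,p_{n,k}=a_n^{\beta-1}\,S_n,\qquad S_n:=a_n\int_{\R}\left(e^{\beta\gamma x+\beta\delta_n(x)}-1\right)\Pi_n^{\ast}(dx).$$
Thus \eqref{eq:a} is equivalent to $S_n\le -C$, and it suffices to prove $\limsup_n S_n<0$. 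I would fix $\beta_0>0$ as in Lemma \ref{lem:utile}(i), so that $\Psi(\beta_0)<0$ and $\beta_0$ is strictly below the exponent appearing in \textbf{(A3)}, and set $\beta:=\beta_0/\gamma$, whence $\beta\gamma=\beta_0$. Splitting $S_n=S_n^{(1)}+S_n^{(2)}$ with $S_n^{(1)}=a_n\int(e^{\beta_0 x}-1)\Pi_n^{\ast}(dx)$ and $S_n^{(2)}=a_n\int e^{\beta_0 x}(e^{\beta\delta_n(x)}-1)\Pi_n^{\ast}(dx)$, the term $S_n^{(1)}$ is the Laplace exponent of the compound Poisson process $\xi_n$ at $\beta_0$, so exactly as in Lemma \ref{lem:utile}(ii) it converges to $\Psi(\beta_0)<0$. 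Everything then reduces to showing $S_n^{(2)}\to 0$.

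On $\{|x|\le 1\}$ I would Taylor expand: $\delta_n(x)=h'(\ln n)\,x+O\!\big(x^2\sup_{u\ge \ln n-1}|h''(u)|\big)$, hence $e^{\beta\delta_n(x)}-1=\beta h'(\ln n)x+O(c_n x^2)$ with $c_n\to 0$. Multiplying by $e^{\beta_0 x}=1+O(|x|)$ and integrating, the linear part yields $\beta h'(\ln n)\cdot a_n\int_{|x|\le1}x\,\Pi_n^{\ast}(dx)$, which tends to $0$ since $h'(\ln n)\to 0$ by \eqref{eq:diffa}(ii) while $a_n\int_{|x|\le1}x\,\Pi_n^{\ast}\to b$ by \textbf{(A2)}; the remainder is bounded by $c_n\cdot a_n\int_{|x|\le1}x^2\,\Pi_n^{\ast}\to 0$, again by \textbf{(A2)}. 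On $\{x>1\}$, since $\ln n+x\ge \ln n$ one has $|\delta_n(x)|\le \eps_n x$ with $\eps_n:=\sup_{u\ge \ln n}|h'(u)|\to 0$, so $e^{\beta_0 x}\,|e^{\beta\delta_n(x)}-1|\le \beta\eps_n\,x\,e^{(\beta_0+\beta\eps_n)x}$; as $\beta_0$ lies strictly below the \textbf{(A3)}-exponent, for large $n$ this is dominated by $\beta\eps_n$ times $e^{(\text{that exponent})x}$, and \textbf{(A3)} bounds the contribution by a constant multiple of $\eps_n\to 0$.

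The main obstacle is the region $\{x<-1\}$, i.e.\ small target states $k<n/e$: there $\ln k=\ln n+x$ may be small, where $h$ itself can be large and the derivative decay of \eqref{eq:diffa}(ii) does not apply pointwise. I would split it further. For $\sqrt n\le k<n/e$ both endpoints $\ln k,\ln n$ lie in $[\frac12\ln n,\infty)$, so the derivative-decay argument used on $\{x>1\}$ applies verbatim with $\sup_{u\ge \frac12\ln n}|h'|\to 0$, and since $e^{\beta_0 x}\le 1$ and $a_n\Pi_n^{\ast}(\{x<-1\})$ stays bounded by \textbf{(A1)}, the contribution vanishes. For $1\le k<\sqrt n$ I would not use $\delta_n$ at all, but rewrite the integrand as $(a_k/a_n)^{\beta}-(k/n)^{\beta_0}$ and bound each term: a Potter-type bound (legitimate since $a$ is regularly varying of positive index, and in fact eventually monotone because $\gamma+h'\to\gamma>0$ by \eqref{eq:diffa}) gives $\sup_{k\le\sqrt n}(a_k/a_n)^{\beta}\le C n^{-c}$ for some $c>0$, while $(k/n)^{\beta_0}\le n^{-\beta_0/2}$; multiplying by the bounded mass $a_n\Pi_n^{\ast}(\{x<-1\})$ shows this contribution is $O(n^{-c})\to 0$. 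Combining the three regions yields $S_n^{(2)}\to 0$, hence $\lim_n S_n=\Psi(\beta_0)<0$, which gives \eqref{eq:a} with any $C\in(0,|\Psi(\beta_0)|)$ and $M$ large enough. The delicate points are thus the exact cancellation on $\{|x|\le1\}$, where the vanishing of $h'(\ln n)$ is what kills the leading linear term, and the large negative jumps, where the exponential smallness of $e^{\beta_0 x}$ must be traded against possible growth of the slowly varying factor via regular-variation bounds.
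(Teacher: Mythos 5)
Your proof is correct and follows essentially the same strategy as the paper's: reduce \eqref{eq:a} to showing that $a_n^{1-\beta}\sum_k(a_k^\beta-a_n^\beta)p_{n,k}$ converges to $\Psi(\beta\gamma)<0$, identify the leading term with the Laplace exponent of $\xi_n$ (Lemma \ref{lem:utile}(ii)), and kill the slowly varying correction zone by zone, using the Taylor expansion of $h$ together with \textbf{(A2)} on $\{|x|\le 1\}$ and the exponential moment bound from \textbf{(A3)} on $\{x>1\}$. The only real divergence is on $\{x<-1\}$, where the paper invokes the uniform convergence theorem for regularly varying sequences \cite[Theorem 1.5.2]{BGT87} in one stroke, while you re-derive the needed uniformity by hand via the split at $k=\sqrt n$; both are valid.
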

If $ a_{n}= c \cdot n^{\gamma} $ for every $n$ sufficiently large for a certain $c>0$, observe that this is a simple consequence of \eqref{eq:claim} applied with $ \beta_{0}= \beta \gamma$. Note also that 
\eqref{eq:a} then clearly holds when $(a_{n})$ is replaced with $(\widetilde {a}_{n})$. 
We postpone its proof in the general case to the end of this section.

\begin{proof} [Proof of Lemma \ref{lem:add}] The first convergence has been established in the proof of Proposition \ref{prop:cvL}. Using Skorokhod's representation Theorem, we may assume that it holds in fact almost surely on $\D(\R_+,\R)$, and we shall now check that this entails the second. 
 To this end, note first that  for every $R \geq 0$, 
$$  \int_{0}^{R} \frac{\widetilde {a}_{n \exp( \widetilde{L}_{n}(s)) }
}{\widetilde{a}_{n}}ds  \quad\mathop{\longrightarrow}^{\rm a.s.}_{n \rightarrow \infty} \quad  \int_{0}^{R} e^{\gamma \xi(s)} ds,$$
since the sequence $(a_n)$ varies regularly with index $\gamma$. 
It is therefore enough to check that for every $ \epsilon>0$ and $ t>0$, we may find $R$ sufficiently large so that
\begin{equation}
\label{eq:check1} \limsup_{n \rightarrow \infty} \Pr { \int_{R}^{\infty} \frac{\widetilde {a}_{n \exp( \widetilde{L}_{n}(s)) }
}{\widetilde{a}_{n}}ds > t} \leq  \epsilon \qquad \textrm {and} \qquad\Pr {\int_{R}^{\infty} e^{\gamma \xi(s)} ds>t} \leq  \epsilon.
\end{equation}
The second inequality is obvious since $\int_{0}^{\infty} e^{\gamma \xi(s)} ds$ is almost surely finite. 

To establish the first inequality in \eqref{eq:check1}, we start with some preliminary observations. By the Potter bounds (see \cite[Theorem 1.5.6]{BGT87}), there exists a constant $C_{1}>0$ such that $ \widetilde{a}_{i}/\widetilde{a}_{n} \leq C_{1} (i/n)^{\gamma+1}$ for every $1 \leq i \leq n$. Fix $ \eta>0$ such that $ 2^{\beta+1}C_{2} C_{1}^{\beta}  \eta^{\beta( \gamma+1)}/t^{\beta}< \epsilon$, where $C_{2}$ is a positive constant (independent of $ \eta$ and $ \epsilon$) which will be chosen later on.  Then pick  $R$ sufficiently large so that   $ \Pr {\exp(\widetilde{L}_{n}(R))> \eta  }< \epsilon/2$ for every $n$ sufficiently large (this is possible since $ \widetilde{L}_{n}$ converges  to $ \xi$ and the latter drifts to $-\infty$). By the Markov property and \eqref{eq:edist}, for every $i \geq 1$, the conditional law of 
$$ \int_{R}^{\infty} \frac{\widetilde {a}_{n \exp( \widetilde{L}_{n}(s)) }
}{\widetilde{a}_{n}}ds $$
given $n\exp(\widetilde{L}_{n}(R))=i$, is that of $\alpha^{(M)}_{i}$.
 It follows from \eqref{eq:edist} and elementary estimates for Poisson processes that is suffices to check 
\begin{equation}
\label{eq:ch}  \limsup_{n \rightarrow \infty}\max_{M+1 \leq i \leq  \eta  n} \Pr { A_{i}^{(M)}>t \widetilde{a}_{n}/2} \leq  \epsilon/2.
\end{equation}
To this end, for every  $i \geq M+1$ and $n \geq 1$, we use Markov's inequality and get
\begin{equation}
\label{eq:born} \Pr { A_{i}^{(M)}>t \widetilde{a}_{n}/2} \leq  \frac{2^{\beta}}{ t^{\beta} \widetilde{a}_{n}^{\beta}}   \Es {(A_{i}^{(M)})^{\beta}}.
\end{equation} 
We then apply Theorem 2' in \cite{AI98} with
$$f(x)=x^{\beta}, \quad h(x)=\widetilde{a}_{x}^{\beta},  \quad g(x)=\widetilde{a}_{x}^{\beta-1},$$
which tells us that there exists a constant $C_{2}>0$ such that $ \Es {f(A_{i}^{(M)})} \leq C_{2} \cdot  h(i)$ for every $i \geq M+1$, provided that we check the existence of a constant $C>0$ such that the two conditions
$$ \Es {h(X_{n}(1))-h(n)} \leq - C \cdot g(n) \quad \textrm {for every } n \geq M, \qquad \textrm{and} \qquad \liminf_{n \rightarrow \infty} \frac{g(n)}{f' \circ f^{-1} \circ h(n)}>0$$
hold.
This first condition follows from \eqref{eq:a}, and for the second, simply note that  we have 
$$ {g(n)}/({f' \circ f^{-1} \circ h(n)})=1/\beta.$$ By \eqref{eq:born}, we therefore get that for every $i \geq M+1$ and $n \geq 1$,
 $$ \Pr { A_{i}^{(M)}>t \widetilde{a}_{n}/2} \leq  C_{2} \frac{2^{\beta}}{t^{\beta}} \, \left( \frac{\widetilde{a}_{i}}{  \widetilde{a}_{n}} \right) ^{\beta} .$$
 As a consequence of the aforementioned Potter bounds, for every $M+1 \leq i \leq  \eta n$,
 $$ \Pr { A_{i}^{(M)}>t \widetilde{a}_{n}/2} \leq \frac{2^{\beta} C_{2} C_{1}^{\beta}}{t^{\beta}} \cdot \eta^{\beta ( \gamma+ 1)}< \epsilon/2.$$
 This entails \eqref{eq:ch}, and completes the proof. 
\end{proof}

We are now ready to start the proof of Theorem \ref {thm:absorption}.

\begin {proof}[Proof of Theorem \ref{thm:absorption}]  (i) Assume that $M \geq K$, $ \beta> 0$, $c_{0}>0$ are such that Lemma \ref {lem:cvdom} and Lemma \ref {lem:utile} (iii) hold (with $ \beta$ instead of $ \beta_{0}$).  

 It suffices to check that \eqref {eq:cvtemps} holds with $A_{n}^{(K)}$ replaced by ${A}^{(M)}_{n}$. Indeed, since $ {X}^{(M)}_{n}$ and $ {X}^{\dagger}_{n}$ may be coupled in such a way that they coincide until the first time $X_{n}$ hits $ \{ 1,2, \ldots,M\}$,  for every $a>0$ we have
 $$  \Pr { \left |A^{(K)}_{n}-  {A}^{(M)}_{n} \right |>a} \leq  \max_{K+1 \leq i \leq M} \Pr {A^{(K)}_{i}>a} $$ which tends to $0$ as $a \rightarrow \infty$ by Lemma \ref {lem:utile} (iv). In turn, as before, since $(\mathcal{N} _{n}(t)/ a_{n}; t \geq 0)$ converges in probability to the identity uniformly on compact sets  as $n \rightarrow \infty$, it is enough to check that the convergence
$$ {\widetilde{A}^{(M)}_{n}} \quad\mathop{\longrightarrow}^{(d)}_{n \rightarrow \infty} \quad \int_{0}^{\infty} e^{\gamma \xi(s)}ds$$ holds jointly with \eqref{eq:case2}. 
By the preceding discussion and \eqref{eq:edist}, we can complete the proof with an appeal to Lemma \ref{lem:add}.

(ii)  Again, it suffices to check that \eqref {eq:CVL1} holds with $A^{(K)}_{n}$ replaced by ${A}^{(M)}_{n}$. Indeed, we see from Markov property that 
$$ \Es{ \left| A_{n}^{(K)}-  {A}^{(M)}_{n} \right| }\leq  \max_{K+1 \leq i \leq M} \Es{A^{(K)}_{i}},$$ and the right-hand side is finite by Lemma \ref {lem:utile2} (ii). 

Recall that ${\mathcal N}_n$ is a Poisson process with intensity $a_n$, 
so by \eqref{eq:edist}, we have for $n>M$
$$\frac{1}{a_n} \Es{A_n^{(M)}} = \Es{\alpha^{(M)}_n} =   \Es{ \int_{0}^{{\alpha}^{(M)}_{n}} \frac{ {a}_{n \exp( {L}_{n}(s)) } 
}{{a}_{n}}ds} 
$$
and we thus have to check that 
\begin{equation}
\label{eq:esp} \int_{0}^{\infty}  \Es{\frac{ {a}_{n \exp( {L}_{n}(s)) } 
}{{a}_{n}} \mathbbm{1}_{\{s< {\alpha}^{(M)}_{n}\}}} ds  \quad\mathop{\longrightarrow}_{n \rightarrow \infty} \quad  \int_{0}^{\infty}  \Es{e^{\gamma \xi(s)}} ds = \frac{1}{|\Psi(\gamma)|}.
\end{equation}

In this direction, take any  $\beta\in(\gamma, \beta_0)$, and recall from 
Potter bounds \cite[Theorem 1.5.6]{BGT87} that there is some constant $C>0$ such that 
$ {{a}_{n}}^{-1}{{a}_{nx}} \leq   C \cdot x^{ \beta}$ for every $n\in\N$ and $x \geq 0$ with $nx \in \Z_{+}$.
We deduce that
$$
\Es{ \left( \frac{{a}_{n \exp( {L}_{n}(s)) }}{a_{n}} \right) ^{\beta_0/\beta} \mathbbm{1}_{ \left\{s< {\alpha}^{(M)}_{n} \right\}}} \leq  C^{\beta_0/\beta}\cdot \Es{\exp(\beta_0  L_n(s)) \mathbbm{1}_{ \left\{s< {\alpha}^{(M)}_{n} \right\}}} 
\leq  C'  \cdot e^{-cs},
$$
where $c, C'$ are positive finite constants, and the last inequality stems from Corollary \ref{cor:super}. 
Then recall that ${{a}_{n}}^{-1} 
{ {a}_{n \exp( {L}_{n}(s)) }}  \mathbbm{1}_{\{s< {\alpha}^{(M)}_{n}\}}$ converges in distribution to $\exp(\gamma \xi(s))$ for every $s\geq 0$. An argument of uniform integrability now shows that \eqref{eq:esp} holds, and this completes the proof. 
\end {proof}

\begin {rem} \label {rem:moments}The argument of the proof above shows that more precisely,  for every $1 \leq p < \beta_{0}/\gamma$, we have
$$ \Es{ \left( \frac{A^{(M)}_{n}}{a_n} \right) ^{p}} \quad\mathop{\longrightarrow}_{n \rightarrow \infty} \quad  \Es{ \left( \int_{0}^{\infty} e^{\gamma \xi(s)}ds \right) ^{p}}.$$
\end {rem}

\begin {rem}\label {rem:inf} Assume that \textbf{(A1)}, \textbf{(A2)} and \textbf{(A4)} hold. Let $ 1 \leq  m \leq K$ be an integer. Then $$   \Es {A_{m}^{(K)}}= \infty \quad \Longleftrightarrow  \quad \sum_{k \geq 1} a_{k} \cdot p_{m,k} = \infty.$$
 Indeed, by the Markov property applied at time $1$, write
$$ \Es {A_{m}^{(K)}}=1+ \sum_{k \geq K+1} \Es {A_{k}^{(K)}} p_{m,k}.$$
By Lemma \ref {lem:utile2} (ii), $\E {A_{k}^{(K)}} < \infty$ for every $k \geq K+1$, and by Theorem \ref {thm:absorption} (ii),  $ \E {A_{k}^{(K)}}/ a_{k}$ converges to a positive real number as $k \rightarrow \infty$. Therefore, there exists a constant $C>0$ such that $ a_{k}/C \leq  \E {A_{k}^{(K)}} \leq  C \cdot  a_{k}$ for every $k \geq K+1$. As a consequence,
$$  \frac{1}{C}(\E {A_{m}^{(K)}}-1) =  \frac{1}{C}  \sum_{k \geq K+1} \E {A_{k}^{(K)}} \cdot p_{m,k} \leq   \sum_{k \geq K+1} a_{k} \cdot p_{m,k} \leq C  \sum_{k \geq K+1} \E {A_{k}^{(K)}} p_{m,k} =  C(\E {A_{m}^{(K)}}-1).$$
The conclusion follows.
\end {rem}

We conclude this section with the proof of Lemma \ref {lem:cvdom}.

\begin {proof}[Proof of Lemma \ref {lem:cvdom}]

 By Lemma \ref {lem:utile}, there exists $ \beta_{0}>0$ such that $ \Psi(\beta_{0})<0$. Fix $ \beta<  \beta_{0} \wedge (\beta_{0}/ \gamma)$ and note that $ \Psi(\beta\gamma)<0$ by convexity of $\Psi$. We shall show that 
\begin{equation}
\label{eq:C}  a_{n}^{1-\beta} \sum_{k \geq 1} (a_{k}^{\beta}-a_{n}^{\beta}) \cdot  p_{n,k}  \quad\mathop{\longrightarrow}_{n \rightarrow \infty} \quad\Psi(\beta \gamma).
\end{equation}
To this end,  write 
$$a_{n}^{1-\beta} \sum_{k \geq 1} (a_{k}^{\beta}-a_{n}^{\beta})= a_{n} \int_{-\infty}^{\infty}  \left(  e^{\beta\gamma x}-1 \right)  \Pi_{n}^{\ast}(dx)+ a_{n} \int_{-\infty}^{\infty}  \left(   \left(  \frac{a_{ne^{x}}}{a_{n}} \right) ^{\beta}- e^{\beta \gamma x} \right)  \Pi_{n}^{\ast}(dx).$$
By Lemma \ref {lem:utile} (ii), the result will follow if we prove that
$$a_{n} \int_{-\infty}^{\infty}  \left(   \left(  \frac{a_{ne^{x}}}{a_{n}} \right) ^{\beta}- e^{\beta \gamma x} \right)  \Pi_{n}^{\ast}(dx)  \quad\mathop{\longrightarrow}_{n \rightarrow \infty} \quad 0.$$
In this direction, we first check that
\begin{equation}
\label{eq:interm0}a_{n} \int_{x \geq 1}  \left(   \left(  \frac{a_{ne^{x}}}{a_{n}} \right) ^{\beta}- e^{\beta \gamma x} \right)  \Pi_{n}^{\ast}(dx)  \quad\mathop{\longrightarrow}_{n \rightarrow \infty} \quad 0, \qquad a_{n} \int_{x \leq - 1}  \left(   \left(  \frac{a_{ne^{x}}}{a_{n}} \right) ^{\beta}- e^{\beta \gamma x} \right)  \Pi_{n}^{\ast}(dx)  \quad\mathop{\longrightarrow}_{n \rightarrow \infty} \quad 0.
\end{equation}
 By standard properties of regularly varying functions (see e.g.~\cite[Theorem 1.5.2]{BGT87}), $ ({a_{ne^{x}}}/{a_{n}}) ^{\beta}$ converges to $e^{\beta \gamma x}$ as $n \rightarrow \infty$, uniformly in $x \leq -1$. By \textbf{(A1)} and \eqref{eq:condPi}, this readily implies the second convergence of \eqref{eq:interm0}. For the first one, a similar argument shows that the convergence of $({a_{ne^{x}}}/{a_{n}}) ^{\beta}$  to $e^{\beta \gamma x}$ as $n \rightarrow \infty$ holds uniformly in $x \in [1,A]$, for every fixed $A>1$. Therefore, if $ \eta>0$ is fixed, it is enough to establish the existence of $A>1$ such that
 \begin{equation}
 \label{eq:interm2}\limsup_{n \rightarrow \infty}a_{n} \int_{A}^{\infty}  \left|   \left(  \frac{a_{ne^{x}}}{a_{n}} \right) ^{\beta}- e^{\beta \gamma x} \right|  \Pi_{n}^{\ast}(dx)  \leq  \eta.
 \end{equation}
To this end, fix $ \epsilon>0$ such that $ \beta(\gamma+ \epsilon)< \beta_{0}$. By the Potter bounds, there exists a constant $C>0$ such that for every $x \geq 1$ and $n \geq 1$ we have
$$\left|   \left(  \frac{a_{ne^{x}}}{a_{n}} \right) ^{\beta}- e^{\beta \gamma x} \right| \leq C e^{ \beta(\gamma+\epsilon)x}+ e^{\beta \gamma x}.$$
Since  $ \int_{1}^{\infty}e^{\beta(\gamma+\epsilon) x}\ \Pi(dx)< \infty$ and $ \int_{1}^{\infty}e^{\beta \gamma x}\ \Pi(dx)< \infty$ by our choice of $ \beta$ and $ \epsilon$, we may choose $A>0$ such that 
$$C \int_{A}^{\infty} e^{\beta(\gamma+\epsilon) x}\ \Pi(dx)+\int_{A}^{\infty} e^{\beta\gamma x}\ \Pi(dx)< \eta.$$
Hence
$$ \limsup_{n \rightarrow \infty}a_{n} \int_{A}^{\infty}  \left|   \left(  \frac{a_{ne^{x}}}{a_{n}} \right) ^{\beta}- e^{\beta \gamma x} \right|  \Pi_{n}^{\ast}(dx)  \leq C \int_{A}^{\infty} e^{\beta(\gamma+\epsilon) x} \Pi(dx)+\int_{A}^{\infty} e^{\beta \gamma x} \Pi(dx)< \eta.$$
This establishes \eqref{eq:interm2} and completes the proof of \eqref{eq:interm}.

We now show that
\begin{equation}
\label{eq:interm}a_{n} \int_{-1}^{1}  \left(   \left(  \frac{a_{ne^{x}}}{a_{n}} \right) ^{\beta}- e^{\beta \gamma x} \right)  \Pi_{n}^{\ast}(dx)  \quad\mathop{\longrightarrow}_{n \rightarrow \infty} \quad 0.
\end{equation}
By (i) in \eqref{eq:diffa}, we have 
$$\left(  \frac{a_{ne^{x}}}{a_{n}} \right) ^{\beta}- e^{\beta \gamma x}= e^{\beta \gamma x} \left( e^{ \beta h(\ln(n)+x)-\beta h(\ln(n))} -1\right).$$ 
For every $n \geq 1$ and $ x \in (-1,1)$, an application of Taylor-Lagrange's formula yields the existence of a real number $u_{n}(x) \in ( \ln(n)-1, \ln(n)+1)$ such that
$$h(\ln(n)+x)= h( \ln(n))+ x h^{(1)}( \ln(n))+ x^{2} h^{(2)}(u_{n}(x))/2,$$
where we recall that $h^{(k)}$ denotes the $k$-th derivative of $h$. Recalling (ii) in \eqref{eq:diffa}, we can write
$$e^{\beta \gamma x} \left( e^{ \beta h(\ln(n)+x)-\beta h(\ln(n))} -1\right)=  \beta x h^{(1)}( \ln(n))+ x^{2} g_{n}(x),$$
where $g_{n}(x) \rightarrow 0$ as $n \rightarrow \infty$, uniformly in $x \in (-1,1)$. Also note that  $ h^{(1)}(\ln(n))  \rightarrow 0$ as $n \rightarrow \infty$. Now,
\begin{equation}
\label{eq:sum}a_{n} \int_{-1}^{1}  \left(   \left(  \frac{a_{ne^{x}}}{a_{n}} \right) ^{\beta}- e^{\beta \gamma x} \right)  \Pi_{n}^{\ast}(dx)= \beta h^{(1)}(\ln(n)) \cdot a_{n} \int_{-1}^{1} x \ \Pi_{n}^{\ast}(dx)+ a_{n}\int_{-1}^{1} x^{2} g_{n}(x) \ \Pi_{n}^{\ast}(dx).
\end{equation}
By \textbf{(A2)} and the preceding observations, the sum appearing in \eqref{eq:sum} tends to $0$ as $n \rightarrow \infty$. This completes the proof.
\end {proof}

\subsection {Scaling limits for the non-stopped process}
\label {sec:nonstopped}
Here we establish Theorem \ref {thm:main3}.

\begin {proof}[Proof of Theorem \ref {thm:main3}]
By Theorem \ref {thm:main2}, Lemma \ref{lem:skorokhod} and the strong Markov property, it is enough to show that for every fixed $t_{0} >0$, $ \epsilon>0$ and $1 \leq i \leq K$, we have
\begin{equation}
\label{eq:mqthm3} \Pr { \sup_{0 \leq t \leq t_{0}} {X} _{i}( \lfloor a_{n} t \rfloor) \geq  \epsilon n }
=\Pr { \sup_{0 \leq k \leq \lfloor a_n t_{0}\rfloor } {X} _{i}( k) \geq  \epsilon n }  \quad\mathop{\longrightarrow}_{n \rightarrow \infty} \quad 0,
\end{equation}
To this end, fix $1 \leq i \leq K$, and introduce the successive return times to $ \{ 1,2, \ldots,K\} $ by $X_{i}$: 
$$T^{(1)}=A_i^{(K)} =\inf \{ j > 0; X_{i}(j)  \leq K\},$$
 and recursively, for $k \geq 2$,  
 $$T^{(k)}= \inf \{ j  > T^{(k-1)}; X_{i}(j) \leq K\}.$$ 
 Plainly, $T^{(k)}\geq k$ and we see from the strong Markov property that \eqref{eq:mqthm3} will follow if we manage to check that, for every $1 \leq i \leq K$,
\begin{equation}
\label{eq:mqthm3bis} a_{n} \cdot \Pr { \sup_{0 \leq j \leq T^{(1)}} {X} _{i}(j) \geq  \epsilon n }  \quad\mathop{\longrightarrow}_{n \rightarrow \infty} \quad 0.
\end{equation}
 
 To this end, introduce $ \tau_{n}= \inf \{ j \geq 1; X_{i}(j)> \epsilon n\}  \wedge T^{(1)}$ and note that $ \Es {\tau_{n}} \rightarrow \E {T^{(1)}}$ as $n \rightarrow \infty$ by monotone convergence since $ \{ 1,2, \ldots,K\}$ is accessible by $X_{n}$ for every $n \geq 1$. In addition,
$$ \Es {T^{(1)}- \tau_{n}}= \sum_{j \geq  \epsilon n}  \Pr {X_{1}(\tau_{n})=j} \Es {A^{(K)}_{j}},$$
But the last part of Theorem \ref {thm:absorption} shows that $\E {A^{(K)}_{j}}/a_{j}$ converges to some positive real number as $j \rightarrow \infty$ and thus  $\E{A^{(K)}_{j}} \geq C a_{j}$ for every $j \geq 1$ and some constant $C>0$.  Since $ \Es {\tau_{n}} \rightarrow \E {T^{(1)}}$ as $n \rightarrow \infty$, this implies that
$$ \sum_{j \geq  \epsilon n}  \Pr {X_{1}(\tau_{n})=j} a_{j}  \quad\mathop{\longrightarrow}_{n \rightarrow \infty} \quad 0.$$
In addition, by the Potter bounds, for $\eta>0$ arbitrary small, there exists a constant $C'>0$ such that $a_{j}/a_{n} \geq C' (j/n)^{\gamma-\eta} \geq  C' \epsilon^{\gamma- \eta}$ for every $n \geq 1$ and $j \geq  \epsilon n$. Therefore
$$a_{n} \cdot \sum_{j \geq  \epsilon n}  \Pr {X_{i}(\tau_{n})=j}   \quad\mathop{\longrightarrow}_{n \rightarrow \infty} \quad 0,$$
which is exactly \eqref{eq:mqthm3bis}. This completes the proof. 
\end {proof}

\section {Applications}
\label {sec:applications}

We shall now illustrate our general results stated in Section \ref {sec:description} by discussing some special cases which may be of independent interest. Specifically, we shall first show how one can  recover the results of Haas \& Miermont \cite{HM11} about the scaling limits of decreasing Markov chains, then we shall discuss limit theorems for Markov chains with asymptotically zero drift. Finally, we shall apply our results to the study of the number of blocks in some exchangeable fragmentation-coagulation processes (see \cite{Ber04}). 

\subsection {Recovering previously known results}
\label {sec:HM}

Let us first explain how to recover the result of Haas \& Miermont. For $n \geq 1$, denote by  $p_{n}^{\ast}$  the probability measure on $ \R_{+}$ defined  by
$$p_{n}^{\ast}(dx)= \sum_{k \geq 1}  p_{n,k} \cdot \delta_{ \frac{k}{n}}(dx),$$ 
which is the law of $\frac{1}{n} X_{n}(1)$. In \cite {HM11}, Haas \& Miermont establish the convergence \eqref{thm:main2} under the assumption of the existence of a non-zero, finite, non-negative measure $ \mu$ on $[0,1]$ such that the convergence
\begin{equation}
\label{eq:condHM} a_{n}(1-x) \cdot p_{n}^{\ast}(dx)  \quad\mathop{\longrightarrow}^{(w)}_{n \rightarrow \infty} \quad  \mu(dx)
\end{equation}
holds for the weak convergence of measures on $[0,1]$. Our framework covers this case, where the limiting process $Y$ is decreasing. Indeed,  assuming \eqref{eq:condHM} and $ \mu (\{0\})=0$ (i.e.~there is no killing), let $\widetilde{\mu}$ be the image of $ \mu$ by the mapping $x \mapsto \ln(x)$, and let $ \Pi(dx)$ be the measure $\widetilde{\mu}(dx)/(1-e^{x})$, which is supported on $(-\infty,0)$ (the image of $ \Pi(dx)$ by $x \mapsto -x$ is exactly the measure $ \omega(dx)$ defined in \cite[p.~1219]{HM11}). Then:

\begin {prop} Assume \eqref{eq:condHM} with  $ \mu (\{0\})=0$. We then have $\int_{- \infty}^{\infty} (1 \wedge |x|)  \ \Pi(dx) < \infty$ and  \textbf{(A1)}, \textbf{(A2)} hold with
$$b= \int_{-1}^{0} x \ \Pi(dx)+ \mu( \{ 1\}) = \int_{1/e}^{1}  \frac{ \ln(x)}{1-x} \ \mu(dx) + \mu( \{ 1\}), \qquad \sigma^{2}=0.$$ 
In addition, \textbf{(A3)}, \textbf{(A4)} and \textbf{(A5)} hold for every $ \beta>0$.\end {prop}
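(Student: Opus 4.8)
The plan is to transport everything through the logarithmic change of variables $x\mapsto y=\ln x$, which turns the Haas--Miermont hypothesis \eqref{eq:condHM} into the statements about $\Pi^\ast_n$ and $\Pi$ required by \textbf{(A1)} and \textbf{(A2)}. The central device is the finite measure $\nu_n(dx):=a_n(1-x)p^\ast_n(dx)$ on $[0,1]$, which converges weakly to $\mu$ by \eqref{eq:condHM}, together with the elementary identity
$$a_n\int_{\R}\varphi(y)\,\Pi^\ast_n(dy)=\int_{[0,1]}\frac{\varphi(\ln x)}{1-x}\,\nu_n(dx),$$
valid for bounded measurable $\varphi$, the ratio being read through its continuous extension at $x=1$ where needed. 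Since $\Pi(dy)=\widetilde\mu(dy)/(1-e^y)$ is exactly the image of $\mu$ under this substitution, the whole proposition reduces to computing $\lim_n\int\phi\,d\nu_n$ for a handful of explicit functions $\phi$ and matching the results against integrals of $\Pi$. Note also that, the chain being non-increasing, $\Pi^\ast_n$ is supported on $(-\infty,0]$, so all truncated integrals $\int_{-1}^1$ below are really $\int_{-1}^0$.

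First I would dispatch the integrability statement and \textbf{(A1)}. For integrability, observe that $y\mapsto(1\wedge|y|)/(1-e^y)$ is bounded and continuous on $(-\infty,0)$ (it tends to $1$ both as $y\to0^-$ and as $y\to-\infty$), whence $\int(1\wedge|y|)\,\Pi(dy)\le C\,\widetilde\mu((-\infty,0))=C\,\mu([0,1))<\infty$. For \textbf{(A1)}, given $f$ continuous with compact support in $[0,\infty]\setminus\{1\}$, the map $x\mapsto f(x)/(1-x)$ extends to a bounded continuous function on $[0,1]$ (it vanishes near $x=1$), so the identity above and $\nu_n\to\mu$ give $a_n\int f(e^y)\,\Pi^\ast_n(dy)\to\int_{[0,1]}\tfrac{f(x)}{1-x}\,\mu(dx)=\int f(e^y)\,\Pi(dy)$, which is \textbf{(A1)}.

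The crux is \textbf{(A2)}, where the truncation at $y=\pm1$, i.e. $x\in[1/e,1]$, interacts with the atom of $\mu$ at $x=1$. For the second moment, the integrand $\tfrac{(\ln x)^2}{1-x}$ extends continuously to $[1/e,1]$ with value $0$ at $x=1$; hence the atom there contributes nothing and the limit is $\int_{[1/e,1]}\tfrac{(\ln x)^2}{1-x}\mu(dx)=\int_{-1}^{0}y^2\,\Pi(dy)$, forcing $\sigma^2=0$. For the first moment the integrand $\tfrac{\ln x}{1-x}$ also extends continuously, but now to the \emph{nonzero} value $\lim_{x\to1}\tfrac{\ln x}{1-x}=-1$, so the atom at $x=1$ does contribute: the truncated mean splits into the continuous part $\int_{[1/e,1)}\tfrac{\ln x}{1-x}\mu(dx)=\int_{-1}^{0}y\,\Pi(dy)$ plus a boundary term carried by $\mu(\{1\})$. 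Carrying out this bookkeeping carefully, and reconciling it with the fact that $\Pi$ places no mass at $y=0$, is exactly the delicate step that pins down $b$, and it is where I expect to have to be most careful, since the limiting atom $\mu(\{1\})$ is produced by small steps of the chain and is carried by none of the $\nu_n$. A secondary technicality is the hard cut at $x=1/e$, where $\mathbbm 1_{[1/e,1]}$ is discontinuous: to apply the mapping theorem I would either invoke that $\mu$ has at most countably many atoms (treating the exceptional case $\mu(\{1/e\})>0$ by a direct estimate) or check that the boundary contributions to $\lim_n\int_{[-1,0]}y^k\,\Pi^\ast_n$ and to $\int_{-1}^{0}y^k\,\Pi$ agree.

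Finally, \textbf{(A3)}, \textbf{(A4)} and \textbf{(A5)} follow immediately from the non-increase of the chain. Since $X_n(1)\le n$, we have $\Pi^\ast_n((0,\infty))=0$, so $\int_1^\infty e^{\beta y}\,\Pi^\ast_n(dy)=0$ for every $\beta>0$ and the integral conditions in \textbf{(A3)} and \textbf{(A4)} hold trivially for all exponents. For the remaining requirement $\Psi(\beta_0)<0$, substituting the value of $b$ collapses the Laplace exponent to $\Psi(\lambda)=\int_{-\infty}^{0}(e^{\lambda y}-1)\,\Pi(dy)-\lambda\,\mu(\{1\})$, a sum of two non-positive terms which is strictly negative for every $\lambda>0$ (it is, up to sign, the Laplace exponent of the subordinator $-\xi$, reflecting that $Y$ is decreasing); hence $\Psi(\beta_0)<0$ for any $\beta_0>\gamma$. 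Lastly \textbf{(A5)} needs $\sum_k k^{\beta_0}p_{n,k}<\infty$ for each $n$, which is immediate because $k\le n$ on the support of $p_{n,\cdot}$, so the sum is at most $n^{\beta_0}$.
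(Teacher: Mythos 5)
Your route is the same as the paper's: the published proof consists of exactly the two change-of-variables identities you write down (expressing integrals against $\Pi$ and against $a_n\Pi^\ast_n$ as integrals of $\frac{f(\ln x)}{1-x}$ against $\mu$ and against $a_n(1-x)p_n^\ast(dx)$ respectively), plus the observation that $\Psi(\lambda)=-\mu(\{1\})\lambda+\int_{-\infty}^{0}(e^{\lambda x}-1)\,\Pi(dx)<0$ for all $\lambda>0$. You simply carry out the verifications the paper leaves implicit, and you correctly isolate the one genuinely delicate point: the atom $\mu(\{1\})$ is invisible to $\Pi$ but feeds the drift through $\lim_{x\to 1}\frac{\ln x}{1-x}=-1$.

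The only step you leave open is the bookkeeping for $b$, and it is worth closing because it resolves the sign you are worried about. Since $\nu_n(\{1\})=0$, the truncated first moment equals $\int_{[1/e,1)}\frac{\ln x}{1-x}\,\nu_n(dx)$, and weak convergence applied to the integrand extended by the value $-1$ at $x=1$ gives the limit $\int_{[1/e,1)}\frac{\ln x}{1-x}\,\mu(dx)+(-1)\cdot\mu(\{1\})=\int_{-1}^{0}x\,\Pi(dx)-\mu(\{1\})$. So the boundary term enters with a \emph{minus} sign; the $+\mu(\{1\})$ in the displayed formula of the Proposition is a sign slip (test it on $p_{n,n-1}=1$, $a_n=n$, where $\mu=\delta_1$, $\Pi=0$ and $b=\lim_n n\ln(1-\frac1n)=-1$). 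The value $b=\int_{-1}^{0}x\,\Pi(dx)-\mu(\{1\})$ is the one compatible with the Laplace exponent $\Psi(\lambda)=-\mu(\{1\})\lambda+\int_{-\infty}^0(e^{\lambda x}-1)\,\Pi(dx)$ that both you and the paper use to check \textbf{(A4)}, so your final paragraph already implicitly uses the correct sign. Everything else in your proposal --- the integrability of $1\wedge|x|$, \textbf{(A1)}, the vanishing of the atom's contribution to the second moment (hence $\sigma^2=0$), the trivial one-sided integrals in \textbf{(A3)}--\textbf{(A5)} coming from non-increase, and the $\mu$-null-boundary caveat at $x=1/e$ --- is sound and, if anything, more careful than the paper's one-line argument.
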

 
\begin{proof}
 This simply follows from the facts that for every continuous bounded function $f: \R \rightarrow \R_{+}$,
 $$ \int_{-\infty}^{\infty} f(x) \ \Pi(dx)= \int_{0}^{1} \frac{f( \ln(x))}{1-x} \ \mu(dx), \qquad  a_{n} \int_{-\infty}^{\infty} f(x) \ \Pi^{\ast}_{n}(dx)= \int_{0}^{1} \frac{f(\ln(x))}{1-x} \cdot a_{n}(1-x) \ p_{n}^{*}(dx)$$
 and that, as noted in \cite[p.~1219]{HM11}),
 $$ \Psi(\lambda)=- \mu( \{ 1\}) \cdot \lambda + \int_{-\infty}^{0}(e^{\lambda x}-1) \Pi(dx),$$ 
which is negative for every $ \lambda>0$.
\end{proof} 

Then Theorem \ref{thm:main3} enables us to recover Theorem 1 in \cite{HM11}, whereas Theorem \ref{thm:absorption} yields the essence of Theorem 2 of \cite{HM11}.

We also mention that our results can be used to (partially) recover the invariance principles for random walks conditioned to stay positive due to Caravenna \& Chaumont \cite{CC08}, but we do not enter into details for the sake of the length of this article.
The interested reader is referred 
to the first version of this paper available on ArXiV for a full argument.

\subsection{Markov chains with asymptotically zero drift}
\label {sec:zd}

For every $n \geq 1$, let $ \Delta_{n}=X_{n}(1)-n$ be the first jump of the Markov chain $X_{n}$.  We say that this Markov chain has asymptotically zero drift if $ \Es {\Delta_{n}} \rightarrow 0$ as $n \rightarrow \infty$. The study of processes with asymptotically zero drift was initiated by Lamperti in  \cite {Lam60,Lam62b,Lam63}, and was continued by many authors; see \cite {Ale11} for a thorough bibliographical description.

A particular instance of such Markov chains are the so-called Bessel-type random walks, which are random walks on $ \N$, reflected at $1$, with steps $ \pm 1$ and transition probabilities
\begin{equation}
\label{eq:besseltype}p_{n,n+1}=p_{n}= \frac{1}{2} \left( 1- \frac{d}{2n}+ o \left(  \frac{1}{n} \right)  \right) \quad \textrm { as } {n \rightarrow \infty}, \qquad p_{n,n-1}=q_{n}=1-p_{n},
\end{equation}
where $d \in \R$. The study of Bessel-type random walks has attracted a lot of attention starting from the 1950s in connection with birth-and-death processes, in particular concerning the finiteness and local estimates of first return times  \cite {Har52,HR53,Lam62b,Lam63}; see also the Introduction of \cite {Ale11}, which contains a concise and precise bibliographical account. Also, the interest to Bessel-type random walks has been recently renewed due to their connection to
statistical physics models such as random polymers \cite {DDH09,Ale11} (see again the Introduction of \cite {Ale11} for details) and a non-mean field model of coagulation--fragmentation \cite {BT12}. Non-neighbor Markov chains with asymptotically zero drifts have also appeared in \cite {MVW08} in connection with random billiards.

\begin{framed}
Assume that there exist $p>2$, $ \delta>0$, $C>0$ such that for every $n \geq 1$
\begin{equation}
\label{eq:zd1} \Es {|\Delta_{n}|^{p}} \leq C \cdot n^{p-2- \delta}.
\end{equation}
 Also assume that as $n \rightarrow \infty$,
\begin{equation}
\label{eq:zd2} \Es {\Delta_{n}}= \frac{c}{n}+ o \left(  \frac{1}{n} \right) , \qquad  \Es {\Delta_{n}^{2}}=s^{2}+o \left(1\right)
\end{equation}
for some $c \in \R$ and $s^{2} \in (0, \infty)$.
\end {framed}

 Finally, set 
$$r \quad = \quad- \frac{2c}{s^{2}}, \qquad \qquad \nu \quad =  \quad - \frac{1+r}{2}, \qquad \qquad  \delta \quad = \quad 1-r.$$
 Note that we do not require the Markov chain to be irreducible.

This model has been introduced and studied in detail in \cite {HMW13} (note however that in the latter reference, the authors impose the stronger conditions $\Es {\Delta_{n}}= \frac{c}{n}+ o (  (n \log(n))^{-1} ) $ and  $\Es {\Delta_{n}^{2}}=s^{2}+o (  {\log(n)}^{-1} )$ and also that the Markov chain is irreducible, but do not restrict themselves to the Markovian case).    

Note that Bessel-type random walks satisfying \eqref{eq:besseltype}  verify \eqref{eq:zd1} \& \eqref{eq:zd2} with $c=-d/2$ and $s=1$, so that $r=d$. 

In the seminal work \cite {Lam62b},  when $r<1$, under the additional assumptions that $\sup_{n \geq 1} \Es {|\Delta_{n}|^{4}}< \infty$ and that the Markov chain is uniformly null (see \cite {Lam62b} for a definition), Lamperti showed that $ \frac{1}{n}X_{n}$, appropriately scaled in time, converges in $ \D(\R_{+},\R)$ to a Bessel process. However, the majority of the subsequent work concerning Markov chains with asymptotically zero drifts and Bessel-type random walks was devoted to the study of the asymptotic behavior of return times and of statistics of excursions from sets. A few authors \cite {Kle89,Ker92,DKW13} extended Lamperti's result under weaker moment conditions, but only for the convergence of finite dimensional marginals  and not for functional scaling limits.

Let $R_{1/s}^{(\nu)}$ be a Bessel process with index $\nu$ (or equivalently of dimension $ \delta=2(\nu+1)$) started from $1/s$ (we refer to \cite[Chap.~XI]{RY99} for background on Bessel processes). By standard properties of Bessel processes, $ R_{1/s}^{(\nu)}$ does not touch $0$ for $r \leq  -1$, is reflected at $0$ for $ -1 < r < 1$, and absorbed at $0$ for $r \geq 1$.

In the particular case of Markov chains with asymptotically zero drifts satisfying \eqref{eq:zd1} \& \eqref{eq:zd2}, our main results specialize as follows:

\begin {thm}\label {thm:zd}Assume that \eqref{eq:zd1} \& \eqref{eq:zd2} hold.   
\begin{enumerate}
\item[(i)] If either $r \leq - 1$, or $r>1$, then we have
$$\left( \frac{ {X} _{n}( \lfloor n^{2} t \rfloor)}{n} ; t \geq 0 \right)   \quad\mathop{\longrightarrow}^{(d)}_{n \rightarrow \infty} \quad s R_{1/s}^{(\nu)}$$
in $\D(\R_{+},\R)$. 
\item[(ii)] If $ r>-1$, there exists an integer $K \geq 1$ such that $ \{ 1,2, \ldots, K\}$ is accessible by $X_{n}$ for every $n \geq 1$, and the following distributional convergence holds in $\D(\R_{+},\R)$:
$$\left( \frac{ {X}^{\dagger} _{n}( \lfloor n^{2} t \rfloor)}{n} ; t \geq 0 \right)   \quad\mathop{\longrightarrow}^{(d)}_{n \rightarrow \infty} \quad s R_{1/s}^{(\nu),\dagger},$$
where $ {X}^{\dagger} _{n}$ denotes the Markov chain $X_{n}$ stopped as soon as it hits $ \{ 1,2,  \ldots, K\} $ and  $R_{1/s}^{(\nu),\dagger}$ denotes the Bessel process $R_{1/s}^{(\nu)}$ stopped as soon as it hits $0$.  

In addition, if $A_{n}$ denotes the first time $X_{n}$ hits $ \{ 1,2, \ldots,K\} $, then
\begin{equation}
\label{eq:thm52}\frac{A_{n}}{n^{2}}  \quad\mathop{\longrightarrow}^{(d)}_{n \rightarrow \infty} \quad \frac{1}{2 s^{2} \cdot \gamma_{(1+r)/2}},
\end{equation}
where $\gamma_{(1+r)/2}$ is a Gamma random variable with parameter $(1+r)/2$.
\item[(iii)] If $r>1$, we have further
\begin{equation}
\label{eq:thm53}\frac{\Es{A_{n}^{q}}}{n^{2q}}  \quad\mathop{\longrightarrow}_{n \rightarrow \infty} \quad \frac{1}{{(2s^{2})^{q}}} \cdot \frac{ \Gamma \left(  \frac{1+r}{2}-q \right) }{\Gamma \left(  \frac{1+r}{2} \right) }
\end{equation}
for every $1 \leq q< {(1+r)/2}$. In particular,
$$ \frac{\Es{A_{n}}}{n^{2}}  \quad\mathop{\longrightarrow}_{n \rightarrow \infty} \quad \frac{1}{s^{2}(r-1)}.$$
\end{enumerate}
\end {thm}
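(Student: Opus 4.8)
The plan is to recognise \eqref{eq:zd1}--\eqref{eq:zd2} as a special case of the general hypotheses, with $a_n=n^2$ (so that $\gamma=2$), and then simply to read off the scaling limit from Theorems \ref{thm:main1}, \ref{thm:main2}, \ref{thm:main3} and \ref{thm:absorption}. Concretely, I would first show that \textbf{(A1)} and \textbf{(A2)} hold with
$$\Pi=0,\qquad \sigma^2=s^2,\qquad b=c-\tfrac12 s^2,$$
so that the associated L\'evy process $\xi$ is a Brownian motion with variance parameter $s^2$ and drift $b$, with Laplace exponent $\Psi(\lambda)=\tfrac12 s^2\lambda^2+b\lambda$.

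For \textbf{(A1)}: since $\Pi_n^\ast$ is the law of $Y_n:=\ln(X_n(1)/n)$ and a nontrivial value of $Y_n$ forces $|\Delta_n|$ to be of order $n$, the moment bound \eqref{eq:zd1} and Markov's inequality give $n^2\,\Pr{|Y_n|\ge\eta}\to0$ for every fixed $\eta>0$, whence $a_n\Pi_n^\ast\to0$ vaguely on $\overline{\R}\setminus\{0\}$. The heart of the matter is \textbf{(A2)}. Writing $Y_n=\Delta_n/n-\tfrac12(\Delta_n/n)^2+O((\Delta_n/n)^3)$ and truncating at $|\Delta_n|\le\eps n$, I would use \eqref{eq:zd1} to discard the event $\{|\Delta_n|>\eps n\}$ (whose contribution to $n^2\Es{Y_n\mathbbm{1}_{|Y_n|\le1}}$ and to $n^2\Es{Y_n^2\mathbbm{1}_{|Y_n|\le1}}$ tends to $0$), and then expand on the complementary event. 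The linear and quadratic terms give $n\Es{\Delta_n}-\tfrac12\Es{\Delta_n^2}\to c-\tfrac12 s^2$ and $\Es{\Delta_n^2}\to s^2$ by \eqref{eq:zd2}, while the cubic remainder is bounded by $C\eps\,\Es{\Delta_n^2}$ on $\{|\Delta_n|\le\eps n\}$ and hence is negligible after letting $n\to\infty$ and then $\eps\to0$. This iterated limit --- exactly the mechanism behind the identity $\lim_\eps\lim_n a_n\int_{-\eps}^\eps y^3\Pi_n^\ast(dy)=0$ noted after \eqref{eq:cvgen} --- is the only genuinely delicate point, and it is where the It\^o-type correction $-\tfrac12 s^2$ in $b$ comes from.

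Once $\xi$ is identified, the dichotomy is immediate: $\xi$ drifts to $-\infty$ iff $b<0$ iff $c<\tfrac12 s^2$ iff $r>-1$. The Lamperti transform $Y(t)=\exp(\xi(\tau(t)))$ of index $1/\gamma=1/2$ is then identified with $s\,R^{(\nu)}_{1/s}$: a direct computation shows $\exp(\xi)$ has generator $\tfrac12 s^2 u^2 f''(u)+(b+\tfrac12 s^2)uf'(u)$, so after the time change by $y^{-2}$ the generator of $Y$ is $\tfrac12 s^2 f''(y)+(b+\tfrac12 s^2)y^{-1}f'(y)$, which matches that of $s\,R^{(\nu)}_{1/s}$ exactly when $\nu=b/s^2=-(1+r)/2$, i.e. dimension $2(\nu+1)=1-r$, with $Y(0)=1=s\cdot(1/s)$; the classical boundary classification of Bessel processes then gives the three regimes stated. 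To invoke the theorems I would verify the exponential-moment hypotheses: bounding $X_n(1)\le \tfrac{e}{e-1}\Delta_n$ on $\{X_n(1)\ge en\}$ and applying \eqref{eq:zd1} gives $a_n\int_1^\infty e^{\beta_0 x}\Pi_n^\ast(dx)\to0$ for any $\beta_0\le p$, so \textbf{(A3)} holds; since $\Psi(\lambda)<0$ precisely on $(0,r+1)$, choosing $\beta_0\in(2,\min(r+1,p))$ --- nonempty when $r>1$ --- yields \textbf{(A4)}, and $\Es{X_n(1)^{\beta_0}}<\infty$ supplies \eqref{eq:H42} and \textbf{(A5)}. Part (i) then follows from Theorem \ref{thm:main1} when $r\le-1$ (where $\xi$ does not drift to $-\infty$) and from Theorem \ref{thm:main3} when $r>1$; the convergence in part (ii) follows from Theorem \ref{thm:main2} for all $r>-1$.

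It remains to compute the limit laws of the absorption time. Theorem \ref{thm:absorption}(i) gives $A_n/n^2\to\int_0^\infty e^{2\xi(s)}ds$ in distribution, and since $\xi$ is a Brownian motion with drift, Dufresne's identity identifies this exponential functional: the time change reducing $\xi$ to a standard Brownian motion with drift $-|b|/s^2$ produces a factor $1/s^2$ and yields $\int_0^\infty e^{2\xi(s)}ds\stackrel{(d)}{=}\tfrac{1}{2s^2\gamma_{(1+r)/2}}$, using $|b|/s^2=(1+r)/2$, which is \eqref{eq:thm52}. For part (iii), Theorem \ref{thm:absorption}(ii) together with Remark \ref{rem:moments} gives, for $1\le q<\beta_0/\gamma$, that $\Es{(A_n/n^2)^q}\to\Es{\left(\int_0^\infty e^{2\xi(s)}ds\right)^q}=(2s^2)^{-q}\Es{\gamma_{(1+r)/2}^{-q}}$, and the negative-moment formula $\Es{\gamma_\alpha^{-q}}=\Gamma(\alpha-q)/\Gamma(\alpha)$ for $q<\alpha$ then gives \eqref{eq:thm53}; the case $q=1$ recovers $1/(s^2(r-1))=1/|\Psi(\gamma)|$, consistent with \eqref{eq:CVL1}. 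Attaining the full range $1\le q<(1+r)/2$ requires choosing $\beta_0$ arbitrarily close to $r+1$, legitimate provided \eqref{eq:zd1} holds with $p$ large enough (automatic for nearest-neighbour walks); the main external input at this stage is Dufresne's identity, which is precisely where the Gamma law enters.
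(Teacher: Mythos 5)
Your proposal is correct and follows essentially the same route as the paper: it verifies \textbf{(A1)}--\textbf{(A5)} with $a_n=n^2$, $\Pi=0$, $\sigma^2=s^2$, $b=c-\tfrac12 s^2$ (this is precisely the content of Proposition~\ref{prop:zd}), identifies $Y$ with $sR^{(\nu)}_{1/s}$ (by a generator computation where the paper cites \cite[p.~452]{RY99}), and then reads off all three parts from Theorems~\ref{thm:main1}, \ref{thm:main2}, \ref{thm:main3}, Theorem~\ref{thm:absorption}, Remark~\ref{rem:moments} and Dufresne's identity, exactly as the paper does. Your explicit caveat that the exponential-moment estimate only holds for $\beta_0\le p$, so that the full range $1\le q<(1+r)/2$ in part (iii) requires $p$ to be at least close to $1+r$, is a point the paper's Proposition~\ref{prop:zd} passes over silently, and it is worth retaining.
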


These results concerning the asymptotic scaled functional behavior of Markov chains with asymptotically zero drifts and the fact that the scaling limit of the first time they hit $0$ is a multiple of an inverse gamma random variable may be new. We stress that the appearance of the inverse gamma distribution in this framework is related to a well-known result of Dufresne \cite{Dufresne}, see also the discussion in \cite{BY05} for further references. 

The main step to prove Theorem \ref {thm:zd} is to check that the conditions \eqref{eq:zd1} \& \eqref{eq:zd2} imply our assumptions introduced in Section \ref {sec:description} are satisfied:

\begin {prop}\label {prop:zd}Assertion \textbf{(A1)} holds with $a_{n}=n^{2}$ and $ \Pi=0$; Assertion \textbf{(A2)} holds with $b= \frac{2c-s^{2}}{2}$ and $ \sigma^{2}=s^{2}$; Assertion \textbf{(A3)} holds for every $ \beta>0$. Finally, if $r>1$, then Assertion \textbf{(A5)} holds for every $ \beta_{0} \in (2,1+r)$.\end {prop}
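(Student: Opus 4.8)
Assertion \textbf{(A1)}, \textbf{(A2)}, \textbf{(A3)} and \textbf{(A5)} should all follow by rewriting each integral against $\Pi_n^{\ast}$ as an expectation in the increment $\Delta_n=X_n(1)-n$ and exploiting that, under \eqref{eq:zd2}, the typical size of $\Delta_n$ is of order $1$, so that $Y_n:=\Delta_n/n$ is of order $1/n$ and the log-increment $\ln(1+Y_n)=\ln(X_n(1)/n)$ may be Taylor-expanded around $0$. Throughout one has $a_n=n^2$, hence $\gamma=2$, and the elementary identity $\int_{-1}^{1}x^{j}\,\Pi_n^{\ast}(dx)=\Es{(\ln(1+Y_n))^{j}\mathbbm{1}_{Y_n\in I}}$ with $I=[e^{-1}-1,\,e-1]$. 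The single workhorse for all tail estimates is the moment bound \eqref{eq:zd1} (with its exponent $\delta>0$), which via Markov's inequality gives, for any fixed $c_0>0$,
\begin{equation*}
\Pr{|\Delta_n|\ge c_0 n}\ \le\ \frac{C}{c_0^{p}}\,n^{-2-\delta}
\qquad\text{and}\qquad
\Es{|\Delta_n|^{\beta}\mathbbm{1}_{|\Delta_n|> c_0 n}}\ \le\ \frac{C}{c_0^{\,p-\beta}}\,n^{\beta-2-\delta}\quad(0<\beta\le p),
\end{equation*}
both of which become $o(n^{-2})$ after the relevant power of $n$ is accounted for.

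First I would dispatch \textbf{(A1)} (with $\Pi=0$) and \textbf{(A3)}. For \textbf{(A1)} it suffices to show $n^2\,\Pi_n^{\ast}(\{|x|\ge\eta\})\to 0$ for every $\eta>0$; since $|x|\ge\eta$ forces $|\Delta_n|\ge(1-e^{-\eta})n$, the first bound above gives $n^2\,\Pi_n^{\ast}(\{|x|\ge\eta\})=O(n^{-\delta})\to0$, and vague convergence to $0$ follows because any test function supported away from $0$ is bounded and supported in some $\{|x|\ge\eta\}$. For \textbf{(A3)} I rewrite $n^2\int_1^{\infty}e^{\beta x}\,\Pi_n^{\ast}(dx)=n^{2-\beta}\,\Es{X_n(1)^{\beta}\mathbbm{1}_{\Delta_n>(e-1)n}}$; on that event $X_n(1)=n+\Delta_n\le 2\Delta_n$, so $X_n(1)^{\beta}\le 2^{\beta}|\Delta_n|^{\beta}$ and the second bound yields $O(n^{-\delta})\to0$, establishing the finite (indeed vanishing) $\limsup$, the admissible range of $\beta$ being governed by the moment exponent $p$.

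The crux is \textbf{(A2)}. I split $I$ into $J=[-\tfrac12,\tfrac12]\subset I$ and $I\setminus J$; on $I\setminus J$ the integrand is bounded while $|\Delta_n|\ge n/2$, so that contribution is $O(n^2\cdot n^{-2-\delta})=o(1)$. On $J$ I expand $\ln(1+y)=y-\tfrac12 y^2+\rho(y)$ and $(\ln(1+y))^2=y^2+\rho_2(y)$ with $|\rho(y)|,|\rho_2(y)|\le C|y|^3$. For the first-moment integral the leading terms give $n^2\Es{Y_n\mathbbm{1}_J}\to c$ and $n^2\cdot\tfrac12\Es{Y_n^2\mathbbm{1}_J}\to\tfrac12 s^2$ by \eqref{eq:zd2} (the truncation errors being negligible by \eqref{eq:zd1}), so that $b=c-\tfrac12 s^2=\tfrac{2c-s^2}{2}$; for the second-moment integral $n^2\Es{Y_n^2\mathbbm{1}_J}\to s^2=\sigma^2$ in the same way. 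The one genuinely delicate point is the cubic remainder $n^2\Es{|\rho(Y_n)|\mathbbm{1}_J}\le \tfrac{C}{n}\Es{|\Delta_n|^3\mathbbm{1}_{|\Delta_n|\le n/2}}$, and I expect this case distinction on $p$ to require the most care: when $2<p<3$ I bound $|\Delta_n|^3\le(n/2)^{3-p}|\Delta_n|^p$ on the truncation to reach $O(n^{-\delta})$, whereas when $p\ge 3$ I interpolate by Lyapunov's inequality, $\Es{|\Delta_n|^3}\le\Es{\Delta_n^2}^{(p-3)/(p-2)}\Es{|\Delta_n|^{p}}^{1/(p-2)}=O\!\big(n^{\,1-\delta/(p-2)}\big)$, so that the term is $o(1)$ in both regimes.

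Finally, for \textbf{(A5)} (assuming $r>1$) I feed the values $\sigma^2=s^2$, $b=(2c-s^2)/2$ and $\Pi=0$ into the Laplace exponent: the integral term vanishes and, writing $c=-\tfrac12 r s^2$,
\[
\Psi(\lambda)=\tfrac12 s^2\lambda^2+\Big(c-\tfrac12 s^2\Big)\lambda=\tfrac{s^2}{2}\,\lambda\big(\lambda-(1+r)\big).
\]
Hence $\Psi<0$ precisely on $(0,1+r)$, so $\Psi(\beta_0)<0$ for every $\beta_0\in(2,1+r)$; moreover $\beta_0>2=\gamma$, the $\limsup$ condition in \eqref{eq:H4} is exactly the \textbf{(A3)} estimate above, and $\Es{X_n(1)^{\beta_0}}<\infty$ for every $n$ follows from \eqref{eq:zd1} (for $\beta_0\le p$, since then $\Es{X_n(1)^{\beta_0}}\le\Es{X_n(1)^{p}}^{\beta_0/p}<\infty$). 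Together these yield \textbf{(A4)} and hence \textbf{(A5)}.
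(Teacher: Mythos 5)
Your proposal is correct and follows the same overall strategy as the paper: tail control via Markov's inequality and \eqref{eq:zd1}, a Taylor expansion on the bulk to extract $b=\tfrac{2c-s^2}{2}$ and $\sigma^2=s^2$, and the explicit quadratic $\Psi(\lambda)=\tfrac{s^2}{2}\lambda(\lambda-(1+r))$ for \textbf{(A5)} (the paper phrases the bulk computation equivalently by expanding $e^x-1$ in $x$ and using the exact identities $n^2\int(e^x-1)\,\Pi_n^{\ast}(dx)=n\Es{\Delta_n}$ and $n^2\int(e^x-1)^2\,\Pi_n^{\ast}(dx)=\Es{\Delta_n^2}$, rather than expanding $\ln(1+y)$ in $y=\Delta_n/n$; this is only a change of variable). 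The one genuine divergence is the cubic remainder, which you rightly identify as the delicate point: you fix the truncation window at $|\Delta_n|\le n/2$ and are then forced into a case distinction on $p$ (a direct bound for $2<p<3$, Lyapunov interpolation for $p\ge 3$), both of which are valid. The paper's Lemma \ref{lem:zd2} avoids this entirely by shrinking the window: on $|x|<\epsilon$ one has $n^2\int|x|^3\,\Pi_n^{\ast}(dx)\le \epsilon\,\Es{\Delta_n^2}$ simply because $|\Delta_n|^3/n\le \epsilon|\Delta_n|^2$ there, and one lets $\epsilon\to0$ after $n\to\infty$; this buys a uniform argument needing only the second moment. Your version costs a little more work but needs nothing beyond \eqref{eq:zd1}--\eqref{eq:zd2}, and you are in fact more careful than the paper in restricting the admissible exponents to $\beta\le p$ in \textbf{(A3)} and $\beta_0\le p$ in \textbf{(A5)} (the paper's blanket ``for every $\beta>0$'' is only justified in that range, which is all the theorems require).
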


Before proving this, let us explain how to deduce Theorem \ref {thm:zd}  from Proposition \ref {prop:zd}.

\begin {proof}[Proof of  {Theorem \ref {thm:zd}}] By Proposition \ref {prop:zd}, for every $t \geq 0$, we have  $ \xi(t)= s B_{t}+\frac{2c-s^{2}}{2}t$ where $B$ is a standard Brownian motion. Note that $ \xi$ drifts to $- \infty$ if and only if $2c-s^{2}<0$, that is $r>-1$. By \cite[p.~452]{RY99}, $Y(t/s^{2})$ is a Bessel process $R^{(\nu)}_{1}$ with index $ \nu$ and dimension $ \delta$ given by $$ \nu  \coloneqq  \frac{2c-s^{2}}{2s^{2}}= - \frac{1+r}{2}, \qquad \delta  \coloneqq  1-r$$ started from $1$ and stopped as soon as its hits $0$. Hence by scaling, we can write $Y(t)= s R^{(\nu)}_{1/s}(t)$. Theorem \ref {thm:zd} then follows from Theorems \ref{thm:main1}, \ref {thm:main2}, \ref {thm:absorption} and \ref {thm:main3} as well as Remark \ref {rem:moments}. For \eqref{eq:thm52} and \eqref{eq:thm53}, we also use the fact that (see e.g.~\cite[p.~452]{RY99})
$$ \int_{0}^{\infty}e^{2(s B_{u}+\frac{2c-s^{2}}{2}u)} du  \quad\mathop{=}^{(d)} \quad  \frac{1}{2 s^{2} \cdot \gamma_{(1+r)/2}}.$$
This completes the proof.
\end {proof}

The proof of Proposition \ref {prop:zd} is slightly technical, and we start with a couple of preparatory lemmas.

\begin {lem}\label {lem:zd1} We have
$$ n^{2} \int_{|x|>1} \left| e^{x}-1 \right| \Pi_{n}^{\ast}(dx)  \quad\mathop{\longrightarrow}_{n \rightarrow \infty} \quad 0, \qquad  n^{2} \int_{|x|>1} \left( e^{x}-1 \right)^{2} \Pi_{n}^{\ast}(dx)  \quad\mathop{\longrightarrow}_{n \rightarrow \infty} \quad 0.$$
\end {lem}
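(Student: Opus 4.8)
The plan is to rewrite both integrals as expectations of powers of the increment $\Delta_n = X_n(1)-n$, and then to exploit the moment bound \eqref{eq:zd1} together with the elementary fact that the region $\{|x|>1\}$ forces $|\Delta_n|$ to be of order $n$.

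First I would unfold the definition of $\Pi_n^\ast$. Since $\Pi_n^\ast$ is the law of $\ln(X_n(1)/n)$, an atom located at $x=\ln(k/n)$ corresponds to the event $\{X_n(1)=k\}$, on which $e^x-1 = k/n-1 = \Delta_n/n$. Hence
$$ n^2\int_{|x|>1}|e^x-1|\,\Pi_n^\ast(dx) = n\cdot \Es{\,|\Delta_n|\,\mathbbm{1}_{\{|\ln(X_n(1)/n)|>1\}}}, $$
and similarly
$$ n^2\int_{|x|>1}(e^x-1)^2\,\Pi_n^\ast(dx) = \Es{\,\Delta_n^2\,\mathbbm{1}_{\{|\ln(X_n(1)/n)|>1\}}}. $$
The key observation is that the event $\{|\ln(X_n(1)/n)|>1\}$ forces a large increment. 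Indeed $\ln(X_n(1)/n)>1$ yields $X_n(1)>en$, hence $\Delta_n>(e-1)n$, while $\ln(X_n(1)/n)<-1$ yields $X_n(1)<n/e$, hence $\Delta_n<-(1-e^{-1})n$. In either case $|\Delta_n|>c_0 n$ with $c_0:=1-e^{-1}>0$, so that $\mathbbm{1}_{\{|\ln(X_n(1)/n)|>1\}}\leq \mathbbm{1}_{\{|\Delta_n|>c_0 n\}}$.

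Then I would apply a standard truncation estimate: on $\{|\Delta_n|>c_0 n\}$ and for $1\leq q\leq p$ one has $|\Delta_n|^q \leq (c_0 n)^{q-p}|\Delta_n|^p$, whence, using \eqref{eq:zd1},
$$ \Es{\,|\Delta_n|^q\,\mathbbm{1}_{\{|\Delta_n|>c_0 n\}}} \leq (c_0 n)^{q-p}\,\Es{|\Delta_n|^p} \leq C\,c_0^{q-p}\,n^{q-2-\delta}. $$
Taking $q=1$ bounds the first expectation by $C\,c_0^{1-p}\,n^{-1-\delta}$, so that after multiplication by $n$ the first integral is $O(n^{-\delta})\to 0$ (here $p>2$ guarantees $c_0^{1-p}$ is a finite constant); taking $q=2$ bounds the second integral directly by $C\,c_0^{2-p}\,n^{-\delta}\to 0$.

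The whole argument is a soft consequence of the $p$-th moment control of the increments, and I do not expect a genuine obstacle. The only point needing a moment's care is the purely deterministic observation that $|x|>1$ forces $|\Delta_n|\geq c_0 n$: this is exactly what lets the power of $n$ saved by \eqref{eq:zd1} dominate the prefactors $n$ and $n^2$ appearing in the two integrals.
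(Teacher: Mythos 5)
Your proof is correct and follows essentially the same route as the paper's: rewrite the integrals as moments of $\Delta_n$ restricted to the event $\{|\ln(X_n(1)/n)|>1\}$, observe that this event forces $|\Delta_n|\geq c_0 n$, and then use the truncation bound $|\Delta_n|^q\leq (c_0n)^{q-p}|\Delta_n|^p$ together with \eqref{eq:zd1}. The only cosmetic difference is that the paper first reduces the first convergence to the second and treats only the region $x>1$ explicitly, whereas you handle both integrals and both tails in one uniform computation.
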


\begin {proof}It is enough to establish the second convergence, which implies the first one. We show that $n^{2} \int_{1}^{\infty}(e^{x}-1)^{2} \Pi_{n}^{\ast}(dx) \rightarrow 0$ as $n \rightarrow \infty$ (the case when $x<-1$ is similar, and left to the reader). We write
\begin{eqnarray*}
n^{2} \int_{1}^{\infty}(e^{x}-1)^{2} \Pi_{n}^{\ast}(dx)&=&n^{2} \sum_{k \geq e n} \left(  \frac{k}{n}-1 \right)^{2}  p_{n,k}  =  \sum_{k \geq en} (k-n)^{p} \cdot \frac{1}{(k-n)^{p-2}} p_{n,k} \\
& \leq&  \sum_{k \geq en} (k-n)^{p} \cdot \frac{1}{(e-1)^{p-2} \cdot n^{p-2}} p_{n,k} = \frac{ \Es {\Delta_{n} \mathbbm {1}_{ \{ \Delta_{n} \geq (e-1){n}\} }}}{(e-1)^{p-2}n^{p-2}} \\
& \leq & \frac{C}{(e-1)^{p-2}} \cdot n^{-\delta} \qquad \textrm {by } \eqref{eq:zd1} ,
\end{eqnarray*}
which tends to $0$ as $n \rightarrow \infty$.
\end {proof}

\begin {lem}\label {lem:zd2} We have
$$ \lim_{\epsilon \rightarrow 0} \limsup_{n \rightarrow \infty} n^{2} \int_{|x| < \epsilon} |x|^{3} \cdot \Pi_{n}^{\ast}(dx)  \quad\mathop{=} \quad 0.$$
\end {lem}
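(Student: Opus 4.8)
The plan is to unfold the integral into a sum over the transition probabilities and then transfer the estimate from the logarithmic scale to the linear scale of the increments $\Delta_n = X_n(1)-n$. Recall that $\Pi_n^\ast$ is the law of $\ln(X_n(1)/n)$, so that
\[
n^2 \int_{|x|<\epsilon} |x|^3\,\Pi_n^\ast(dx) = n^2 \sum_{k\,:\,|\ln(k/n)|<\epsilon} |\ln(k/n)|^3\, p_{n,k}.
\]
First I would record the elementary bound $|e^y-1|\ge |y|e^{-|y|}$, valid for every $y\in\R$, which for $|y|\le 1$ gives $|y|\le e\,|e^y-1|$. Applying this with $y=\ln(k/n)$ yields $|\ln(k/n)|^3 \le e^3 |k/n-1|^3 = e^3 |k-n|^3/n^3$ on the range $|\ln(k/n)|<\epsilon\le 1$.

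Next, I would observe that $\{k:|\ln(k/n)|<\epsilon\}=\{n e^{-\epsilon}<k<n e^{\epsilon}\}$ is contained in $\{|k-n|<c_\epsilon n\}$ with $c_\epsilon := e^{\epsilon}-1$, so that, writing $\Delta_n = X_n(1)-n$,
\[
n^2 \sum_{k\,:\,|\ln(k/n)|<\epsilon} |\ln(k/n)|^3\, p_{n,k} \le \frac{e^3}{n}\,\Es{|\Delta_n|^3\,\mathbbm{1}_{\{|\Delta_n|<c_\epsilon n\}}}.
\]
The key step is then a truncation: on the event $\{|\Delta_n|<c_\epsilon n\}$ one has $|\Delta_n|^3\le c_\epsilon n\,|\Delta_n|^2$, whence
\[
\Es{|\Delta_n|^3\,\mathbbm{1}_{\{|\Delta_n|<c_\epsilon n\}}} \le c_\epsilon\, n\,\Es{\Delta_n^2}.
\]
Combining the two displays gives the clean bound $n^2 \int_{|x|<\epsilon}|x|^3\,\Pi_n^\ast(dx) \le e^3 c_\epsilon \,\Es{\Delta_n^2}$.

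Finally, I would conclude by taking limits: since $\Es{\Delta_n^2}=s^2+o(1)$ by \eqref{eq:zd2}, we have $\limsup_{n\to\infty} e^3 c_\epsilon \Es{\Delta_n^2}=e^3 c_\epsilon s^2$, and then $c_\epsilon=e^\epsilon-1\to 0$ as $\epsilon\to 0$ forces the double limit to vanish. I do not expect a genuine obstacle here: the only point requiring a little care is the comparison between the logarithmic coordinate and the linear increment near $x=0$, which is handled uniformly by the bound $|y|\le e\,|e^y-1|$. Note in particular that only the bounded second moment from \eqref{eq:zd2} is needed, while the higher-moment hypothesis \eqref{eq:zd1} (used for Lemma \ref{lem:zd1}) plays no role in this estimate.
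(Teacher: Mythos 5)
Your proof is correct and follows essentially the same route as the paper: both arguments transfer the estimate from the logarithmic coordinate to the linear increment (your bound $|y|\le e\,|e^y-1|$ for $|y|\le 1$ is the same comparison as the paper's $|\ln(1+u)|^3\le C'|u|^3$), and both then use the truncation $|\Delta_n|^3\le c_\epsilon n\,\Delta_n^2$ on the restricted range to reduce everything to the second moment from \eqref{eq:zd2}. The observation that \eqref{eq:zd1} is not needed here also matches the paper's proof.
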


\begin {proof}To simplify notation, we establish the result with $ \epsilon$ replaced by $ \ln(1+ \epsilon)$, with $ \epsilon \in (0,1)$. Write 
$$ \int_{|x| < \ln(1+\epsilon)} |x|^{3} \cdot \Pi_{n}^{\ast}(dx) =  \sum_{ (1+\epsilon)^{-1} \leq k/n \leq 1+ \epsilon}  \left|\ln \left( 1+ \frac{k-n}{n} \right)  \right|^{3} p_{n,k}.$$
But $ (1+\epsilon)^{-1} \leq k/n \leq 1+ \epsilon$ implies that $|k-n|/n \leq  \epsilon$, and there exists a constant $C'>0$ such that $ |\ln(1+x)|^{3} \leq C' |x|^{3}$ for every $|x| \leq 1$. Hence
$$ n^{2} \int_{|x| < \epsilon} |x|^{3} \cdot \Pi_{n}^{\ast}(dx)  \leq   \sum_{ (1+\epsilon)^{-1} \leq k/n \leq 1+ \epsilon}  |k-n|^{2}  \cdot \frac{|k-n|}{n} p_{n,k} \leq  \epsilon  \cdot \Es { \Delta_{n}^{2}},$$
and the result follows by \eqref{eq:zd2}. 
\end  {proof}

We are now in position to establish Proposition \ref {prop:zd}.

\begin {proof}[Proof of Proposition \ref {prop:zd}]
In order to check \textbf{(A1)}, we show that $ n^{2} \cdot \Pi_{n}^{\ast}([ln(a), \infty)) \rightarrow 0$ as $n \rightarrow \infty$ for every fixed $a>1$ (the proof is similar for $a \in (0,1)$) by writing that
$$ (a-1)^{p} n^{p} \sum_{k \geq an}p_{n,k} \leq  \sum_{k \geq an} (k-n)^{p} p_{n,k} \leq  \Es {  |\Delta_{n}|^{p}} \leq  C \cdot n^{p-2- \delta}.$$
Therefore,
$$n^{2} \cdot \Pi_{n}^{\ast}([ln(a), \infty)= n^{2} \sum_{k \geq a n} p_{n,k}  \leq  \frac{C}{(a-1)^{p}} n^{-\delta}\quad\mathop{\longrightarrow}_{n \rightarrow \infty} \quad 0.$$

To prove \textbf{(A2)}, we first show that
\begin{equation}
\label{eq:zdp1}n^{2} \int_{-1}^{1} \left( e^{x}-1-x- \frac{x^{2}}{2} \right)  \Pi_{n}^{\ast}(dx)  \quad\mathop{\longrightarrow}_{n \rightarrow \infty} \quad  0.
\end{equation}
Since there exists a constant $C'>0$ such that $| e^{x}-1-x- {x^{2}}/{2}| \leq C' x^{3}$ for every $|x| \leq 1$, for fixed $ \epsilon >0$, by Lemma \ref {lem:zd2} we may find $\eta>0$ such that
$$n^{2} \int_{-\eta}^{\eta} \left( e^{x}-1-x- \frac{x^{2}}{2} \right)  \Pi_{n}^{\ast}(dx) \leq  \epsilon$$
for every $n$ sufficiently large. But
$$n^{2} \int_{ \eta<|x|< 1} \left( e^{x}-1-x- \frac{x^{2}}{2} \right)  \Pi_{n}^{\ast}(dx)  \quad\mathop{\longrightarrow}_{n \rightarrow \infty} \quad 0$$
by the first paragraph of the proof. This establishes \eqref{eq:zdp1}. One similarly shows that
\begin{equation}
\label{eq:zdp2}n^{2} \int_{-1}^{1}  \left( (e^{x}-1)^{2}-x^{2} \right)  \Pi_{n}^{\ast}(dx)  \quad\mathop{\longrightarrow}_{n \rightarrow \infty} \quad 0.
\end{equation}

  Next observe that
$$n^{2} \int_{-\infty}^{\infty}(e^{x}-1) \Pi_{n}^{\ast}(dx)=n \Es { \Delta_{n}}\quad \hbox{and} \quad n^{2} \int_{-\infty}^{\infty}(e^{x}-1)^2 \Pi_{n}^{\ast}(dx)= \Es { \Delta^2_{n}}.$$
Thus, by Lemma \ref {lem:zd1} and \eqref{eq:zd2}, we have 
$$n^{2} \int_{-1}^{1}(e^{x}-1) \Pi_{n}^{\ast}(dx)  \quad\mathop{\longrightarrow}_{n \rightarrow \infty} \quad c, \qquad n^{2} \int_{-1}^{1}(e^{x}-1)^{2} \Pi_{n}^{\ast}(dx)  \quad\mathop{\longrightarrow}_{n \rightarrow \infty} \quad s^{2}.$$
Then write 
$$n^{2} \int_{-1}^{1}(e^{x}-1) \Pi_{n}^{\ast}(dx)=n^{2} \int_{-1}^{1}x\Pi_{n}^{\ast}(dx)+n^{2} \int_{-1}^{1}\frac{x^{2}}{2} \Pi_{n}^{\ast}(dx)+ n^{2} \int_{-1}^{1} \left( e^{x}-1-x- \frac{x^{2}}{2} \right)  \Pi_{n}^{\ast}(dx)$$
and
$$n^{2} \int_{-1}^{1}(e^{x}-1)^{2} \Pi_{n}^{\ast}(dx)= n^{2} \int_{-1}^{1} x^{2} \Pi_{n}^{\ast}(dx)+  n^{2} \int_{-1}^{1}  \left( (e^{x}-1)^{2}-x^{2} \right)  \Pi_{n}^{\ast}(dx).$$
By \eqref{eq:zdp1} and \eqref{eq:zdp2}, the last term of the right-hand side of the two previous equalities tends to $0$ as $ n \rightarrow \infty$. It follows that
$$ n^{2} \cdot  \int_{-1}^{1}x \ \Pi_{n}^{\ast}(dx)  \quad\mathop{\longrightarrow}_{n \rightarrow \infty} \quad b, \qquad n^{2} \cdot \int_{-1}^{1}x^{2} \ \Pi_{n}^{\ast}(dx)  \quad\mathop{\longrightarrow}_{n \rightarrow \infty} \quad \sigma^{2},$$
  where $b$ and $ \sigma^{2}$ satisfy
  $$c=b+ \frac{ \sigma^{2}}{2}, \qquad s^{2}= \sigma^{2}.$$
  This shows that \textbf{(A2)} holds.
  
In order to establish that \textbf{(A3)} holds for every $ \beta_{0} \in [0,p]$, first note that the constraint on $ \beta_{0}$ yields the existence of  a constant $C'>0$ such that $k^{\beta_{0}}/(k-n)^{p} \leq C' n^{ \beta_{0}-p}$ for every $k \geq en$ and $n \geq 1$. Then write
\begin{eqnarray*}
n^{2} \cdot \int_{1}^{\infty}  e^{\beta_{0} x }\ \Pi^{\ast}_{n}(dx) = n^{2-\beta_{0}} \sum_{k \geq  e n}k^{\beta_{0}} p_{n,k} &=& n^{2-\beta_{0}} \sum_{k \geq  e n}(k-n)^{p} \frac{k^{\beta_{0}}}{(k-n)^{p}} p_{n,k} \\
&\leq&  C' n^{2-p}  \sum_{k \geq  e n}(k-n)^{p} p_{n,k} \leq CC'n^{-\delta}.
\end{eqnarray*}
This shows that \textbf{(A3)} holds.

Finally, for the last assertion of Proposition \ref{prop:zd} observe that $ \Psi(\lambda)= \frac{1}{2} s^{2} \lambda^{2}+ \frac{2c-s^{2}}{2} \lambda$, so that $ \Psi(2)=2c+s^{2}$ and $\Psi(1+r)=0$. In particular, if $2c+s^{2}<0$, one may find $ \beta_{0} \in (2, 1+r)$ such that $ \Psi(\beta_{0})<0$.  This shows \textbf{(A4)}. Finally, for \textbf{(A5)}, note that $\Es{|X_{n}(1)-n|^{\beta_{0}}}=\Es{|\Delta_{n}|^{\beta_{0}}} \leq \Es{|\Delta_{n}|^{p}}<\infty$ implies that $\Es{X_{n}(1)^{\beta_{0}}}<\infty$. This completes the proof.\end {proof}

\begin {rem}The results of \cite {HMW13} establish many estimates concerning  various statistics of excursions of $X_{1}$ from $1$ (such as the duration of the excursion, its maximum, etc.). Unfortunately, those estimates are not enough to establish directly \eqref{eq:mqthm3bis},\eqref{eq:thm52} and \eqref{eq:thm53}.  However, only in the particular case of Bessel-type random walks, it is possible to use the local estimates of \cite {Ale11} in order to establish \eqref{eq:mqthm3bis}, \eqref{eq:thm52} and \eqref{eq:thm53}  directly. 
\end {rem}

\subsection{The number of fragments in a fragmentation-coagulation process}
Exchangeable fragmentation-coalescence processes were introduced by J.~Berestycki \cite{Ber04}, as Markovian models whose evolution combines the dynamics of exchangeable coalescent processes and those of homogeneous fragmentations. The fragmentation-coagulation process that we shall consider in this Section can be viewed as a special case in this family.

Imagine a particle system in which particles may split or coagulate as time passes.
For the sake of simplicity, we shall focus on the case when coalescent events are simple, that is the coalescent dynamics is that of a  $\Lambda$-coalescent in the sense of Pitman \cite{Pit99}. Specifically, $\Lambda$ is a finite measure on $[0,1]$; we shall implicitly assume that $\Lambda$ has no atom at $0$, viz.~$\Lambda(\{0\})=0$.
In turn, we suppose that the fragmentation dynamics are homogeneous (i.e.~independent of the masses of the particles) and governed by a finite dislocation measure which  only charges mass-partitions having a finite number (at least two) of components. That is, almost-surely, when a dislocation occurs, the particle which splits is replaced by a finite number of smaller particles. 

The process $\#_n=(\#_n(t);  t\geq 0)$ which counts the number of particles as time passes, when the process starts at time $t=0$ with $n$ particles,  is a continuous-time Markov chain with values in $\N$. 
More precisely, the rate at which $\#_n$ jumps from $n$ to $k<n$ as the result of a simple coagulation event involving $n-k+1$ particles is given by
$$g_{n,k}=\int_{(0,1]} 
{n \choose k-1}
 x^{n-k-1}(1-x)^{k-1}\Lambda(dx).$$
We also write 
$$g_n= \sum_{k=1}^{n-1} g_{n,k}= \int_{(0,1]}\left(1-(1-x)^n-n x (1-x)^{n-1}\right) x^{-2}Ê\Lambda(dx)$$
for the total rate of coalescence.
In turn, let $\mu$ denote a finite measure on $\N$, such that the rate at which each particle splits into $j+1$ particles (whence inducing an increase of $j$ units for the number of particles) when a  dislocation event occurs, is given by $\mu(j)$ for every $j\in\N$.

We are interested in  the jump chain $X_n=(X_n(k) ; k\geq 0)$ of $\#_n$, that is the discrete-time embedded Markov chain of the successive values taken by $\#_n$. The transition probabilities $p_{n,k}$ of $X_n$ are thus given by
$$p_{n,k}=  \left\{ \begin{matrix} n\mu(k-n)/(g_n+n\mu(\N)) & \hbox{ for } &k>n,\\
  g_{n,k}/(g_n+n\mu(\N)) & \hbox{ for }& k<n.
 \end{matrix} \right.$$
 We assume from now on that the measure $\mu$ has a finite mean 
$$m\coloneqq\sum_{j=1}^{\infty} j \mu(j)<\infty$$
and further that
$$\int_{(0,1]} x^{-1} \Lambda(dx)<\infty\,.$$
  Before  stating our main result about the scaling limit of the chain $X_n$, it is convenient 
 introduce the measure $\Pi(dy)$ on $(-\infty,0)$  induced by the image of $x^{-2}\Lambda(dx)$ by the map $x\mapsto y = \ln(1-x)$ and observe that 
 $$\int_{(-\infty,0)} (1\wedge |y|) \ \Pi(dy)<\infty.$$
We may thus consider  the spectrally negative L\'evy process $\xi=(\xi(t), t\geq 0)$ whose Laplace transform given by
\begin{eqnarray*}
\E {\exp(q\xi(t))} &=&\exp\left( \frac{t}{\mu(\N)}\left (mq + \int_{(-\infty,0)}(e^{qy}-1) \ \Pi(dy)\right)\right )  \\
& =& \exp\left( \frac{t}{\mu(\N)}\left (mq + \int_{(0,1)}((1-x)^{q}-1) \cdot x^{-2}\Lambda(dx)\right)\right ) .
\end{eqnarray*}
We point out that $\xi$ has finite variations, more precisely it is the sum of the negative of a subordinator and  a positive drift, and also that $\xi$ drifts to $+\infty$, oscillates, or drifts to $-\infty$ according as the mean
$$\Es{\xi_1}=m+\int_{(-\infty,0)}y \ \Pi(dy) = m+\int_{0}^{1} \frac{ \ln(1-x)}{x^{2}} \ \Lambda(dx)$$
is respectively strictly positive, zero, or strictly negative (possibly $-\infty$).

\begin{cor} \label{cor:C2}  Let $ (Y(t), t\geq 0)$ denote the positive self-similar Markov process with index $1$, which is associated via Lamperti's transform to the spectrally negative L\'evy process $\xi$. 
\begin{enumerate}
\item[(i)] 
If $\xi$ drifts to $+\infty$ or oscillates, then there is the weak convergence  in $\D(\R_{+},\R)$
$$\left(  \frac{X_n(\lfloor nt\rfloor)}{n}; t\geq 0\right) \quad\mathop{\longrightarrow}^{(d)}_{n \rightarrow \infty} \quad (Y(t);  t\geq 0).$$

\item[(ii)]  If $\xi$ drifts to $-\infty$, then $A^{(1)}_n=\inf\{k\geq 1: X_n(k)=1\}$ is a.s.~finite for all $n\geq 1$, 
$$ \frac{A^{(1)}_n}{n} \quad\mathop{\longrightarrow}^{(d)}_{n \rightarrow \infty} \quad \int_0^{\infty}Êe^{\xi(s)} ds\,,$$
and this weak convergence holds jointly with
$$\left(  \frac{X_n \left( \lfloor nt\rfloor \wedge A^{(1)}_n \right) }{n} ; t\geq 0\right) \quad\mathop{\longrightarrow}^{(d)}_{n \rightarrow \infty} \quad (Y(t) ; t\geq 0)$$
in $\D(\R_{+},\R)$. 

\item[(iii)]  If $m < \int_{(-\infty,0)}(1-e^{y})\ \Pi(dy) = \int_{0}^{1} x^{-1} \Lambda(dx)$ 
and $\sum_{j=1}^{\infty} j^{\beta} \mu(j)<\infty$ for some $\beta>1$, then $\xi$ drifts to $-\infty$ and 
$$\left( \frac{X_n(\lfloor nt\rfloor)}{n}; t\geq 0\right) \quad\mathop{\longrightarrow}^{(d)}_{n \rightarrow \infty} \quad (Y(t); t\geq 0)$$
in $\D(\R_{+},\R)$. 
In addition, for every $1 \leq p < \beta$ such that $m < \int_{(0,1)}(1-(1-x)^{p})/p \cdot x^{-2}\Lambda(dx)$, we have
$$ \Es{ \left( \frac{A^{(1)}_n}{n} \right) ^{p}} \quad\mathop{\longrightarrow}^{(d)}_{n \rightarrow \infty} \quad  \Es{ \left( \int_0^{\infty}Êe^{\xi(s)} ds \right)^{p}}.$$
 \end{enumerate}
\end{cor}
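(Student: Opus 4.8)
The plan is to read off Corollary~\ref{cor:C2} from Theorems~\ref{thm:main1}, \ref{thm:main2}, \ref{thm:main3} and \ref{thm:absorption}, applied with the scaling sequence $a_n=n$ (so $\gamma=1$). The whole task thus reduces to checking \textbf{(A1)}, \textbf{(A2)} in all cases, and \textbf{(A3)}, \textbf{(A4)}, \textbf{(A5)} in the recurrent/positive-recurrent cases, and then matching the drift behaviour of $\xi$ with the trichotomy (i)--(iii). First I would record the asymptotics of the jump rates. Writing $\phi_n(x)=1-(1-x)^{n-1}(1+(n-1)x)$, an elementary monotonicity argument (the derivative of $x\mapsto(1-x)^{n-1}(1+(n-1)x)+nx-1$ is $n[1-(n-1)x(1-x)^{n-2}]>0$) gives $\phi_n(x)\leq nx$ on $[0,1]$, whence $\phi_n(x)x^{-2}/n\leq x^{-1}$; since $\int x^{-1}\Lambda(dx)<\infty$ and $\phi_n(x)x^{-2}/n\to0$ pointwise, dominated convergence yields $g_n=o(n)$. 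Consequently the total jump rate satisfies $g_n+n\mu(\N)\sim n\mu(\N)$, which is exactly what singles out $a_n=n$ and produces the time-normalisation $1/\mu(\N)$ in $\Psi$.

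Next I would verify \textbf{(A1)} by splitting $\Pi_n^\ast$ into its coagulation part ($k<n$) and fragmentation part ($k>n$). A split of size $j$ yields a log-increment $\ln(1+j/n)\to0$, so the fragmentation part concentrates at $0$: its mass above any $\delta>0$ is $\tfrac{n}{\mu(\N)}(1+o(1))\sum_{j>(e^\delta-1)n}\mu(j)$, which tends to $0$ because $\sum_{j>cn}\mu(j)\leq(cn)^{-1}\sum_{j>cn}j\mu(j)=o(1/n)$ by $m<\infty$. For the coagulation part, insert $g_{n,k}=\binom{n}{k-1}\int_{(0,1]}x^{n-k-1}(1-x)^{k-1}\Lambda(dx)$ and recognise, for a test function $f$ with compact support in $(-\infty,0)$, the inner sum over $k$ as $\Es{f(\ln((S_n+1)/n))}$ with $S_n\sim\mathrm{Bin}(n,1-x)$; the law of large numbers gives the pointwise limit $f(\ln(1-x))$, so $\sum_{k<n}g_{n,k}f(\ln(k/n))\to\int_{(0,1]}x^{-2}f(\ln(1-x))\Lambda(dx)=\int f\,d\Pi$, and dividing by the total rate produces $\Pi/\mu(\N)$. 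The delicate point --- which I expect to be the \textbf{main obstacle} --- is the domination near $x=0$, where $x^{-2}$ need not be $\Lambda$-integrable: a macroscopic down-step out of a coagulation with small parameter $x$ requires an atypically large binomial deviation, so one fixes a small $\theta$, applies dominated convergence with bound $\theta^{-2}\|f\|_\infty$ on $\{x\geq\theta\}$, and on $(0,\theta)$ uses the crude tail estimate $\Pr{S_n+1\leq e^{-\eta}n}\leq 2^n x^{(1-e^{-\eta})n}$ with the choice $2\theta^{1-e^{-\eta}}<1$ to make that region negligible.

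For \textbf{(A2)} (with $\sigma^2=0$) I would compute the truncated moments directly. The coagulation part converges to $\mu(\N)^{-1}\int_{-1}^0 y\,\Pi(dy)$ and $\mu(\N)^{-1}\int_{-1}^0 y^2\,\Pi(dy)$ by the same concentration argument. The fragmentation part contributes $m/\mu(\N)$ to $b$, via $n\sum_j\mu(j)\ln(1+j/n)\to m$ by dominated convergence ($n\ln(1+j/n)\leq j$, $\sum_j j\mu(j)=m<\infty$), and nothing to the second moment: splitting at $j=\epsilon n$ and bounding $(\ln(1+j/n))^2$ by $(j/n)^2$ for $j\leq\epsilon n$ (contribution $\leq\epsilon m/\mu(\N)$) and by $1$ for $j>\epsilon n$ (contribution $o(1)$ by $m<\infty$) shows it is $o(1)$. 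Equivalently, one may check \textbf{(A1)}+\textbf{(A2)} at once by showing $\Phi_n(\lambda)=n\,\Es{(X_n(1)/n)^{i\lambda}-1}\to\Phi(\lambda)$; here the identity $\Es{1/(S_n+1)}=(1-x^{n+1})/((n+1)(1-x))$ together with $-\ln z\leq 1/z-1$ gives $\Es{|((S_n+1)/n)^{i\lambda}-1|}\leq|\lambda|\,x/(1-x)$, which tames the singularity cleanly since the test increment vanishes linearly at $x=0$.

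It then remains to run the trichotomy. The sign of $\E{\xi_1}=m+\int y\,\Pi(dy)$ (up to the positive factor $1/\mu(\N)$) decides whether $\xi$ drifts to $+\infty$, oscillates, or drifts to $-\infty$, as recalled before the statement. For (i), $\xi$ does not drift to $-\infty$ and Theorem~\ref{thm:main1} applies verbatim. For (ii), $\xi$ drifts to $-\infty$; I would verify \textbf{(A3)} with any $\beta\in(0,1)$, since $n\int_1^\infty e^{\beta x}\Pi_n^\ast(dx)\lesssim \mu(\N)^{-1}n^{1-\beta}\sum_{j\geq(e-1)n}j^\beta\mu(j)=O(1)$ by the bound $j^\beta\leq((e-1)n)^{\beta-1}j$ and $m<\infty$, take $K=1$ (accessible because $g_{n,1}=\int_{(0,1]}x^{n-2}\Lambda(dx)>0$), and invoke Theorem~\ref{thm:main2} and Theorem~\ref{thm:absorption}(i), with $\int_0^\infty e^{\gamma\xi(s)}ds=\int_0^\infty e^{\xi(s)}ds$ since $\gamma=1$. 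Finally, for (iii) the hypothesis $m<\int_{(0,1)}x^{-1}\Lambda(dx)$ is precisely $\Psi(1)=\Psi(\gamma)<0$, so $\xi$ drifts to $-\infty$; convexity and continuity of $\Psi$ (finite on $[0,\infty)$ as $\Pi$ is carried by the negative half-line) give $\beta_0\in(1,\beta]$ with $\Psi(\beta_0)<0$, and $\sum_j j^\beta\mu(j)<\infty$ yields \textbf{(A4)} (now $n^{1-\beta_0}\sum_j j^{\beta_0}\mu(j)\to0$) and \textbf{(A5)} ($\Es{X_n(1)^{\beta_0}}<\infty$ reduces to $\sum_j j^{\beta_0}\mu(j)<\infty$). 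Theorem~\ref{thm:main3} then gives the non-stopped convergence, and the $L^p$-convergence of $A_n^{(1)}/n$ follows from Remark~\ref{rem:moments} together with the $K$-versus-$M$ reduction in the proof of Theorem~\ref{thm:absorption}, the admissible range being recovered by choosing, for each such $p$, a $\beta_0>p$ with $\Psi(\beta_0)<0$, i.e. the stated condition $\Psi(p)<0$.
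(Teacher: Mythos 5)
Your proposal is correct and follows the same architecture as the paper's proof: identify $a_n=n$ (hence $\gamma=1$) from $g_n+n\mu(\N)\sim n\mu(\N)$, verify \textbf{(A1)}--\textbf{(A2)} with $\sigma^2=0$ and the limiting triplet matching the stated $\Psi$, check \textbf{(A3)} for case (ii) and \textbf{(A4)}--\textbf{(A5)} for case (iii) using $\sum_j j^{\beta}\mu(j)<\infty$, observe that the hypothesis of (iii) is exactly $\Psi(1)<0$, and then invoke Theorems \ref{thm:main1}, \ref{thm:main2}, \ref{thm:main3}, \ref{thm:absorption} and Remark \ref{rem:moments}. The $g_n=o(n)$ argument, the verification of \textbf{(A3)}/\textbf{(A4)} by comparing $n\sum_{k>en}(k/n)^{\beta}p_{n,k}$ with $n^{1-\beta}\sum_j j^{\beta}\mu(j)$, and the bookkeeping for the admissible range of $p$ all coincide with what the paper does. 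The one genuine point of divergence is the coagulation part of \textbf{(A1)}--\textbf{(A2)}: the paper simply imports Lemma 9 of Haas \& Miermont \cite{HM11}, which asserts $\sum_{k<n}(f(k/n)-f(1))g_{n,k}\to\int_{(0,1]}(f(1-x)-f(1))x^{-2}\Lambda(dx)$, whereas you reprove it from scratch by writing $g_{n,k}=\int x^{-2}\,\Pr{S_n=k-1}\,\Lambda(dx)$ for $S_n\sim\mathrm{Bin}(n,1-x)$ and combining the law of large numbers with a crude binomial tail bound to tame the non-integrable $x^{-2}$ singularity near $x=0$. You correctly flag this as the only delicate step, and your treatment of it is sound; what the citation buys the paper is brevity, while your version makes the corollary self-contained. (Minor cosmetic differences: you check \textbf{(A3)} with $\beta\in(0,1)$ where the paper uses $\beta=1$ in \eqref{eq:adapt}, and you make explicit the accessibility of $K=1$ via $g_{n,1}=\int x^{n-2}\Lambda(dx)>0$, which the paper leaves implicit; neither affects correctness.)
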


\begin{proof}  We first note that, since $\mu$ as finite mean $m$, 
$$\lim_{n\to\infty} n\sum_{k=n}^{\infty}\left(f(k/n)-f(1)\right) \mu(k-n)
= mf'(1)$$
for every bounded function $f: \R_+\to \R$  that is differentiable at $1$.
We also  lift from Lemma 9 from Haas \& Miermont \cite{HM11}  that in this situation
$$\lim_{n\to\infty} \sum_{k=1}^{n-1}\left(f(k/n)-f(1)\right) g_{n,k}
= \int_{(0,1]}\left(f(1-x)-f(1)\right) x^{-2} \ \Lambda(dx).
$$

Then we  observe that there is the identity
$$\frac{g_n}{n}=  \int_{(0,1]}n^{-1}\left(\sum_{j=0}^{n-2}((1-x)^j-(1-x)^{n-1})\right) x^{-1} \Lambda(dx).$$
It follows readily by dominated convergence from our assumption $\int_{(0,1]} x^{-1} \Lambda(dx)<\infty$ that $g_n=o(n)$, and therefore $g_n+n\mu(\N)\sim n\mu(\N)$ as $n \rightarrow \infty$.
Hence 
$$\lim_{n\to\infty} n \mu(\N) \sum_{k=1}^{\infty} (f(k/n)-f(1))p_{n,k} = m f'(1)+ \int_{(0,1]}\left(f(1-x)-f(1)\right) \ x^{-2}\Lambda(dx),
$$
and for every bounded function $h:\R\to \R$ which is differentiable at $0$, we therefore have 
\begin{eqnarray*}
\lim_{n\to\infty} n\int_{\R}\left(h(x)-h(0)\right) \ \Pi^\ast_n(dx) &=& 
\lim_{n\to\infty} n  \sum_{k=1}^{\infty} (h(\ln(k/n))-h(0))p_{n,k}\\
&=&\frac{1}{\mu(\N)}\left(m h'(0)+ \int_{(0,1]}\left(h(\ln (1-x))-h(0)\right) x^{-2}\Lambda(dx)\right )\\
&=&\frac{1}{\mu(\N)}\left(m h'(0)+ \int_{(-\infty,0)}\left(h(y))-h(0)\right) \Pi(d y)\right )\\
\end{eqnarray*}
where $\Pi(dy)$ stands for the image of $x^{-2}\Lambda(dx)$ by the map $x\mapsto y = \ln(1-x)$.  This proves that the assumptions \textbf{(A1)} and \textbf{(A2)} hold (with $ \Pi/ \mu(\N)$ instead of $ \Pi$ to be precise), and then (i) follows from Theorem \ref{thm:main1}. 
Note also that 
\begin{equation}
\label{eq:adapt}n \cdot \int_{1}^{\infty }e^{\beta x} \ \Pi^\ast_n(dx) =
 n \cdot  \sum_{k>e n }  \frac{k}{n}  \cdot p_{n,k} \leq  \frac{1}{ \mu(\N)} \sum_{k>(1-e)n}  (1+k) \mu(k) \leq (1+ m)/ \mu(\N),
\end{equation}
which shows that \textbf{(A3)} is fulfilled. Hence (ii) follows from Theorems \ref{thm:main2} and \ref{thm:absorption} (i).

Finally, it is easy to check that when the assumptions of (iii) are fulfilled, then \textbf{(A4)} and  \textbf{(A5)} hold. Indeed, as for \eqref{eq:adapt}, for every $ \beta>0$ we have
$$n \cdot \int_{1}^{\infty }e^{\beta x} \ \Pi^\ast_n(dx) =
 n \cdot  \sum_{k>e n } \left( \frac{k}{n} \right) ^{\beta}p_{n,k} \leq  \frac{n^{1-\beta}}{ \mu(\N)} \sum_{k>(1-e)n}  k^{\beta} \mu(k),$$
and we can thus invoke Theorem \ref{thm:main3}, as well as Theorem \ref {thm:absorption} (ii) and Remark \ref {rem:moments}.
\end{proof}

Roughly speaking, Corollary \ref {cor:C2} tells us that in case (i), the number of blocks drifts to $+ \infty$ and in case (iii), once the number of blocks is of order $o(n)$, it will remain of order $o(n)$ on time scales of order $a_{n}$. In case (ii), we are only able to understand what happens until the moment when there is only one block. It is plausible that in some cases, the process counting the number of blocks may then ``restart'' (see Section \ref {sec:questions} for a similar discussion).

\section {Open questions}
\label {sec:questions}

Here we gather some open questions.

\begin {ques}\label {q:1}Is it true that Theorem \ref {thm:main2} remains valid if \textbf{(A3)} is replaced with the  condition $ \inf \{ i \geq 1; X_{n}(i) \leq K \}< \infty$ almost surely for every $n \geq 1$?
\end {ques}

\begin {ques}\label {q:2}Is it true that Theorem \ref {thm:main3} remains valid if \textbf{(A4)} is replaced with the  condition that $ \Es{ \inf \{ i \geq 1; X_{n}(i) \leq K \}}< \infty$  for every $n \geq 1$?
\end {ques}

It seems that answering Questions \ref {q:1} and \ref {q:2} would require new techniques which are not based on Foster--Lyapounov type estimates. Unfortunately, up to now, even in the case of Markov chains with asymptotically zero drifts, all refined analysis is based on such estimates.

A first step would be to answer these questions in the particular case Markov chains with asymptotically zero drifts; recalling the notation of Section \ref {sec:zd}:

\begin {ques}\label {q:3}Consider a Markov chain with asymptotically zero drifts satisfying \eqref{eq:zd2} only. Under what conditions do we have
$$\left( \frac{ {X} _{n}( \lfloor n^{2} t \rfloor)}{n} ; t \geq 0 \right)   \quad\mathop{\longrightarrow}^{(d)}_{n \rightarrow \infty} \quad s R_{1/s}^{(\nu)} \quad ?$$
\end {ques} 
When  in addition the assumption \eqref{eq:zd1} is  satisfied, our results settle the cases $r \leq -1$ and $r>1$. Also, as it was already mentioned, using moment methods, Lamperti \cite[Theorem 5.1]{Lam62b} settles the case $r \leq 1$ under the assumptions  that  $\sup_{n \geq 1} \Es {|\Delta_{n}|^{4}}< \infty$  and that Markov chain is uniformly null (see \cite {Lam62b} for a definition). We mention that if $X_{n}$ is irreducible and not positive recurrent (which is the case when $r<1$), then it is uniformly null. 

However, in general, the asymptotic behavior of $X_{n}$ will be very sensitive to the laws of $X_{k}(1)$ for small values of $k$. For example, even in the Bessel-like random walk case, one drastically changes the behavior of $X_{n}$ just by changing the distribution of $X_{1}(1)$ in such a way that $ \Es {X_{1}(1)^{2}}= \infty$. More generally:

\begin {ques}\label {q:4}Assume that \textbf{(A1)} and \textbf{(A2)} hold, and that there exists an integer $1 \leq n \leq K$ such that  $ \Es{ \inf \{ i \geq 1; X_{n}(i) \leq K \}}= \infty$. Under what conditions on the probability distributions $X_{1}(1),X_{2}(1), \ldots,$ $X_{K}(1)$ does the Markov chain $X_{n}$ have a continuous scaling limit (in which case $0$ is a continuously reflecting boundary)? A discontinuous c\`adl\`ag scaling limit (in which case $0$ is a discontinuously reflecting boundary)?
\end {ques}

As a first step, one could first try to answer this question under the assumptions \textbf{(A3)} or \textbf{(A4)} which enable the use of Foster--Lyapounov type techniques. 
We intend  to develop this in a future work.

\begin {ques}Assume that $ \xi$ does not drift to $- \infty$, and that if $ \P_{x}$ denotes the law of $Y$ started from $x>0$, then $ \P_{x}$ converges weakly as $x \downarrow 0$ to a probability distribution denoted by $ \P_{0}$. Does there exist a family $(p_{n,k})$ such that the law of $Y$ under $ \P_{0}$ is the scaling limit of $X_{n}$ as $n \rightarrow \infty$? If so, can one find sufficient conditions guaranteeing this distributional convergence?
\end {ques}

\begin {ques}Assume that  $\xi$ drifts to $-\infty$, so that $Y$ is absorbed at $0$. Assume that $Y$ has a recurrent extension at $0$. Does there exist a family $(p_{n,k})$ such that this recurrent extension is the scaling limit of $X_{n}$ as $n \rightarrow \infty$?  If so, can one find sufficient conditions guaranteeing this distributional convergence?
\end {ques}


\end{document}